\numberwithin{equation}{section} \theoremstyle{plain}
\newtheorem{theorem}{Theorem}[section]
\newtheorem{lemma}{Lemma}[section]
\newtheorem{corollary}{Corollary}[section]
\newtheorem{proposition}{Proposition}[section]
\def\blue{}
\begin{document}

\newcommand{\cov}{\text{Cov}}
\newcommand{\var}{\text{Var}}
\newcommand\tr{\mathop{tr}}
\newcommand\E{\mathbb{E}}
\newcommand\N{\mathcal{N}}
\newcommand\EN{\EuScript{N}}
\newcommand\lb{\left(}
\newcommand\rb{\right)}
\newcommand\veps{\varepsilon}
\newcommand\norm[1]{\left\lVert#1\right\rVert}
\newcommand\x{\mathbf{x}}
\newcommand\y{\mathbf{y}}
\newcommand\z{\mathbf{z}}
\renewcommand\a{\mathbf{a}}
\renewcommand{\d}[1]{\ensuremath{\operatorname{d}\!{#1}}}

\newcommand\gai[1]{{#1}}
\newcommand\ggai[1]{#1}

\def\cL{\mathcal{L}}
\def\cM{\mathcal{M}}
\def\X{\mathbf{X}}
\def\p{\mathbf{p}}
\def\wh{\widehat}
\def\wt{\widetilde}
\def\P{\mathbf{P}}
\def\D{\mathbf{D}}
\def\Q{\mathbf{Q}}
\def\Tr{\mathrm{Tr}}
\def\diag{\text{diag}}
\begin{frontmatter}

  \title{On testing for high-dimensional white noise}
  \runtitle{On testing a  high-dimensional white noise}

  \begin{aug}
    \author{\fnms{~ Zeng} \snm{Li}\ead[label=e1]{zxl278@psu.edu}}
    \footnote{Li's  research was supported by was supported by NIDA, NIH grants
      P50 DA039838,  a NSF grant DMS 1512422 and National Nature Science Foundation of
      China (NNSFC), 11690015.}
    \and
    \author{\fnms{~ Clifford} \snm{Lam~}\ead[label=e3]{c.lam2@lse.ac.uk}}
    \and
    \author{\fnms{~ Jianfeng} \snm{Yao~}\ead[label=e2]{jeffyao@hku.hk}}
    \and
    \author{\fnms{~ Qiwei} \snm{Yao}\ead[label=e4]{q.yao@lse.ac.uk}}
    
    \affiliation{London School of Economics, Pennsylvania State University, The University of Hong Kong}
    \runauthor{Z. Li, C. Lam,  J. Yao \& Q. Yao}

    \address{ Department of Statistics\\
      Pennsylvania State University\\
      \printead{e1}}
    
    \address{ 
      Department of Statistics and Actuarial Science\\
      The University of Hong Kong\\
      \printead{e2}
    }

    \address{
      Department of Statistics \\
      London School of Economics and Political Science\\
      \printead{e3,e4}
    }
  \end{aug}

\begin{abstract}
Testing for white noise is a classical yet important problem in
statistics, especially for diagnostic checks in time series modeling and linear 
regression.
For high-dimensional time series in the sense that the dimension $p$ is
large in relation to the sample size $T$, 
the popular omnibus tests
including the multivariate Hosking and Li-McLeod tests are extremely
conservative, leading to substantial power loss.
To develop more relevant tests for high-dimensional cases,
we propose a portmanteau-type test statistic which is the sum of squared singular values of
the first $q$ lagged sample autocovariance matrices. It, therefore,
encapsulates all
the serial correlations (upto the time lag $q$) within and across all
component series.
Using the tools from random matrix theory and assuming   both $p$ and 
$T$ diverge to infinity, 
we derive the asymptotic normality of  the test statistic  under both
the null and a specific  VMA(1) alternative hypothesis.
As the  actual implementation of the test requires the knowledge of three
characteristic constants of the population cross-sectional covariance
matrix and the value of the fourth moment of the standardized
innovations, non trivial estimations are proposed for these parameters
and their integration leads to a practically usable test. 
Extensive simulation confirms the
excellent finite-sample performance of the new test 
with accurate size and satisfactory power for 
a large  range of
finite $(p,T)$ combinations, 
therefore ensuring wide applicability 
in practice. 
In particular, the
new tests are consistently superior to the traditional Hosking
and Li-McLeod tests.
\end{abstract}
\begin{keyword}[class=AMS]
     \kwd[Primary ]{62M10,~62H15}
     \kwd[; secondary ]{15A52}
   \end{keyword}

   \begin{keyword}
     \kwd{large autocovariance matrix}
     \kwd{Hosking test}
     \kwd{Li-McLeod test}
     \kwd{high-dimensional time series}
     \kwd{random matrix theory}
   \end{keyword}
 \end{frontmatter}

\section{Introduction}

Testing for white noise is an important problem in
statistics. It is indispensable in diagnostic checking for linear regression and linear time series modeling in particular.
The surge of recent interests in modeling high-dimensional time series 
adds a further challenge: diagnostic checking
demands the testing for high-dimensional white noise in the sense that
the dimension of time series is comparable to or
even greater than the sample size (i.e.,
the observed length of the time series).
One prominent example showing the need for diagnostic checking in
high-dimensional time series concerns the vector autoregressive model,
which has a large literature.
When the dimension is large, most existing works regularize the fitted models by Lasso \citep{HHC2008, HNMK2009, SM2010,
BM2015},
Dantzig penalization \citep{HL2013}, banded autocovariances \citep{bickel2011},
or banded auto-coefficient matrices \citep{GuoWangYao16}. However, none of them have developed any residual-based diagnostic tools. Another popular approach is to represent high-dimensional time series
by lower-dimensional factors. See for example,
\cite{SW89,SW98,SW99},  \cite{Fornietal_2000,Fornietal_2005}, \cite{BaiNg_Econometrica_2002}, \cite{LamYao_AOS_2012} and
\cite{ChangGuoYao_2015}.
Again, there is a pertinent need to develop appropriate tools for checking the validity of
the fitted factor models through careful examination of the
residuals.

There are several well-established  white noise tests for univariate time series \citep{Li03}.
Some of them have been extended for testing vector time series
\citep{Hosking80,Li81,Lutkepohl_2005}.
However, these methods are designed for the cases where the
dimension of the time series is small or relatively small compared to the sample size.
For the purpose of model diagnostic checking, the so-called omnibus tests are
often adopted which are designed to detect any forms of departure from white noise.
The celebrated Box-Pierce portmanteau test and its variations are the most popular omnibus
tests. The fact that the Box-Pierce test and its variations are asymptotically
distribution-free and $\chi^2$-distributed under the null hypothesis
makes them particularly easy to use in practice. However, it is well
known in the literature that the slow convergence to
their asymptotic null distributions is particularly pronounced in
multivariate cases. On the other hand, testing for high-dimensional time
series is still in an infancy stage. To our best knowledge, the only
available methods are \cite{CYZ17}  and \cite{Tsay17}.

To appreciate the challenge in testing for a high-dimensional white
noise, 
we refer to an  example reported in Section 3.1 below where say, 
we have to check the residuals from a fitted  multivariate volatility for a portfolio containing $p=50$
stocks using their daily returns over a period of one semester. The
length of the returns time series is then approximately
$T=100$. Table~\ref{Tab:cLSizeCom}
shows that the two variants of the multivariate portmanteau test,
namely the Hosking and Li-McLeod tests, all have actual sizes around
0.1\%, instead of the nominal level of 5\%. These omnibus tests are thus
extremely conservative and they will not be able to detect an eventual misfitting of the volatility
model.

The above example 
illustrates the following fact, which is now better understood: many popular tools in multivariate statistics are severely challenged by the
emergence of high-dimensional data, and they need to be {\em re-examined} or {\em corrected}.
Recent advances in  high-dimensional statistics  demonstrate that
feasible and quality  solutions to
these high-dimensional challenges can be obtained by exploiting
tools of  random matrix theory
via  a precise  spectral analysis of large
sample covariance or sample autocovariance matrices.
For a review on such progress, we
refer to \cite{John07}, \cite{PaulAue14} and 
monograph \cite{YZB15a}.
In particular, asymptotic results found in this context using random
matrix theory exhibit 
fast convergence rates, and hence provide satisfactory finite sample approximation
for data analysis.

This paper proposes a new method for testing high-dimensional white
noise. 
 The test statistic  encapsulates the
serial correlations within and across all component series.
Precisely, the statistic is the sum of the squared
singular values of several lagged   sample autocovariance matrices.
Using random matrix theory, asymptotic normality for the test statistics
\ggai{under the null} is established under the Mar\v{c}enko-Pastur asymptotic regime where $p$
and $T$ are large and comparable.
Next, original methods are proposed for estimation of a few parameters
in the limiting distribution in order to get a fully implementable
version of the test.
{The asymptotic power of the  test under a
  specific alternative of first-order vector moving average process (VMA(1))
  has also been derived}.
Extensive simulation 
demonstrates excellent
behavior of the proposed tests for a wide array of
combinations of $(p,T)$, with
accurate size and satisfactory power.
In this paper, we also explore the reasons why
the popular multivariate Hosking and Li-McLeod tests are no longer reliable
when the dimension is large in relation to the sample size.

The rest of the paper is organized as follows.
Section~\ref{sec:multlag} presents the main contributions of the
paper. A new high-dimensional test for white noise is introduced, \ggai{its 
asymptotic distributions under both the null and the VMA(1)
alternative hypothesis are  established.}
Section~\ref{sec:simul} reports extensive Monte-Carlo experiments
which assess the finite sample behavior of the tests.
Whenever possible, comparison is made with the
popular Hosking and Li-McLeod tests.
Numerical evidence also indicates
that the new test is more powerful than that of \cite{CYZ17}. 
 Section~\ref{sec:proofs} collects all the
technical proofs.

\section{A test for high-dimensional white noise}\label{sec:multlag}

Let $\x_1,\cdots,\x_T$ be observations from a $p \times 1$
complex-valued linear process of the form 
\begin{equation*}
  \x_t=\sum_{l\geq0}A_l{\bf z}_{t-l},
\end{equation*}
where $A_l$ are $p\times p$ coefficient matrices, $\{{\bf z}_t\}$ is a
sequence of  $p$-dimensional 
random vectors such that, if the coordinates of ${\bf z}_t$ are
$\{z_{it}\}$, then the two dimensional array $\{z_{it}: 1\le i\le
p,t\ge 1\}$  of variables  are i.i.d.  satisfying
the moment conditions $\E z_{it}=0,~\E |z_{it}|^2=1$ and $
\E |z_{it}|^4=\nu_4<\infty$.
Hence $\E\x_t = {\bf 0}$, and  $\Sigma_\tau\equiv \cov(\x_{t+\tau},\x_t)$ depends on $\tau$
only. Note that $\Sigma_0=\mathop{\text{var}}(\x_t)$ is the population
covariance matrix of the time series.
The goal is to test  the null hypothesis 
\begin{equation} \label{eq:H0}
H_0:~ \x_t=A_0\z_t
\end{equation}
where $A_0$ is unknown.
This in fact tests the independence instead of linear independence
(i.e. $\Sigma_\tau=0$ for all $\tau \ne 0$), which is however
a common practice in the literature of white noise tests.
 Throughout the paper, the complex adjoint
of a matrix (or vector) $A$ is denoted by $A^*$.  For $\tau\ge 1$, let
$\widehat{\Sigma}_{\tau}$ be the {\em lag $\tau$ sample
	autocovariance matrix}
\begin{equation*}
\widehat{\Sigma}_{\tau}=\frac{1}{T}\sum_{t=1}^{T} \x_t\x_{t-\tau}^*.
\end{equation*}
\ggai{where by convention $\x_t=\x_{T+t}$ when $t\leq 0$.}
Under the null hypothesis, $\E(\widehat{\Sigma}_\tau)=0$ for $\tau\ne 0$, and
a natural test statistic
is \gai{the sum of squared singular values of the first $q$ lagged
  sample autocovariance matrices}:
\[G_q =
\sum_{\tau=1}^{q}\Tr\lb\widehat{\Sigma}_\tau^*\widehat{\Sigma}_\tau\rb
= \sum_{\tau=1}^{q}  \sum_{j} \alpha_{\tau,j}^2,
\]
\gai{where
  $\{\alpha_{\tau,j}\}$ are the singular values of $\widehat{\Sigma}_{\tau}$,}
and $\Tr$ denotes the trace operation for square matrices.
We reject the null hypothesis $H_0$ for large values of $G_q$.

Notice that the setting here allows for complex-valued observations: this is
important for applications in areas such as signal processing where
signal time series are usually complex-valued. 
However, for the sake of presentation, 
we mostly focus on the real-valued case in the subsequent
sections. 
Directions on how the tests can be extended to accommodate
complex-valued observations will be given in the last Section~\ref{subsec:complextests}.

\subsection{High dimensional asymptotics}\label{subsec:asymptotics}
We adopt  the so-called {\em  Mar\v{c}enko-Pastur
  regime} for asymptotic analysis, i.e. we assume $c_p=p/T\rightarrow c>0$
when $p, T\rightarrow \infty$.
\gai{This asymptotic framework has been
  widely employed in the literature on high-dimensional
  statistics,
  see,  \cite{John07}, \cite{PaulAue14},  also monograph  \cite{YZB15a} 
and the references within.
  Most of the results in this area concern sample
  covariance matrices only. However our test statistic $G_q$ is based on the
  sample autocovariance matrices, which is much less studied; see
\cite{LiuAuePaul15} and \cite{BB16}.
}

As a main contribution of the paper,
we characterize the asymptotic distribution of $G_q$ in this
high-dimensional setting when the observations are real-valued.
We introduce the following limits whenever
they exist: for $\ell\ge 1$,
\begin{equation}
  \label{eq:limits}
  s_\ell = \lim_{p\rightarrow \infty} \frac{1}{p}\Tr(\Sigma_0^\ell),
  \quad
  s_{d,\ell} = \lim_{p\rightarrow \infty}\frac{1}{p}\Tr(D^\ell(\Sigma_0)),
\end{equation}
where $D(A)$ denotes the diagonal matrix consisting of the main diagonal elements
 of $A$ (here the $d$ in the index is a reminder of this diagonal
 structure). 

\begin{theorem}\label{MainThmMult}
  Let $q\geq 1$ be a fixed integer, and the following assertions hold.
  \begin{enumerate}
  \item $\{{\bf z}_t\}$ is a sequence of real-valued independent $p\times 1$
    random vectors with independent components
    $\z_t=(z_{it})$
    satisfying
    $\E z_{it}=0,~\E z_{it}^2=1$ and $\E z_{it}^4=\nu_4<\infty$;
  \item
    $\left\{\Sigma_0\right\}$ is a sequence of $p\times p$
    semi-positive definite matrices with bounded spectral norm
    \gai{such that the limits $\{s_1, s_2\}$ and $\{s_{d,2}\}$ exist;}
  \item (Mar\v{c}enko-Pastur  regime).
    The dimension $p$ and the sample size $T$ grow  to infinity in
    a related way such that     $c_p:=p/T\rightarrow c>0$.
  \end{enumerate}
  Then when  $\x_t=\Sigma_0^{1/2}\z_t$, the limiting distribution of
  the test statistic $G_q$ is
  \begin{equation}
    \label{eq:Gq}
    G_q - {qTc_p^2}{s_1^2}
    ~~\xrightarrow{d}~~ \EN\lb 0,  \sigma^2(c) \rb,
  \end{equation}
  where
  \begin{equation}
    \label{eq:sigma2}
    \sigma^2(c)=
    2qc^2s_2^2 + 4q^2c^3(\nu_4-3)s_1^2s_{d,2} + 8q^2c^3s_1^2s_2 .
  \end{equation}
\end{theorem}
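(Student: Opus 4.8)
The plan is to expand $G_q$ as an explicit polynomial in the innovations and then isolate its mean, its variance, and its Gaussian fluctuation. Writing $\x_t=\Sigma_0^{1/2}\z_t$ and using $\x_s^*\x_t=\z_s^*\Sigma_0\z_t$, each summand becomes
\[
\Tr\!\lb\widehat{\Sigma}_\tau^*\widehat{\Sigma}_\tau\rb=\frac{1}{T^2}\sum_{s,t}(\z_s^*\Sigma_0\z_t)(\z_{t-\tau}^*\Sigma_0\z_{s-\tau}),
\]
which I would split into a \emph{diagonal part} ($s=t$) and an \emph{off-diagonal part} ($s\ne t$). The diagonal part is $T^{-2}\sum_\tau\sum_t u_tu_{t-\tau}$ with $u_t=\z_t^*\Sigma_0\z_t$; it will carry the centering and the $q^2$-terms of the variance, while the off-diagonal part has mean zero and will produce the $2qc^2s_2^2$ term.

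First I would compute $\E G_q$ exactly. Expanding in coordinates and using $\E z_{it}=0,\ \E z_{it}^2=1$, the only time-configuration that pairs all factors for a fixed lag $\tau\ge1$ is $s=t$ (the configurations $s=t\pm\tau$ leave an unpaired time and vanish), and since $u_t$ and $u_{t-\tau}$ are then independent this gives $\E G_q=\tfrac{q}{T}(\Tr\Sigma_0)^2$. This matches the centering $qTc_p^2s_1^2$ once the normalized trace $p^{-1}\Tr\Sigma_0$ is replaced by its limit $s_1$ (a step requiring the convergence in Assumption 2 to be fast enough, or equivalently centering at the finite-sample mean).

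Next I would compute the variance term by term. For the diagonal part, set $v_t=u_t-\Tr\Sigma_0$ and write $u_tu_{t-\tau}-(\Tr\Sigma_0)^2=v_tv_{t-\tau}+\Tr\Sigma_0\,(v_t+v_{t-\tau})$; the quadratic cross-term $v_tv_{t-\tau}$ is $O(p^2/T^3)$ in variance, hence negligible, while the linear piece equals $\tfrac{2q\,\Tr\Sigma_0}{T^2}\sum_t v_t$ after the circular sum over $t$. Since the $v_t$ are i.i.d.\ across $t$ with $\var(\z_t^*\Sigma_0\z_t)=(\nu_4-3)\Tr(D^2(\Sigma_0))+2\Tr(\Sigma_0^2)$, this converges to $4q^2c^3(\nu_4-3)s_1^2s_{d,2}+8q^2c^3s_1^2s_2$, i.e.\ the last two terms of $\sigma^2(c)$ (the factor $q^2$ coming from the shared $\z_t$ across lags). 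For the off-diagonal part I would compute its second moment by enumerating time-pairings: the two pairings $(s',t')=(t,s)$ and $(s',t')=(s,t)$ each contribute $(\Tr\Sigma_0^2)^2$ over $\sim T^2$ index choices, giving $2c^2s_2^2$ per lag, whereas cross-lag ($\tau\ne\tau'$) covariances and higher coincidences carry fewer free indices and vanish; thus the off-diagonal variance is $2qc^2s_2^2$. A short check that $\cov(\text{diag},\text{off-diag})\to0$ (no configuration pairs a degree-two form at a single time with the four distinct times of an off-diagonal term) then yields $\var(G_q)\to\sigma^2(c)$.

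Finally, to upgrade the moment calculation to a distributional limit I would use a martingale CLT with the filtration $\mathcal F_t=\sigma(\z_1,\dots,\z_t)$, writing $G_q-\E G_q=\sum_t(\E_t-\E_{t-1})G_q$ and verifying that the sum of conditional variances converges in probability to $\sigma^2(c)$ and that a Lyapunov fourth-moment bound on the increments holds; the Cram\'er--Wold device then gives joint normality of the diagonal and off-diagonal parts. The main obstacle is the off-diagonal part: it is a sum of bilinear forms strongly dependent through the shared vectors $\z_t$, so controlling its conditional variance and its fourth moment, carrying out the pairing bookkeeping cleanly, and showing that the cross-lag, fourth-cumulant, and circular-boundary contributions are genuinely lower order, is where the random-matrix estimates and the bulk of the technical effort will be required.
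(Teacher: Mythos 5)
Your moment calculations are correct and essentially reproduce what the paper relegates to its supplement: your mean $\E G_q=\tfrac{q}{T}\Tr^2(\Sigma_0)$ is the paper's $qV_1^2/T$, and your diagonal/off-diagonal split recovers exactly the paper's variance structure, with the off-diagonal pairings giving $2qc^2s_2^2$ (the $q(T-2)(V_3^2+(V_3')^2)/T^3$ term) and the linear fluctuation $\tfrac{2q\Tr\Sigma_0}{T^2}\sum_t(\z_t^*\Sigma_0\z_t-\Tr\Sigma_0)$ giving the $4q^2c^3s_1^2[(\nu_4-3)s_{d,2}+2s_2]$ term (the $4q^2V_1^2(V_2-V_1^2)/T^3$ term, which also absorbs the cross-lag covariances); your remark that the centering at $qTc_p^2s_1^2$ needs $p^{-1}\Tr\Sigma_0\to s_1$ fast enough is a legitimate subtlety the paper passes over. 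Where you genuinely diverge is the normality step: the paper does not prove a CLT at all but invokes Proposition 4.1 (Bhattacharjee and Bose, 2016), a free-probability/moment-method result asserting that the centered trace of any symmetric polynomial in $\{\widehat{\Sigma}_\tau,\widehat{\Sigma}_\tau^*\}$ has Gaussian limiting moments, so the whole proof reduces to the first- and second-moment computations you also carry out. You instead propose a self-contained martingale CLT with filtration $\sigma(\z_1,\dots,\z_t)$ plus Cram\'er--Wold. That route is standard in random matrix theory and should work (the circular-boundary terms are only $O(q)$ and harmless, and the martingale decomposition does not require adaptedness of the summands), and it would buy you independence from the external combinatorial machinery as well as a template that extends directly to the joint statements (Theorem 2.2); the price is that the conditional-variance convergence and Lyapunov bounds for the strongly dependent bilinear forms --- which you explicitly defer --- constitute the entire technical burden that the paper avoids by citation, so as written your argument is a correct plan whose hardest step remains to be executed rather than a complete alternative proof.
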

The proof of this theorem is given in Section \ref{sec:proofs}. {\blue It's worth mentioning here that in \cite{BB16}, they considered a simpler case when $\Sigma_0={\bf I}_p$, $q=1$ and $p=T$ with Gaussian population distribution, which is consistent with the results above.}

\medskip
Let $Z_\alpha$ be the upper-$\alpha$ quantile of the standard normal
distribution at level $\alpha$.
Based on Theorem~\ref{MainThmMult}, we obtain a  procedure for  testing  the null hypothesis in
(\ref{eq:H0}) as follows.
\begin{equation}\label{eq:Gqtest}
\text{\em Reject ~ $H_0$ ~~if ~~}
\left\{G_q-qTc_p^2s_1^2 > Z_{\alpha}   \gai{\sigma(c)}  \right\}.
\end{equation}

The illustration in Section
\ref{sec:simul} indicates that the test above is much more powerful
than  some classical alternatives, especially
when
 \gai{the dimension $p$ is growing linearly with the sample size $T$}.  The power of this
test is gained from gathering together the serial correlations
from the first $q$ lags within and across all $p$ component series; 
see the definition of $G_q$. Also note that the
asymptotic mean of $G_q$ is $qTc_p^2s_1^2$,
which grows linearly with $T$ (and $p$), while its
asymptotic variance  \gai{ $\sigma^2(c)$ is a
constant}.
This implies that even for moderately large  $T$, departure from
white noise in the first $q$ \ggai{lags of the} autocovariance matrices is
likely to result in a large and different mean, which will be a large multiple 
standard deviation away from $qTc_p^2s_1^2$ since 
the standard deviation
$\sigma(c)$ is constant.

However the test $G_q$ in \eqref{eq:Gqtest} is not yet practically
usable as it depends on 
(i) three characteristic constants,  $s_1$, $s_2$
and $s_{d,2}$ of the (population) cross-sectional
  covariance matrix $\Sigma_0$ and 
(ii) the fourth moment $\nu_4$ of the innovations $\{{\bf z}_t\}$.
These issues are addressed below.

\subsection{Estimation of the covariance characteristics $s_1$ and $s_2$}
\label{subsec:unknownSigma}

If the cross-sectional  covariance matrix $\Sigma_0$ is known, {\blue reasonable approximations} of these characteristics are
readily calculated from $\Sigma_0$. By Slutsky's Theorem, these estimates can substitute for the true ones in
the asymptotic variance $\sigma^2(c)$ {\em and} the
centering term  $qTc^2_p s_1^2$. The
test~\eqref{eq:Gqtest} still applies.

However, the population covariance matrix $\Sigma_0$ is in general unknown and the situation becomes
  challenging  as estimating a general $\Sigma_0$ is somehow out of
  reach without specific assumptions on its structure. Luckily, as
  observed previously, we only need  consistent estimates of
  the three characteristics.
  First of all, in the setting of Theorem~\ref{MainThmMult} and under
  the null, it is not difficult to find  {\em consistent} estimators for these characteristics, thus a consistent estimator of the limiting
  variance $\sigma^2(c)$. The situation is much more intricate for the
  centering term $qTc^2_p s_1^2$.
  Suppose ${\hat s}_1^2$ is a consistent estimator of
  $s_1^2$. Plugging it into the centering term leads to
  \begin{equation}
    \label{eq:decomp}
   G_{q,1}:= G_q - qT c^2_p  \hat{s}_1^2  =
    \left\{ G_q - qT c^2_p s_1^2  \right\}  +
    qT c^2_p     \left\{ s_1^2 -\hat{s}_1^2 \right\}~.
  \end{equation}
  Because of the multiplication by $T$ here,
  the asymptotic distribution would remain the same only if
  the estimation error   $\left\{\hat{s}_1^2   - s_1^2\right\}~$
  is of order $o_P(1/T)$.
  This is however not the case and in general the error is exactly of
  the order $O_p(1/T)$ and $T \left\{\hat{s}_1^2   -
    s_1^2\right\}$ converges to some other normal distribution.

  Our method is  as follows. First we establish the joint asymptotic
  distribution of $G_q-qTc_p^2s_1^2$ and
  $p \left\{\hat{s}_1^2   -    s_1^2\right\}$
  for a natural estimator $\hat{s}_1^2$.  This result extends Theorem
  \ref{MainThmMult} which addresses  the statistic $G_q-qTc_p^2s_1^2$ only. Next, the asymptotic null distribution of the ``feasible" test statistic $G_{q,1}$ is readily obtained as a simple consequence.

Precisely, consider the sample covariance matrix
  $\widehat{\Sigma}_0=\frac1T\sum_{t=1}^T\x_t\x_t^*$ and define the
  natural estimators of $s_1$ and $s_2$ as
  \[    {\hat s}_1 = \frac1p \Tr (\widehat{\Sigma}_0),   \quad {\hat s}_2 = \frac1p \Tr (\widehat{\Sigma}_0^2).
  \]
\begin{theorem}\label{Thm:s1}
  Assume the same conditions as in Theorem~\ref{MainThmMult}, then when  $\x_t=\Sigma_0^{1/2}\z_t$, we have
  \[
  \arraycolsep=1pt\def\arraystretch{1.8}
  \left(
	\begin{array}{c}
      p\lb \hat{s}_1^2-s_1^2\rb\\
      G_q - {qTc_p^2}{s_1^2}
	\end{array}\right)\xrightarrow{d}
	\]
  \[
  \arraycolsep=1pt\def\arraystretch{1.8}
  \EN_2\left(
	\left(
      \begin{array}{c}
        0\\
      0
      \end{array}
	\right),~\left(
      \begin{array}{cc}
        4c(\nu_4-3)s_1^2s_{d,2}+8cs_1^2s_2 & 4qc^2(\nu_4-3)s_1^2s_{d,2}+8qc^2s_1^2s_2\\
        4qc^2(\nu_4-3)s_1^2s_{d,2}+8qc^2s_1^2s_2& \quad \sigma^2(c)
      \end{array}
	\right)
  \right),
  \]
  where the variance $\sigma^2(c)$ is given in \eqref{eq:sigma2}.
\end{theorem}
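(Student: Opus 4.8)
The plan is to establish the joint asymptotic normality of the pair $\big(p(\hat s_1^2 - s_1^2),\, G_q - qTc_p^2 s_1^2\big)$ by the Cramér--Wold device, reducing a bivariate CLT to a family of univariate CLTs for linear combinations $\lambda_1\, p(\hat s_1^2 - s_1^2) + \lambda_2\,(G_q - qTc_p^2 s_1^2)$ indexed by $(\lambda_1,\lambda_2)\in\mathbb{R}^2$. The marginal for the second coordinate is exactly Theorem~\ref{MainThmMult}, so the new ingredients are the marginal CLT for $p(\hat s_1^2 - s_1^2)$ and, crucially, the cross-covariance. First I would linearize $\hat s_1^2 - s_1^2$: writing $\hat s_1 = \tfrac1p\Tr(\widehat\Sigma_0)$ and $s_1 = \lim \tfrac1p\Tr(\Sigma_0)$, the delta method gives $p(\hat s_1^2 - s_1^2) = 2s_1\cdot p(\hat s_1 - s_1) + o_P(1)$, so the whole problem reduces to the fluctuation of the \emph{linear spectral statistic} $p(\hat s_1 - s_1) = \Tr(\widehat\Sigma_0 - \Sigma_0)$, which is a quadratic form in the entries of $\z_t$.

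The technical engine I would use is the same one underlying Theorem~\ref{MainThmMult}: expand everything into the standardized innovations via $\x_t = \Sigma_0^{1/2}\z_t$ and reduce both $\Tr(\widehat\Sigma_0 - \Sigma_0)$ and $G_q = \sum_{\tau=1}^q \Tr(\widehat\Sigma_\tau^*\widehat\Sigma_\tau)$ to polynomials in the $z_{it}$. For the centering statistic, $p(\hat s_1 - s_1) = \tfrac1T\sum_{t=1}^T \big(\z_t^*\Sigma_0\z_t - \Tr\Sigma_0\big)$ is a sum of i.i.d.\ centered terms (up to the periodic-boundary convention), so its variance is computed directly; the appearance of $\nu_4 - 3$ and of $s_{d,2} = \lim \tfrac1p\Tr(D^2(\Sigma_0))$ comes from separating the diagonal contribution $\sum_i \Sigma_{0,ii}^2(\E z_{it}^4 - 3)$ from the off-diagonal Gaussian-like part $2\Tr(\Sigma_0^2)$, which precisely yields the $(1,1)$ entry $4c(\nu_4-3)s_1^2 s_{d,2} + 8c s_1^2 s_2$ after the factor $2s_1$ from the delta method is squared and the $p/T \to c$ scaling is inserted. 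I would then invoke a martingale central limit theorem (martingale differences indexed by $t$, or by the running index used in the proof of Theorem~\ref{MainThmMult}) to upgrade the variance computation to asymptotic normality for each fixed linear combination $\lambda_1 p(\hat s_1^2-s_1^2)+\lambda_2(G_q - qTc_p^2 s_1^2)$.

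The main obstacle will be the \textbf{cross-covariance} term, i.e.\ the off-diagonal entry $4qc^2(\nu_4-3)s_1^2 s_{d,2} + 8qc^2 s_1^2 s_2$. This requires carefully matching the fourth- and higher-order moment contributions from pairing the quadratic form $\Tr(\widehat\Sigma_0 - \Sigma_0)$ against the quartic form $G_q$. The natural approach is a joint moment/cumulant expansion: write the covariance $\cov\big(p(\hat s_1 - s_1),\, G_q\big)$ as a sum over index tuples, and show that only a controlled set of ``matched'' pairings survives asymptotically while the rest are lower order. I would expect the structural parallel between the $(1,1)$ entry and the cross term (both carry the same $(\nu_4-3)s_1^2 s_{d,2} + 2 s_1^2 s_2$ combination, differing only by the factor $q c$ versus $c$) to be explained by the fact that each of the $q$ lags $\tau=1,\dots,q$ contributes identically and that the coupling between $\widehat\Sigma_0$ and $\widehat\Sigma_\tau$ is governed by the same diagonal-versus-full dichotomy; confirming that the lag-$0$ matrix couples to each lag-$\tau$ matrix with the \emph{same} weight, and that cross terms between distinct lags $\tau\neq\tau'$ vanish, is the delicate bookkeeping step. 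Once the full $2\times 2$ covariance matrix is identified and the martingale CLT applied to each Cramér--Wold projection, the stated bivariate limit follows, and the feasible test statistic $G_{q,1}$ in \eqref{eq:decomp} is then handled as an immediate corollary by taking the linear combination with $\lambda_1 = qc$ (reflecting the factor $qTc_p^2$ applied to $\{s_1^2 - \hat s_1^2\}$).
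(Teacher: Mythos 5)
Your proposal is correct in outline and the moment bookkeeping you describe does reproduce the stated covariance matrix (the $(1,1)$ entry from $\mathrm{Var}(\z_t^*\Sigma_0\z_t)=2\Tr(\Sigma_0^2)+(\nu_4-3)\Tr(D^2(\Sigma_0))$ scaled by $(2s_1)^2 c$, and the cross term from the leading $\frac{4q}{pT^2}\Tr^2(\Sigma_0)\lb 2\Tr(\Sigma_0^2)+(\nu_4-3)\Tr(D^2(\Sigma_0))\rb$ contribution), but your route to asymptotic normality is genuinely different from the paper's, and you mischaracterize it as ``the same engine underlying Theorem~\ref{MainThmMult}.'' The paper never uses a martingale CLT: both Theorem~\ref{MainThmMult} and Theorem~\ref{Thm:s1} get Gaussianity for free from Proposition~\ref{prop:asyNormal} (the Bhattacharjee--Bose moment-method/free-probability result), which asserts that any symmetric polynomial statistic in $\{\widehat\Sigma_\tau,\widehat\Sigma_\tau^*\}$ --- hence any linear combination of $G_q$ and $p(\hat s_1^2-s_1^2)$ --- has asymptotically Gaussian fluctuations; the entire remaining work in the paper is the exact computation of means, variances and the covariance (Proposition~\ref{prop:joint}, proved via the quadratic-form moment lemmas in the supplement, which handle $p\hat s_1^2$ directly and therefore need the higher moments $V_5,V_6,V_7$ rather than a linearization). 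Your delta-method reduction $p(\hat s_1^2-s_1^2)=2s_1\, p(\hat s_1-s_1)+o_P(1)$ is valid (the remainder is $O_P(1/p)\cdot O_P(1)$) and actually simplifies the moment bookkeeping relative to the paper, but it shifts the burden onto establishing a CLT for the Cram\'er--Wold projections by a martingale-difference decomposition of the quartic statistic $G_q$ under the circular convention --- a substantial Bai--Silverstein-type argument that you only sketch, and which the paper's citation of Proposition~\ref{prop:asyNormal} makes unnecessary. Two minor points: for lag $0$ there is no periodic-boundary issue at all, so $p(\hat s_1-s_{1})$ is exactly a sum of i.i.d.\ centered quadratic forms; and in your final remark the coefficient in the decomposition \eqref{eq:decomp} is $\lambda_1=-qc_p$ (since $qTc_p^2(s_1^2-\hat s_1^2)=-qc_p\cdot p(\hat s_1^2-s_1^2)$), not $+qc$ --- with the correct sign the variance of $G_{q,1}$ collapses to $2qc^2s_2^2$ as in Proposition~\ref{prop:Gq1}.
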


The proof of this theorem is relegated to Section
\ref{sec:proofs}. 

Applying Theorem~\ref{Thm:s1} to the decomposition \eqref{eq:decomp}, the following proposition establishes the asymptotic null distribution of the feasible statistic $G_{q,1}$. Second order terms of the mean and variance of $G_{q,1}$ are also provided to improve finite sample performance.  

\begin{proposition}\label{prop:Gq1}
	Assume the same conditions as in Theorem~\ref{Thm:s1} and the
    observations are real-valued, we have
    \begin{equation}
      \label{eq:Gq1}
        \arraycolsep=1pt\def\arraystretch{1.8}
      G_{q,1}=G_q-qTc_{p}^2\hat{s}_1^2\xrightarrow{d}\EN(0, \xi^2(c)
      ),
    \end{equation}
    where $\xi^2(c)=2qc^2s_2^2$. Meanwhile,
    \begin{gather*}
    \mathbb{E}\lb G_{q,1}\rb=-\frac{q}{T^2}\lb 2\Tr(\Sigma_0^2)+(\nu_4-3)\Tr(D^2(\Sigma_0))\rb, ~\mathbb{E}(\hat{s}_1)=\frac{1}{p}\Tr(\Sigma_0),~\\
    \var\lb G_{q,1}\rb =\frac{2q}{T^2}\Tr^2(\Sigma_0^2)+\frac{q}{T^3}\lb 2\Tr(\Sigma_0^2)+(\nu_4-3)\Tr(D^2(\Sigma_0))\rb^2+o(\frac{1}{T}),\\
\mathbb{E}(\hat{s}_2)=\frac{1}{p}\Tr(\Sigma_0^2)+\frac{1}{pT}\Tr^2(\Sigma_0)+\frac{1}{pT}\lb \Tr(\Sigma_0^2)+(\nu_4-3)\Tr(D^2(\Sigma_0))\rb.\\
    \end{gather*}
\end{proposition}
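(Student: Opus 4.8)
The convergence in distribution \eqref{eq:Gq1} I would derive first, as essentially a one-line consequence of Theorem~\ref{Thm:s1}. The point is the exact identity $Tc_p^2 = pc_p$, which turns the decomposition \eqref{eq:decomp} into an exact linear combination of the two jointly Gaussian statistics of Theorem~\ref{Thm:s1}:
\[
G_{q,1} = \bigl(G_q - qTc_p^2 s_1^2\bigr) - qc_p\,\bigl[\,p(\hat{s}_1^2 - s_1^2)\,\bigr].
\]
Since $c_p\to c$, Slutsky's theorem together with the bivariate convergence of Theorem~\ref{Thm:s1} gives $G_{q,1}\xrightarrow{d} X_2 - qc\,X_1$, where $(X_1,X_2)$ is the limiting normal pair with covariance matrix $\Gamma$ listed in that theorem. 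Its variance is $\Gamma_{22} - 2qc\,\Gamma_{12} + q^2c^2\,\Gamma_{11}$, and the pleasant feature is a complete cancellation: in units of $q^2c^3$ the $(\nu_4-3)s_1^2 s_{d,2}$ coefficients sum to $4-8+4=0$ and the $s_1^2 s_2$ coefficients to $8-16+8=0$, so that only the $2qc^2 s_2^2$ term of $\Gamma_{22}$ survives, yielding $\xi^2(c)=2qc^2 s_2^2$. This is exactly why the feasible statistic is asymptotically free of both the nuisance kurtosis $\nu_4$ and the diagonal characteristic $s_{d,2}$.

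The finite-sample moment formulas are exact-in-$(p,T)$ identities rather than limits, and I would establish them by direct moment computation under $\x_t=\Sigma_0^{1/2}\z_t$. The identity $\E(\hat{s}_1)=\frac1p\Tr(\Sigma_0)$ is immediate from $\E(\x_t\x_t^*)=\Sigma_0$. For $\E(\hat{s}_2)$ I would expand $\Tr(\widehat{\Sigma}_0^2)=T^{-2}\sum_{s,t}(\x_s^*\x_t)^2$ and split into off-diagonal ($s\neq t$) and diagonal ($s=t$) terms. Each off-diagonal term contributes $\Tr(\Sigma_0^2)$, while the $T$ diagonal terms require the quadratic-form identity $\E(\z_t^*\Sigma_0\z_t)^2 = \Tr^2(\Sigma_0)+2\Tr(\Sigma_0^2)+(\nu_4-3)\Tr(D^2(\Sigma_0))$; assembling the $T(T-1)$ off-diagonal and $T$ diagonal contributions against the $T^{-2}$ prefactor reproduces the stated expression. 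The excess-kurtosis term appears because, expanding in the i.i.d.\ entries $z_{it}$, the fourth moment $\E z_{it}^4=\nu_4$ survives only on the all-indices-equal coincidences, which collapse onto the diagonal $D(\Sigma_0)$.

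The mean of $G_{q,1}$ itself is the next target. Writing $\Tr(\widehat{\Sigma}_\tau^*\widehat{\Sigma}_\tau)=T^{-2}\sum_{s,t}(\x_s^*\x_t)(\x_{t-\tau}^*\x_{s-\tau})$ and using the circular convention $\x_t=\x_{T+t}$, a coincidence analysis of the time indices $\{s,t,s-\tau,t-\tau\}$ shows that for $T>2q$ the only surviving pairing is $s=t$, so that $\E(G_q)=\frac qT\Tr^2(\Sigma_0)$ exactly, with no $\nu_4$ contribution. Computing $\E(\hat{s}_1^2)$ likewise, its leading $\frac qT\Tr^2(\Sigma_0)$ cancels $\E(G_q)$ in $G_{q,1}=G_q-qTc_p^2\hat{s}_1^2$, and the residual comes entirely from the $O(T^{-1})$ finite-sample correction of $\E(\hat{s}_1^2)$, leaving $\E(G_{q,1})=-\frac{q}{T^2}\bigl[2\Tr(\Sigma_0^2)+(\nu_4-3)\Tr(D^2(\Sigma_0))\bigr]$.

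I expect $\var(G_{q,1})$ to be the main obstacle. It requires the exact second moments of $G_q$ and of $\hat{s}_1^2$ together with their covariance, and hence the expectation of a product of four sample (auto)covariance matrices, an eighth-degree polynomial in the data. The work is a careful Wick-type enumeration of the surviving index-coincidence patterns, tracking combinatorial multiplicities and every appearance of $\nu_4$ and of the diagonal operator $D(\cdot)$, with the periodic indexing handled so the computation stays exact. This produces the leading $\frac{2q}{T^2}\Tr^2(\Sigma_0^2)$ (consistent with $\xi^2(c)$ since $\Tr(\Sigma_0^2)\sim p\,s_2$), the $\frac{q}{T^3}(\cdots)^2$ correction, and an $o(T^{-1})$ remainder. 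In practice much of this bookkeeping should be recoverable as a refinement of the moment calculations already carried out in proving Theorem~\ref{Thm:s1} rather than redone from scratch; the chief risk is arithmetic error in the quartic expansion, which is why I would cross-check the leading coefficient against the already-established limit $\xi^2(c)$.
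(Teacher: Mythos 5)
Your proposal is correct and follows essentially the same route as the paper: the limit law comes from applying the joint CLT of Theorem~\ref{Thm:s1} to the decomposition \eqref{eq:decomp} (your explicit cancellation $\Gamma_{22}-2qc\,\Gamma_{12}+q^2c^2\Gamma_{11}=2qc^2s_2^2$ is exactly the arithmetic the paper leaves implicit), and the finite-sample moment identities are obtained by the same Wick-type coincidence analysis of quadratic forms that the paper packages as Proposition~\ref{prop:joint} and its supporting lemmas in the supplement.
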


Now we aim at  consistent estimates for the unknown
quantity $s_2$ in the asymptotic variance $\xi^2(c)$. It is well known
that almost surely (\cite{Bai10}), 
\[ \hat{s}_1\rightarrow s_1,~\hat{s}_2\rightarrow s_2+cs_1^2.\]
Therefore
$\tilde{s}_2=\hat{s}_2-c_p\hat{s}_1^2$ is a strongly consistent
estimator of $s_2$.

In summary,
when $\Sigma_0$ is unknown, we obtain a  procedure for  testing  the
null hypothesis of white noise
(\ref{eq:H0}) as follows:
\begin{equation}
\label{eq:Gq1test0}
\text{\em  Reject $H_0$ if ~~}
\left\{ G_{q}-qTc_{p}^2\hat{s}_1^2 > Z_{\alpha}
  {\tilde \xi}  \right\}
\end{equation}
where
$ {\tilde \xi}^2  =
2qc_{p}^2\tilde{s}_2^2.
$

\subsection{Finite sample correction and estimation for  non-Gaussian  innovations}\label{subsec:nu4}

Although the test procedure \eqref{eq:Gq1test0} is already
practically  usable,  it can be further improved by finite sample
corrections provided in  Proposition~\ref{prop:Gq1} which are 
especially useful for non-Gaussian population
where $\nu_4\ne 3$.  To this goal,  it remains to obtain a consistent
estimate for
(i) the covariance  characteristic
\[
  s_{d,2}=   \frac1p  \sum_{i=1}^p d_{i}^2=\frac1p  \Tr(D^2(\Sigma_0)), 
\]
where $d_{i}=  \Sigma_{0,ii}$ is the $i$th diagonal element of $\Sigma_0$,
and (ii) 
the fourth moment $\nu_4$ of the innovations.

\medskip \noindent {\em (i) Estimation of $s_{d,2}$}. \quad 
By its very definition,
$d_i$ can be consistently estimated
by its sample counterpart
\[  \tilde d_{i}=     \frac1{T} \sum_{t=1}^T  x_{it}^2.
\]
It follows that a consistent estimator for $s_{d,2}$ is simply
${\tilde s}_{d,2} =p^{-1} \sum_{i=1}^p  {\tilde  d}_i^2 $.

\medskip \noindent {\em (ii) Estimation of $\nu_4$}. \quad 
This is again a non trivial problem which has not been
touched yet in the literature (to our best knowledge). 
In order to get rid of the role of the unknown cross-sectional covariance
matrix $\Sigma_0$, we adopt  the following splitting strategy:
the original data $\{\x_t,~t=1,\cdots,T\}$  are split into two halves of
length
$T_1$ and $T_2$, respectively  ($T=T_1+T_2$). 
Define the two corresponding sample cross-sectional covariance
matrices 
\begin{equation}
  \label{eq:cov}
   S_{n,1}=\frac{1}{T_1}\sum_{t=1}^{T_1}\x_{t}\x_{t}^*,\quad 
   S_{n,2}=\frac{1}{T_2}\sum_{t=1}^{T_2}\x_{t+T_1}\x_{t+T_1}^*.
\end{equation}
This yields the corresponding $F$-ratio, or Fisher matrix,  $F_n =S_{n,1}^{-1}S_{n,2}$. 
Observe that this matrix does not depend on the value of the
cross-sectional covariance $\Sigma_0$ so that in what follows we can
assume $\Sigma_0=I$. 

Let $(\lambda_j)_{1\le j\le p}$ be the eigenvalues of $F_n$. Define
$K$ test functions  $f_k(x)=\log(a_k + b_k x)$ where 
$(a_k,b_k)_{1\le k\le K}$ are some positive constants. 
For each $k$, we have an eigenvalue  statistic of the Fisher matrix
\[    X_{T,k}  = f_k(\lambda_1)+\cdots+ f_k(\lambda_p) - p \int f_k(x) 
dF_{c_{p,1},c_{p,2}} (x)~, 
\] 
where $c_{p,i}= p/T_i$ ($i=1,2$) and $F_{c,c'}$ is the limiting
\ggai{Wachter} distribution with index $(c,c')$, see formula (3.1) in \cite{Zheng12}.
It is proved on page 452 of the reference, when $p,T_1,T_2$ grow
proportionally to infinity,
\begin{equation}
  \label{eq:reg}
  X_{T,k} =  u_{T,k} + v_{T,k} \nu_4 +\varepsilon_{T,k}    ~,
\end{equation}
where $\{u_{T,k}, v_{T,k}\}$ are  constants depending on 
$\{c_{p_i}\}$ and $(a_k,b_k)$, and $\{\varepsilon_{T,k}\} $ is a
centered and asymptotically Gaussian error. 
Then the least squares estimator of $\nu_4$ using the above regression
model leads to a consistent estimate, say ${\hat\nu}_4$ for the
unknown parameter.

Under the null hypothesis, the observations are independent. We may
repeat this estimation procedure, say $B$ times, by taking $B$ random splits of
the initial sample.  The final estimate of $\nu_4$ is then taken to be
the average of the estimates $\{{\hat\nu}_{4,b}\}_{1\le b\le B}$.

Finally we can implement the following test procedure with finite sample correction for the 
null hypothesis of white noise \eqref{eq:H0}:

\begin{equation}
  \label{eq:Gq1test}
  \text{\em  Reject $H_0$ if ~~}
  \left\{ G_{q,1}^*=G_{q}-qTc_{p}^2\hat{s}_1^2+\frac{1}{T}\cdot qc_p\lb 2\tilde{s}_2+(\hat{\nu}_4-3)\tilde{s}_{d,2}\rb > Z_{\alpha}
    {\hat \xi}  \right\}
\end{equation}
where
\begin{gather*}
 {\hat \xi}^2  =
2qc_{p}^2\tilde{s}_2^2+\frac{1}{T}\cdot qc_p^2\lb 2\tilde{s}_2+(\hat{\nu}_4-3)\tilde{s}_{d,2}\rb^2
\end{gather*}
with the above estimator $\hat \nu_4$ for the fourth moment. Note that
the estimation procedure proposed above for $\nu_4$ is only feasible
when $p<T$, thus we can only implement test \eqref{eq:Gq1test} when
$p<T$. However, our primary test statistic is $G_{q,1}$ in
\eqref{eq:Gq1test0} which doesn't require estimation of $\nu_4$. In
fact simulation results  in Section~\ref{subsec:comp} and
\ref{subsec:VMAComp} show that the statistic $G_{q,1}$ already
performs well. Therefore we can directly use $G_{q,1}$ when $p>T$.

\subsection{Tests when the observations are complex-valued}\label{subsec:complextests}
To proceed, we first define $\x_t = \Sigma_0^{1/2}\z_t$ where $\z_t$
is a {\em proper} complex random vector, and $\Sigma_0^{1/2}$ is such
that $\Sigma_0^{1/2}$ is Hermitian with $\Sigma_0 =
\Sigma_0^{1/2}(\Sigma_0^{1/2})^*$ ({\em Properness} of a complex
random vector $\z_t$ means that $\E(\z_t\z_t^T) = 0$). We immediately have
\[0 = \E(\z_t\z_t^T) = \E(z_{it}^2)I_p,\]
so that $\E(z_{it}^2) = 0$ for all $i=1,\ldots,p$ and $t=1,\ldots,T$. It also implies that $b = |\E(z_{it}^2)|^2 = 0$.
Since $\x_t = \Sigma_0^{1/2}\z_t$, we have 
\[\E(\x_t\x_t^T) = \E(\Sigma_0^{1/2}\z_t\z_t^T\Sigma_0^{T/2}) = 0,\]
 so that we are also assuming an observed vector $\x_t$ is proper. 

From Corollary \ref{Cor:Gq}, since $b=0$ from the properness of $\z_t$, the asymptotic covariance of $G_q$ is then
\[\var(G_q) \rightarrow qc^2s_2^2 + 4q^2c^3s_1^2[(\nu_4-2)s_{d,2}-s_2' + 2s_{r,2}],\]
where $s_2' = \lim_{p\rightarrow \infty}\Tr(\Sigma_0\Sigma_0^T)/p$, $s_{r,2} = \lim_{p\rightarrow \infty} \Tr(\Re^2(\Sigma_0))/p$, with $\Re(A) = (\Re(a_{ij}))$, the matrix of the real parts of all entries in $A$. 

Using Lemma~1.1 of the supplemental paper~\cite{supp},
defining $\Im(A) = (\Im(a_{ij}))$ to be the matrix of the imaginary parts of all entries in $A$, we have
\begin{align*}
  2\Tr(\Re^2(\Sigma_0)) - \Tr(\Sigma_0\Sigma_0^T) &= 2\Tr(\Sigma_0\Re(\Sigma_0)) - \Tr(\Sigma_0(\Re(\Sigma_0) - i\Im(\Sigma_0)))\\
  &= \Tr(\Sigma_0(\Re(\Sigma_0) + i\Im(\Sigma_0))) = \Tr(\Sigma_0^2), 
\end{align*}
so that $2s_{r,2} - s_2' = s_2$. The asymptotic variance for $G_q$ is then
\[\var(G_q) \rightarrow \sigma^2(c) = qc^2s_2^2 + 4q^2c^3s_1^2[(\nu_4-2)s_{d,2} + s_2],\]
which can be estimated consistently using the estimators suggested in Section \ref{subsec:unknownSigma}.

\subsection{Testing power of $G_{q,1}$} In this section, we look into
the power function of the tests when an alternative hypothesis $H_1$
is specified. Here we assume that under $H_1$, the observations
$\x_1,\cdots,\x_T$ follows from a $p\times 1$ real-valued
$p$-dimensional first-order vector moving average  process, VMA(1) in
short,  of the form
\begin{equation}\label{eq:H1}
  H_1:~\x_t=A_0\z_t+A_1\z_{t-1},
\end{equation}
where $A_0$, $A_1$ are $p\times p$ coefficient matrices. Now we only
consider the asymptotic behavior of our test statistic $G_q$ and
$G_{q,1}$ when $q=1$ since higher order autocorrelations of $\x_t$ are
null under both $H_0$ and $H_1$.

Denote 
$$\widetilde{\Sigma}_0=A_0^*A_0,\quad\widetilde{\Sigma}_1=A_1^*A_1,\quad\widetilde{\Sigma}_{01}=A_0^*A_1,$$
we characterize the joint limiting distribution of $\hat{s}_1^2$ and
$G_1$ under the VMA(1) alternative \eqref{eq:H1} as follows.

\begin{theorem}\label{Thm:jointH1}
	Assume that
  \begin{enumerate}
	\item $\{{\bf z}_t\}$ is a sequence of real-valued independent $p\times 1$
	random vectors with independent components
	$\z_t=(z_{it})$
	satisfying
	$\E z_{it}=0,~\E z_{it}^2=1$ and $\E z_{it}^4=\nu_4<\infty$;
	\item $\widetilde{\Sigma}_0$, $\widetilde{\Sigma}_1$ and $\widetilde{\Sigma}_{01}\widetilde{\Sigma}_{01}^*$ all have bounded spectral norm and for integers $i,j,k\geq 0$, $1\leq i+j+k\leq 4$, the limits 
$	\lim_{T\rightarrow\infty}\frac{1}{T}\Tr\lb\widetilde{\Sigma}_0^{i}\widetilde{\Sigma}_1^j\widetilde{\Sigma}_{01}^k\rb$ 
	exist;
		\item (Mar\v{c}enko-Pastur  regime).
	The dimension $p$ and the sample size $T$ grow  to infinity in
	a related way such that     $c_p:=p/T\rightarrow c>0$.
\end{enumerate}
Then under the VMA(1) alternative \eqref{eq:H1}, the joint limiting distribution of
the $G_1$ and $\hat{s}_1^2$ is
\[
\lb
\begin{array}{cc}
\sigma_G^2&\sigma_{GS}\\
\sigma_{GS}&\sigma_S^2
\end{array}
\rb^{-1/2}\lb
\begin{array}{c}
G_1-\mu_G\\
Tc_p^2\hat{s}_1^2-\mu_S
\end{array}
\rb\xrightarrow{d} \EN_2\lb
{\bf 0},~ {I}_2\rb,
\]
where
\begin{align*}
\mu_G&=\frac{1}{T}\Tr^2\lb\widetilde{\Sigma}_0+\widetilde{\Sigma}_1\rb+\Tr\lb\widetilde{\Sigma}_0\widetilde{\Sigma}_1\rb+\frac{2}{T}\Tr^2\lb\widetilde{\Sigma}_{01}\rb\\
&+\frac{1}{T}\left[ \Tr\lb\widetilde{\Sigma}_0\widetilde{\Sigma}_1\rb+(\nu_4-3)\Tr\lb D(\widetilde{\Sigma}_0)D(\widetilde{\Sigma}_1)\rb\right],\\
\mu_S&=\frac{1}{T}\Tr^2\lb\widetilde{\Sigma}_0+\widetilde{\Sigma}_1\rb+\frac{4}{T^2}\Tr\lb\widetilde{\Sigma}_{01}\widetilde{\Sigma}_{01}^*\rb\\
&+\frac{1}{T^2}\left[2\Tr\lb\widetilde{\Sigma}_0+\widetilde{\Sigma}_1\rb^2+(\nu_4-3)\Tr\lb D^2(\widetilde{\Sigma}_0+\widetilde{\Sigma}_1)\rb\right],
\end{align*}

\begin{align*}
\sigma_S^2=&~\frac{4}{T^3}\Tr^2\lb\widetilde{\Sigma}_0+\widetilde{\Sigma}_1\rb\left[2\Tr\lb\widetilde{\Sigma}_0+\widetilde{\Sigma}_1\rb^2+(\nu_4-3)\Tr\lb D^2\lb \widetilde{\Sigma}_0+\widetilde{\Sigma}_1\rb\rb\right]\\ &
+\frac{16}{T^3}\Tr^2\lb\widetilde{\Sigma}_0+\widetilde{\Sigma}_1\rb\Tr\lb\widetilde{\Sigma}_{01}\widetilde{\Sigma}_{01}^*\rb+ R_n,
\end{align*}
and
{\small
\begin{align*}
~&\sigma_G^2=\frac{4}{T^3}\Tr^2\lb \widetilde{\Sigma}_0+\widetilde{\Sigma}_1\rb\left[2\Tr\lb \widetilde{\Sigma}_0+\widetilde{\Sigma}_1\rb^2+(\nu_4-3)\Tr\lb D^2\lb \widetilde{\Sigma}_0+\widetilde{\Sigma}_1\rb\rb\right]\\ &
+\frac{8}{T^2}\Tr\lb \widetilde{\Sigma}_0+\widetilde{\Sigma}_1\rb \left[2\Tr\lb \widetilde{\Sigma}_0\widetilde{\Sigma}_1\lb\widetilde{\Sigma}_0+\widetilde{\Sigma}_1\rb\rb+(\nu_4-3)\Tr\lb D\lb\widetilde{\Sigma}_0\widetilde{\Sigma}_1\rb D\lb \widetilde{\Sigma}_0+\widetilde{\Sigma}_1\rb \rb\right]\\ &
+\frac{2}{T^2}\Tr^2\lb \widetilde{\Sigma}_0^2+\widetilde{\Sigma}_1^2\rb+\frac{6}{T^2}\Tr^2\lb\widetilde{\Sigma}_0\widetilde{\Sigma}_1 \rb+\frac{4}{T}\left[ 2\Tr\lb \widetilde{\Sigma}_0\widetilde{\Sigma}_1\rb^2+(\nu_4-3)\Tr\lb D^2\lb \widetilde{\Sigma}_0\widetilde{\Sigma}_1\rb\rb\right]\\ &
+\frac{8}{T^2}\Tr\lb \widetilde{\Sigma}_{01}\widetilde{\Sigma}_{01}^*\rb\Tr\lb\widetilde{\Sigma}_0^2+\widetilde{\Sigma}_1^2\rb+\frac{16}{T^2}\Tr\lb \widetilde{\Sigma}_{01}\widetilde{\Sigma}_1\rb\Tr\lb \widetilde{\Sigma}_{01}\widetilde{\Sigma}_0\rb\\ &
+\frac{16}{T^2}\Tr\lb \widetilde{\Sigma}_0+\widetilde{\Sigma}_1\rb \left[ \Tr\lb \widetilde{\Sigma}_{01}^*\widetilde{\Sigma}_{01}\widetilde{\Sigma}_0\rb+ \Tr\lb \widetilde{\Sigma}_{01}\widetilde{\Sigma}_{01}^*\widetilde{\Sigma}_1\rb\right]\\ &
+\frac{16}{T^2}\Tr(\widetilde{\Sigma}_{01})\left[\Tr\lb\widetilde{\Sigma}_0^2\widetilde{\Sigma}_{01}^*\rb+\Tr\lb\widetilde{\Sigma}_1^2\widetilde{\Sigma}_{01}\rb+2\Tr\lb \widetilde{\Sigma}_1\widetilde{\Sigma}_{01}\widetilde{\Sigma}_0\rb\right]\\ &
+\frac{4}{T}\Tr\lb \widetilde{\Sigma}_{01}^*\widetilde{\Sigma}_{01}\widetilde{\Sigma}_0^2+\widetilde{\Sigma}_{01}\widetilde{\Sigma}_{01}^*\widetilde{\Sigma}_1^2+2\widetilde{\Sigma}_{01}^*\widetilde{\Sigma}_1\widetilde{\Sigma}_{01}\widetilde{\Sigma}_0\rb\\ &
+\frac{16}{T^3}\Tr^2\lb \widetilde{\Sigma}_0+\widetilde{\Sigma}_1 \rb\Tr\lb \widetilde{\Sigma}_{01}\widetilde{\Sigma}_{01}^*\rb+\frac{16}{T^3}\Tr^2(\widetilde{\Sigma}_{01})\Tr\lb \widetilde{\Sigma}_0+\widetilde{\Sigma}_1\rb^2\\ &
+\frac{32}{T^3}\Tr\lb\widetilde{\Sigma}_0+\widetilde{\Sigma}_1\rb\Tr\lb\widetilde{\Sigma}_{01}\rb\Tr\lb \widetilde{\Sigma}_{01}\lb\widetilde{\Sigma}_0+\widetilde{\Sigma}_1\rb\rb\\ &
+\frac{4}{T}\Tr\lb\widetilde{\Sigma}_{01}\widetilde{\Sigma}_{01}^*\widetilde{\Sigma}_{01}^*\widetilde{\Sigma}_{01}\rb+\frac{12}{T^2}\Tr^2\lb\widetilde{\Sigma}_{01}\widetilde{\Sigma}_{01}^*\rb+\frac{16}{T^2}\Tr(\widetilde{\Sigma}_{01})\Tr\lb\widetilde{\Sigma}_{01}\widetilde{\Sigma}_{01}^*\widetilde{\Sigma}_{01}^* \rb\\ &
+\frac{16}{T^3}\Tr^2\lb \widetilde{\Sigma}_{01}\rb \left[ \Tr\lb\widetilde{\Sigma}_{01}\rb^2+2\Tr\lb\widetilde{\Sigma}_{01}\widetilde{\Sigma}_{01}^*\rb+(\nu_4-3)\Tr\lb D^2\lb \widetilde{\Sigma}_{01}\rb\rb\right]+\frac{8}{T^2}\Tr^2\lb \widetilde{\Sigma}_1 \widetilde{\Sigma}_{01}\rb\\ &
+\frac{16}{T^3}\Tr\lb\widetilde{\Sigma}_{01}\rb\Tr\lb\widetilde{\Sigma}_0+\widetilde{\Sigma}_1\rb\left[2\Tr\lb \widetilde{\Sigma}_{01}\lb\widetilde{\Sigma}_0+\widetilde{\Sigma}_1\rb\rb+(\nu_4-3)\Tr\lb D\lb \widetilde{\Sigma}_{01} \rb D\lb \widetilde{\Sigma}_0+\widetilde{\Sigma}_1 \rb\rb\right]\\ &
+\frac{8}{T^2}\Tr^2\lb \widetilde{\Sigma}_0 \widetilde{\Sigma}_{01}\rb +\frac{16}{T^2}\Tr\lb\widetilde{\Sigma}_{01}\rb\left[2\Tr\lb\widetilde{\Sigma}_0\widetilde{\Sigma}_1\widetilde{\Sigma}_{01}\rb+(\nu_4-3)\Tr\lb D\lb\widetilde{\Sigma}_0\widetilde{\Sigma}_1 \rb D\lb \widetilde{\Sigma}_{01}\rb\rb\right]+R_n,
\end{align*}
}
{\small
\begin{align*}
&~\sigma_{GS}~=\frac{4}{T^3}\Tr^2\lb\widetilde{\Sigma}_0+\widetilde{\Sigma}_1\rb\left[2\Tr\lb \widetilde{\Sigma}_0+\widetilde{\Sigma}_1\rb^2+(\nu_4-3)\Tr\lb D^2\lb \widetilde{\Sigma}_0+\widetilde{\Sigma}_1\rb\rb\right]\\ &
+\frac{4}{T^2}\Tr\lb\widetilde{\Sigma}_0+\widetilde{\Sigma}_1\rb\left[2\Tr\lb \widetilde{\Sigma}_0\widetilde{\Sigma}_1\lb\widetilde{\Sigma}_0+\widetilde{\Sigma}_1\rb\rb+(\nu_4-3)\Tr\lb D\lb\widetilde{\Sigma}_0\widetilde{\Sigma}_1\rb D\lb \widetilde{\Sigma}_0+\widetilde{\Sigma}_1\rb \rb\right]\\ &
+\frac{8}{T^2}\Tr\lb\widetilde{\Sigma}_0+\widetilde{\Sigma}_1\rb\left[ \Tr\lb \widetilde{\Sigma}_{01}^*\widetilde{\Sigma}_{01}\widetilde{\Sigma}_0\rb+ \Tr\lb \widetilde{\Sigma}_{01}\widetilde{\Sigma}_{01}^*\widetilde{\Sigma}_1\rb\right]+\frac{16}{T^3}\Tr^2\lb \widetilde{\Sigma}_0+\widetilde{\Sigma}_1 \rb\Tr\lb \widetilde{\Sigma}_{01}\widetilde{\Sigma}_{01}^*\rb\\ &
+\frac{8}{T^3}\Tr\lb\widetilde{\Sigma}_{01}\rb\Tr\lb\widetilde{\Sigma}_0+\widetilde{\Sigma}_1\rb\left[2\Tr\lb \widetilde{\Sigma}_{01}\lb\widetilde{\Sigma}_0+\widetilde{\Sigma}_1\rb\rb+(\nu_4-3)\Tr\lb D\lb \widetilde{\Sigma}_{01} \rb D\lb \widetilde{\Sigma}_0+\widetilde{\Sigma}_1 \rb\rb\right]\\ &
+\frac{16}{T^3}\Tr\lb\widetilde{\Sigma}_0+\widetilde{\Sigma}_1\rb\Tr\lb\widetilde{\Sigma}_{01}\rb\Tr\lb \widetilde{\Sigma}_{01}\lb\widetilde{\Sigma}_0+\widetilde{\Sigma}_1\rb\rb+R_n.
\end{align*}}
Here the $R_n$'s, possibly different, represent remainders which have
smaller orders than the other terms listed  in $\sigma_S^2$,
$\sigma_G^2$ and $\sigma_{GS}$, respectively.
\end{theorem}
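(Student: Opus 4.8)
\section*{Proof proposal}

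The plan is to establish a joint central limit theorem for the pair $(G_1,\,Tc_p^2\hat s_1^2)$ by treating both statistics as degree-four polynomials in the innovation entries $\{z_{it}\}$, computing their first two joint moments exactly, and then invoking a martingale central limit theorem combined with the Cram\'er--Wold device. The first step is to substitute $\x_t=A_0\z_t+A_1\z_{t-1}$ into $\widehat\Sigma_1=T^{-1}\sum_t\x_t\x_{t-1}^*$ and into $\hat s_1=(pT)^{-1}\sum_t\x_t^*\x_t$, and to expand $G_1=\Tr(\widehat\Sigma_1^*\widehat\Sigma_1)$ and $Tc_p^2\hat s_1^2=T^{-3}\lb\sum_t\x_t^*\x_t\rb^2$ as sums over the innovation indices. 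Because each $\x_t$ mixes only $\z_t$ and $\z_{t-1}$, the process $\{\x_t\}$ is one-dependent; this is what makes the martingale/$m$-dependent machinery applicable and is also why one cannot simply stack $\y_t=(\z_t^\top,\z_{t-1}^\top)^\top$ and appeal to a standard sample-autocovariance CLT with independent columns, since consecutive $\y_t$ share the innovation $\z_{t-1}$.

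Second, I would compute the exact means. Since $\E\widehat\Sigma_1=A_1A_0^*$, the ``signal'' part of $G_1$ contributes $\Tr\lb(A_1A_0^*)^*A_1A_0^*\rb=\Tr(\widetilde\Sigma_0\widetilde\Sigma_1)$, while the ``noise'' part produces the $T^{-1}\Tr^2(\widetilde\Sigma_0+\widetilde\Sigma_1)$, the $2T^{-1}\Tr^2(\widetilde\Sigma_{01})$, and the $(\nu_4-3)$ diagonal terms collected in $\mu_G$; the analogous accounting for $\hat s_1^2$ yields $\mu_S$. Each evaluation is a Wick-type enumeration over pairings of the $z$-indices, with the four-fold index coincidences producing exactly the fourth-cumulant corrections $(\nu_4-3)\Tr(D^2(\cdot))$. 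As a sanity check, setting $A_1=0$ collapses $\mu_G$ to $T^{-1}\Tr^2(\Sigma_0)=qTc_p^2s_1^2$, recovering the null centering of Theorem~\ref{MainThmMult}.

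Third, for the limiting law, by Cram\'er--Wold it suffices to show that every fixed linear combination $aG_1+b\,Tc_p^2\hat s_1^2$ is asymptotically normal with variance $a^2\sigma_G^2+2ab\,\sigma_{GS}+b^2\sigma_S^2$. I would build the filtration $\mathcal F_t=\sigma(\z_0,\ldots,\z_t)$, decompose each centered statistic into the martingale differences $(\E_t-\E_{t-1})(\cdot)$ with $\E_t$ the conditional expectation given $\mathcal F_t$, and then verify the conditional Lyapunov condition together with convergence of the summed conditional variances to the stated quadratic form. The bounded-spectral-norm assumption on $\widetilde\Sigma_0$, $\widetilde\Sigma_1$ and $\widetilde\Sigma_{01}\widetilde\Sigma_{01}^*$, combined with a standard truncation of the $z_{it}$, controls the Lindeberg remainder and the error terms $R_n$. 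The non-degeneracy of the limiting $2\times2$ covariance matrix then legitimizes the whitening form in which the theorem is stated.

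The main obstacle is the combinatorial bookkeeping behind $\sigma_G^2$, $\sigma_S^2$ and especially $\sigma_{GS}$. The variance of the quartic form $G_1$ requires enumerating the connected matchings of the innovation indices across its two quartic factors and assigning each surviving matching to one of the many trace monomials in $\widetilde\Sigma_0$, $\widetilde\Sigma_1$ and $\widetilde\Sigma_{01}$, all while carefully distinguishing $\widetilde\Sigma_{01}$ from $\widetilde\Sigma_{01}^*$ and separating the Gaussian pairings from the $(\nu_4-3)$ diagonal corrections. The cross-term $\sigma_{GS}$ is the delicate one: it couples the time-diagonal form $\hat s_1^2$ with the lag-one form $G_1$, so only matchings that link the two survive, and these are precisely the ones generating the mixed $\widetilde\Sigma_{01}$-monomials. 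Everything else --- tightness, negligibility of the $R_n$, and the Lindeberg verification --- is routine given the moment and norm assumptions.
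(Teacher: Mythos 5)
Your proposal is essentially sound and its moment bookkeeping coincides with what the paper actually does, but the device you use to get joint normality is genuinely different from the paper's. The paper does not run a martingale CLT: it observes that, also under the VMA(1) alternative, both $G_q$ and $p(\hat s_1^2-s_1^2)$ are symmetric polynomials in $\{\widehat\Sigma_\tau,\widehat\Sigma_\tau^*:\tau\ge 0\}$, so asymptotic normality of every linear combination comes for free from Proposition \ref{prop:asyNormal} (the moment-method/free-probability result of Bhattacharjee and Bose), and the whole proof then reduces to the Wick-type evaluation of means, variances and the covariance. This evaluation is organized exactly as you suggest, via the expansion $\x_t=A_0\z_t+A_1\z_{t-1}$, except that the paper splits $G_1=G(I)+G(II)+G(III)$ and $Tc_p^2\hat s_1^2=S(I)+S(II)+S(III)$ according to how many $\widetilde\Sigma_{01}$-factors appear, and computes all pairwise moments of these six pieces (Proposition \ref{prop:H1Lemma}); your identification of the leading mean $\Tr\lb\widetilde\Sigma_0\widetilde\Sigma_1\rb$ from $\E\widehat\Sigma_1=A_1A_0^*$, of the $(\nu_4-3)$ diagonal corrections, and of the null-case consistency check is the same accounting. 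What your route buys is self-containedness: Cram\'er--Wold plus a martingale difference decomposition along $\mathcal F_t=\sigma(\z_0,\dots,\z_t)$ avoids importing the external moment-method proposition, and it handles the one-dependence of $\x_t$ directly. What it costs is that the step you label ``routine'' is where the real probabilistic work sits: with only $\E z_{it}^4=\nu_4<\infty$, proving convergence in probability of the summed conditional variances (which are themselves random quartic forms in high-dimensional vectors, so their concentration formally involves sixth- and eighth-order quantities) and the conditional Lindeberg condition requires a truncation of the $z_{it}$ at the usual $o(\sqrt T)$ scale together with a verification that the truncation does not distort the $\nu_4$-dependent terms in $\sigma_G^2$, $\sigma_S^2$, $\sigma_{GS}$. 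That is feasible but substantially more than a remark, and it is precisely the work the paper sidesteps by invoking Proposition \ref{prop:asyNormal}; with that caveat made explicit, your plan would deliver the theorem.
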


The proof of this theorem is relegated to
Section~\ref{sec:proofs}. Similarly, applying
Theorem~\ref{Thm:jointH1} to the decomposition~\eqref{eq:decomp}, the
following proposition  establishes the asymptotic distribution of our
test statistic $G_{q,1}$ under the VMA(1) alternative~\eqref{eq:H1} when $q=1$. 

\begin{proposition}\label{prop:H1}
	Assume the same conditions as in Theorem~\ref{Thm:jointH1}, when  $\x_t=A_0\z_t+A_1\z_{t-1}$ and the observables are real-valued, we have
	\begin{equation}\label{eq:G11H1}
	\sigma_{G_{1,1}}^{-1}\lb G_1-Tc_p^2\hat{s}_1^2-\mu_{G_{1,1}}\rb\xrightarrow{d} \EN\lb 0,~ 1\rb,
	\end{equation}
	where
	\begin{align*}
	\mu_{G_{1,1}}=&\Tr\lb\widetilde{\Sigma}_0\widetilde{\Sigma}_1\rb+\frac{2}{T}\Tr^2\lb\widetilde{\Sigma}_{01}\rb+\frac{1}{T}\left[ \Tr\lb\widetilde{\Sigma}_0\widetilde{\Sigma}_1\rb+(\nu_4-3)\Tr\lb D(\widetilde{\Sigma}_0)D(\widetilde{\Sigma}_1)\rb\right]\\ &
-\frac{4}{T^2}\Tr\lb\widetilde{\Sigma}_{01}\widetilde{\Sigma}_{01}^*\rb	-\frac{1}{T^2}\left[2\Tr\lb\widetilde{\Sigma}_0+\widetilde{\Sigma}_1\rb^2+(\nu_4-3)\Tr\lb D^2(\widetilde{\Sigma}_0+\widetilde{\Sigma}_1)\rb\right],
\end{align*}
{\small
\begin{align*}
~&\sigma_{G_{1,1}}^2=\frac{2}{T^2}\Tr^2\lb \widetilde{\Sigma}_0^2+\widetilde{\Sigma}_1^2\rb+\frac{4}{T}\left[ 2\Tr\lb \widetilde{\Sigma}_0\widetilde{\Sigma}_1\rb^2+(\nu_4-3)\Tr\lb D^2\lb \widetilde{\Sigma}_0\widetilde{\Sigma}_1\rb\rb\right]\\ 
	+&\frac{6}{T^2}\Tr^2\lb\widetilde{\Sigma}_0\widetilde{\Sigma}_1 \rb+\frac{8}{T^2}\Tr\lb \widetilde{\Sigma}_{01}\widetilde{\Sigma}_{01}^*\rb\Tr\lb\widetilde{\Sigma}_0^2+\widetilde{\Sigma}_1^2\rb+\frac{16}{T^2}\Tr\lb \widetilde{\Sigma}_{01}\widetilde{\Sigma}_1\rb\Tr\lb \widetilde{\Sigma}_{01}\widetilde{\Sigma}_0\rb\\ 
	+&\frac{16}{T^2}\Tr(\widetilde{\Sigma}_{01})\left[\Tr\lb\widetilde{\Sigma}_0^2\widetilde{\Sigma}_{01}^*\rb+\Tr\lb\widetilde{\Sigma}_1^2\widetilde{\Sigma}_{01}\rb+2\Tr\lb \widetilde{\Sigma}_1\widetilde{\Sigma}_{01}\widetilde{\Sigma}_0\rb\right]\\ 
+&\frac{4}{T}\Tr\lb \widetilde{\Sigma}_{01}^*\widetilde{\Sigma}_{01}\widetilde{\Sigma}_0^2+\widetilde{\Sigma}_{01}\widetilde{\Sigma}_{01}^*\widetilde{\Sigma}_1^2+2\widetilde{\Sigma}_{01}^*\widetilde{\Sigma}_1\widetilde{\Sigma}_{01}\widetilde{\Sigma}_0\rb+\frac{16}{T^3}\Tr^2(\widetilde{\Sigma}_{01})\Tr\lb \widetilde{\Sigma}_0+\widetilde{\Sigma}_1\rb^2\\ 
+&\frac{4}{T}\Tr\lb\widetilde{\Sigma}_{01}\widetilde{\Sigma}_{01}^*\widetilde{\Sigma}_{01}^*\widetilde{\Sigma}_{01}\rb+\frac{12}{T^2}\Tr^2\lb\widetilde{\Sigma}_{01}\widetilde{\Sigma}_{01}^*\rb+\frac{16}{T^2}\Tr(\widetilde{\Sigma}_{01})\Tr\lb\widetilde{\Sigma}_{01}\widetilde{\Sigma}_{01}^*\widetilde{\Sigma}_{01}^* \rb\\ 
+&\frac{16}{T^3}\Tr^2\lb \widetilde{\Sigma}_{01}\rb \left[ \Tr\lb\widetilde{\Sigma}_{01}\rb^2+2\Tr\lb\widetilde{\Sigma}_{01}\widetilde{\Sigma}_{01}^*\rb+(\nu_4-3)\Tr\lb D^2\lb \widetilde{\Sigma}_{01}\rb\rb\right]+\frac{8}{T^2}\Tr^2\lb \widetilde{\Sigma}_1 \widetilde{\Sigma}_{01}\rb\\ 
+&\frac{8}{T^2}\Tr^2\lb \widetilde{\Sigma}_0
  \widetilde{\Sigma}_{01}\rb+\frac{16}{T^2}\Tr\lb\widetilde{\Sigma}_{01}\rb\left[2\Tr\lb\widetilde{\Sigma}_0\widetilde{\Sigma}_1\widetilde{\Sigma}_{01}\rb+(\nu_4-3)\Tr\lb
  D\lb\widetilde{\Sigma}_0\widetilde{\Sigma}_1 \rb D\lb
  \widetilde{\Sigma}_{01}\rb\rb\right]+R_n.	
\end{align*}}
Here $R_n$ represents  a remainder  of  smaller order than  the other terms listed  in $\sigma^2_{G_{1,1}}$ .
\end{proposition}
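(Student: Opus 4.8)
The plan is to view the feasible statistic $G_{1,1}$ as a single fixed linear functional of the bivariate vector whose joint asymptotic normality is already furnished by Theorem~\ref{Thm:jointH1}, and then simply read off its limiting mean and variance. Specializing the decomposition~\eqref{eq:decomp} to $q=1$ gives
\[
G_{1,1}=G_1-Tc_p^2\hat{s}_1^2=(1,-1)\lb G_1,\ Tc_p^2\hat{s}_1^2\rb^{\top}.
\]
By Theorem~\ref{Thm:jointH1} the centered pair $\lb G_1-\mu_G,\ Tc_p^2\hat{s}_1^2-\mu_S\rb^{\top}$ converges in distribution to a centered bivariate Gaussian whose covariance entries are $\sigma_G^2$, $\sigma_{GS}$ and $\sigma_S^2$. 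Since any fixed linear combination $aX+bY$ of an asymptotically jointly normal pair $(X,Y)$ is itself asymptotically normal with limiting variance $a^2\sigma_X^2+2ab\,\sigma_{XY}+b^2\sigma_Y^2$, taking $(a,b)=(1,-1)$ yields at once
\[
G_{1,1}-(\mu_G-\mu_S)\xrightarrow{d}\EN\lb 0,\ \sigma_G^2+\sigma_S^2-2\sigma_{GS}\rb.
\]
The entire proof therefore reduces to the two algebraic identities $\mu_{G_{1,1}}=\mu_G-\mu_S$ and $\sigma_{G_{1,1}}^2=\sigma_G^2+\sigma_S^2-2\sigma_{GS}$.

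The mean identity is immediate. Subtracting $\mu_S$ from $\mu_G$, the common leading term $T^{-1}\Tr^2\lb\widetilde{\Sigma}_0+\widetilde{\Sigma}_1\rb$ cancels, and what survives is precisely the listed $\mu_{G_{1,1}}$: the $\Tr\lb\widetilde{\Sigma}_0\widetilde{\Sigma}_1\rb$ and $2T^{-1}\Tr^2\lb\widetilde{\Sigma}_{01}\rb$ contributions together with the $T^{-1}$ and $T^{-2}$ correction brackets carried over with their signs from $\mu_G$ and $\mu_S$. No further cancellation occurs, so the identity holds term by term.

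For the variance I would carry out the same bookkeeping, which is where the only genuine labor lies. The key structural observation is that \emph{every} term of $\sigma_S^2$ and \emph{every} term of $\sigma_{GS}$ also occurs in $\sigma_G^2$ as a matching monomial in the traces of $\widetilde{\Sigma}_0,\widetilde{\Sigma}_1,\widetilde{\Sigma}_{01}$; the combination $\sigma_G^2+\sigma_S^2-2\sigma_{GS}$ then annihilates each shared contribution because the coefficients satisfy $c_G+c_S-2c_{GS}=0$. For the two order-$T^{-3}$ monomials present in all three (those built on $\Tr^2\lb\widetilde{\Sigma}_0+\widetilde{\Sigma}_1\rb$ times a quadratic and times $\Tr\lb\widetilde{\Sigma}_{01}\widetilde{\Sigma}_{01}^*\rb$) one checks $4+4-2\cdot4=0$ and $16+16-2\cdot16=0$; for the cross monomials shared by $\sigma_G^2$ and $\sigma_{GS}$ only (absent from $\sigma_S^2$) one checks $8+0-2\cdot4=0$, $16+0-2\cdot8=0$ and $32+0-2\cdot16=0$. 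After these cancellations the survivors are exactly the remaining summands of $\sigma_G^2$ — those involving $\Tr^2\lb\widetilde{\Sigma}_0^2+\widetilde{\Sigma}_1^2\rb$, $\Tr^2\lb\widetilde{\Sigma}_0\widetilde{\Sigma}_1\rb$, the quartic traces in $\widetilde{\Sigma}_{01}$, and so on — which reproduce the stated $\sigma_{G_{1,1}}^2$ verbatim, with the $R_n$ of smaller order absorbed unchanged. The main obstacle is thus purely the volume of this coefficient matching; no analytic input beyond Theorem~\ref{Thm:jointH1} is required.
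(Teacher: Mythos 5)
Your proposal is correct and follows essentially the same route as the paper: the paper likewise obtains Proposition~\ref{prop:H1} by applying Theorem~\ref{Thm:jointH1} to the decomposition~\eqref{eq:decomp} with $q=1$, so that $G_{1,1}=G_1-Tc_p^2\hat s_1^2$ inherits asymptotic normality with mean $\mu_G-\mu_S$ and variance $\sigma_G^2+\sigma_S^2-2\sigma_{GS}$. Your coefficient checks (e.g.\ $4+4-2\cdot 4=0$, $16+16-2\cdot16=0$, $8-2\cdot4=0$, $16-2\cdot8=0$, $32-2\cdot16=0$) do reproduce the stated $\mu_{G_{1,1}}$ and $\sigma_{G_{1,1}}^2$, which is exactly the bookkeeping implicit in the paper's derivation.
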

Notice that if $A_1={\bf 0}$, $\widetilde\Sigma_1=0$  and
$\widetilde\Sigma_{01}=0$. Then Theorem~\ref{Thm:jointH1} and
Proposition~\ref{prop:H1} reduce to Theorem~\ref{Thm:s1} and
Proposition~\ref{prop:Gq1}, respectively. 

Acturally, under the VMA(1) alternative \eqref{eq:H1} with $q=1$, we have almost surely, $\tilde{\xi}=\sqrt{2} c_p\tilde{s}_2\rightarrow \xi_0$ as $p,T\rightarrow \infty$, where
\begin{equation}\label{eq:xiH1}
\xi_0=\lim_{T\rightarrow\infty}\sqrt{2}\left[\frac{1}{T}\Tr\lb \widetilde{\Sigma}_0^2+\widetilde{\Sigma}_1^2\rb+\frac{2}{T}\Tr\lb \widetilde{\Sigma}_{01}\widetilde{\Sigma}_{01}^*\rb+\frac{2}{T^2}\Tr^2\lb\widetilde{\Sigma}_{01}\rb\right].
\end{equation}
With Proposition~\ref{prop:Gq1} and \ref{prop:H1}, the power function
of the test  \eqref{eq:Gq1test0}  is then easily derived. 

\begin{proposition}\label{prop:power}
	Assume the same conditions as in Theorem~\ref{Thm:jointH1}, then under $H_1:~\x_t=A_0\z_t+A_1\z_{t-1}$,  $ \mbox{as } p,T\rightarrow\infty,$ the power function 
	\[\beta_{\alpha}=Pr\lb\left. G_1- Tc_p^2\hat{s}_1^2> Z_\alpha\tilde{\xi}~\right\rvert H_1\rb\rightarrow Pr\lb Z>Z_{\alpha}\frac{\xi_0}{\widetilde{\sigma}_{G_{1,1}}}-\frac{\widetilde{\mu}_{G_{1,1}}}{\widetilde{\sigma}_{G_{1,1}}} \rb,
	\]	
	where $Z$ represents a standard normal random variable, $Z_\alpha$ is the upper-$\alpha$ quantile of the standard normal
	distribution, $\widetilde{\mu}_{G_{1,1}}$ and $\widetilde{\sigma}_{G_{1,1}}$ are limits of $\mu_{G_{1,1}}$ and $\sigma_{G_{1,1}}$ as $T\rightarrow \infty$.
\end{proposition}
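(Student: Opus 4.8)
The plan is to obtain the power as an immediate corollary of the joint central limit theorem for $G_{1,1}=G_1-Tc_p^2\hat s_1^2$ under the alternative, established in Proposition~\ref{prop:H1}, combined with the almost sure convergence of the random critical value $\tilde\xi$ recorded in \eqref{eq:xiH1}. Once these two ingredients are in place, the conclusion is a single application of Slutsky's theorem, and no further random matrix computation is required.

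First I would rewrite the rejection region of the feasible test \eqref{eq:Gq1test0} (with $q=1$) in standardized form. Since the test rejects when $G_{1,1}>Z_\alpha\tilde\xi$, subtracting $\mu_{G_{1,1}}$ and dividing by $\sigma_{G_{1,1}}$ on both sides yields
\[
\beta_\alpha=Pr\lb G_{1,1}>Z_\alpha\tilde\xi\mid H_1\rb
=Pr\lb W_n>c_n\mid H_1\rb,\qquad
W_n:=\frac{G_{1,1}-\mu_{G_{1,1}}}{\sigma_{G_{1,1}}},\quad
c_n:=\frac{Z_\alpha\tilde\xi-\mu_{G_{1,1}}}{\sigma_{G_{1,1}}}.
\]
By Proposition~\ref{prop:H1}, $W_n\xrightarrow{d}Z$ under $H_1$, where $Z\sim\EN(0,1)$.

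Next I would pin down the deterministic limit of the threshold $c_n$. The numbers $\mu_{G_{1,1}}$ and $\sigma_{G_{1,1}}$ are nonrandom, and the existence of all the normalized trace limits $T^{-1}\Tr(\widetilde\Sigma_0^i\widetilde\Sigma_1^j\widetilde\Sigma_{01}^k)$ assumed in Theorem~\ref{Thm:jointH1} guarantees that they converge to $\tilde\mu_{G_{1,1}}$ and $\tilde\sigma_{G_{1,1}}$ as $T\to\infty$; a direct inspection of the displayed expressions shows that $\tilde\sigma_{G_{1,1}}$ is finite and positive. Simultaneously, $\tilde\xi=\sqrt2\,c_p\tilde s_2\to\xi_0$ almost surely by \eqref{eq:xiH1}. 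Hence $c_n$ converges almost surely, and so in probability, to the constant $c:=Z_\alpha\xi_0/\tilde\sigma_{G_{1,1}}-\tilde\mu_{G_{1,1}}/\tilde\sigma_{G_{1,1}}$.

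Finally I would invoke Slutsky's theorem: since $W_n\xrightarrow{d}Z$ and $c_n\xrightarrow{p}c$, the pair $(W_n,c_n)$ converges jointly to $(Z,c)$, so $W_n-c_n\xrightarrow{d}Z-c$, and because the law of $Z-c$ has no atom at $0$,
\[
\beta_\alpha=Pr\lb W_n-c_n>0\mid H_1\rb\longrightarrow Pr\lb Z-c>0\rb
=Pr\lb Z>Z_\alpha\frac{\xi_0}{\tilde\sigma_{G_{1,1}}}-\frac{\tilde\mu_{G_{1,1}}}{\tilde\sigma_{G_{1,1}}}\rb,
\]
which is the claim. The step I would treat most carefully---and the main obstacle---is the behaviour of the mean $\mu_{G_{1,1}}$, whose leading term is $\Tr(\widetilde\Sigma_0\widetilde\Sigma_1)$: for a fixed VMA(1) alternative with $\Tr(\widetilde\Sigma_0\widetilde\Sigma_1)$ of exact order $T$ one has $\tilde\mu_{G_{1,1}}=+\infty$, the limiting threshold is $-\infty$, and the formula reads $\beta_\alpha\to1$, i.e.\ the test is consistent. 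This degenerate case is still covered by the same argument after a routine truncation of $c_n$ (for every finite $M$, eventually $c_n<-M$, so $\liminf\beta_\alpha\ge Pr(Z>-M)\uparrow 1$). The displayed nondegenerate expression is thus informative precisely along sequences of local alternatives for which $\Tr(\widetilde\Sigma_0\widetilde\Sigma_1)$ stays bounded, keeping $\tilde\mu_{G_{1,1}}$ finite.
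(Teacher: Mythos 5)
Your proposal is correct and follows essentially the same route the paper takes: the paper derives the power directly from Proposition~\ref{prop:H1} (the CLT for $G_{1,1}$ under the VMA(1) alternative) together with the almost sure convergence $\tilde\xi\to\xi_0$ in \eqref{eq:xiH1}, which is exactly your standardization-plus-Slutsky argument. Your extra remark on the degenerate case where $\Tr\lb\widetilde{\Sigma}_0\widetilde{\Sigma}_1\rb$ diverges matches the paper's subsequent Case~1/Case~2 discussion and is a sensible clarification rather than a deviation.
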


In fact, under $H_1$, when $\widetilde{\Sigma}_0$ and
$\widetilde{\Sigma}_1$ have  bounded spectral norm, both $\widetilde{\sigma}_{G_{1,1}}$  and $\xi_0$ are of order $O(1)$  and $0<\frac{\xi_0}{\widetilde{\sigma}_{G_{1,1}}}\leq 1$, while the leading order term of $\widetilde{\mu}_{G_{1,1}}$ is $$\lim_{T\rightarrow \infty} \Tr\lb \widetilde{\Sigma}_0 \widetilde{\Sigma}_1\rb=\lim_{T\rightarrow \infty} \Tr\lb A_1A_0^*A_0A_1^*\rb>0.$$
Consequently, 
\begin{itemize}
	\item[Case 1.] If $ \Tr\lb \widetilde{\Sigma}_0 \widetilde{\Sigma}_1\rb$ diverges as $T\rightarrow\infty$, then the power function $\beta_{\alpha}\rightarrow 1$ ;
	\item[Case 2.]  If $\Tr\lb \widetilde{\Sigma}_0
      \widetilde{\Sigma}_1\rb$ is of order $\Omega(1)$ (bounded from below and above),  then the power function
      $\beta_\alpha$ converges to the constant \\
      $\beta=Pr\lb
      Z>Z_{\alpha}\frac{\xi_0}{\widetilde{\sigma}_{G_{1,1}}}-\frac{\widetilde{\mu}_{G_{1,1}}}{\widetilde{\sigma}_{G_{1,1}}}\rb$ and $\alpha\le \beta\le 1$.
\end{itemize}

Therefore, as expected the asymptoic power of the  test
~\eqref{eq:Gq1test0} under the VMA(1) alternative \eqref{eq:H1}
depends on the eigenstructure of the coefficient matrix $A_1$.
{ To illustrate, assume that 
  (i) $A_0A_0^*$ is of rank $r_{0p} \sim r p$ for some constant $0
  <r\le 1$;  (ii) $A_0A_0^*$ is of rank $1  \ll r_{1p} \ll p$, for example
  $r_{1p}\sim r' \log p $ for some constant $r'>0$, and that the non-null
  eigenvalues of both matrices are of order $\Omega(1)$. Then $
  \Tr\lb \widetilde{\Sigma}_0 \widetilde{\Sigma}_1\rb \sim r'' r_{1p} \to
  \infty$ for some constant $r''>0$, and the asymptotic power is equal to 1 (Case 1).
  If instead,  $r_{1p} = \Omega(1) $, then the asymptotic power can be
  smaller than 1 (Case 2).  Both situations correspond to a low-rank
  alternative for $A_1$,  with exploding ranks in Case 1  and constant
  order ranks in
  Case 2.

Finally, as here the alternative is a VMA(1), one would expect that
$G_{q,1}$ with $q>1$ might have  smaller power than $G_{1,1}$. This is 
indeed true because  $\widetilde{\mu}_{G_{q,1}}$ remains the same with $\widetilde{\mu}_{G_{1,1}}$ under $H_1$,  while $\widetilde{\sigma}_{G_{q,1}}$ is larger than $\widetilde{\sigma}_{G_{1,1}}$ and $\xi_0$ increases with $q$ as well.
}

\section{Simulation experiments}\label{sec:simul}

Most of the experiments of this section are designed
in order to  compare the test procedures in \eqref{eq:Gqtest} and \eqref{eq:Gq1test0} based
on the statistics $G_q$ and $G_{q,1}$,
with two well known classical white noise tests, namely the Hosking test
\citep{Hosking80} and the Li-McLeod test \citep{Li81}.

To introduce the Hosking and  Li-McLeod tests and using  their notations,
consider a $p$-dimensional VARMA($u,v$) process of the form
\[\x_t-\Phi_1\x_{t-1}-\cdots-\Phi_u\x_{t-u}=\a_t-\Theta_1\a_{t-1}-\cdots-\Theta_v\a_{t-v},\]
where $\a_t$ is a $p-$dimensional white noise with mean zero and
variance $\Sigma$. Since $\x_t$ is observed, with an initial guess of
$u$ and $v$, by assuming $\a_t$ to be Gaussian, estimation of
parameters $\{\Phi,~\Theta\}$ is conducted by the method of maximum likelihood. The
initial estimates of $u$ and $v$ are further refined at the diagnostic
checking stage based on the autocovariance  matrices $\hat{C}_\tau$ of  the
residuals $\{\hat{\a}_t\}$:
\[\hat{C}_\tau=\frac 1T\sum_{t=\tau+1}^T \hat{\a}_t\hat{\a}_{t-\tau}^*,  \quad, \tau=0,1,2,\ldots.
\]
\citet{Hosking80} proposed the portmanteau statistic
\[\widetilde{Q}_{q}=T^2\sum_{\tau=1}^q \frac{1}{T-\tau}\mathrm{Tr}\lb\hat{C}_\tau^*\hat{C}_0^{-1}\hat{C}_\tau\hat{C}_0^{-1}\rb,\]
while
\citet{Li81} recommended  the use of the statistic
\[Q_q^*=T\sum_{\tau=1}^q \mathrm{Tr}\lb\hat{C}_\tau^*\hat{C}_0^{-1}\hat{C}_\tau\hat{C}_0^{-1}\rb+\frac{p^2q(q+1)}{2T}.\]
When $\{\x_t\}$ follows a  VARMA$(u,v)$ model, both $\widetilde{Q}_q$ and $Q_q^*$ converge to $\chi^2(p^2(q-u-v))$ distribution as $T\rightarrow \infty$, while the dimension $p$ remains fixed.

To compare with our multi-lag $q$ test statistics $G_q$ and $G_{q,1}$ when $\Sigma_0$ is either known or unknown, we set $u=v=0$.
 All tests use 5\% significance level and the critical regions of the three tests are as follows:
\gai{
 \begin{itemize}
 \item[(i)]$G_q$ when all the limiting parameters are  known as defined in   \eqref{eq:Gqtest} with $\alpha=5\%$;
 \item[(ii)]$G_{q,1}$ with estimated limiting parameters  as defined in   \eqref{eq:Gq1test0} with $\alpha=5\%$;
 \item[(iii)] Hosking  test: \quad
   $\displaystyle\left\{~\widetilde{Q}_{q}>\chi^2_{0.05, ~q p^2} ~\right\}$;
 \item[(iv)] Li-McLeod test: \quad
   $\displaystyle\left\{~Q_q^*>\chi^2_{0.05,~q p^2} ~\right\}$.
 \end{itemize}
}
\noindent Here $Z_{0.05}$ and $\chi^2_{0.05,~m}$ denote the upper-5\% quantile  of the standard normal distribution and the chi-squared
distribution with degrees of freedom $m$, respectively.
Empirical statistics are obtained  using 2000 independent
replicates. 

\subsection{Empirical sizes}\label{subsec:empiricalsize}

The data is generated as $\x_t=\Sigma_0^{1/2}\z_t$, with $\z_{t}$, $t=1,\cdots, T$ being independent and identically distributed. We adopt diverse settings for $\z_t$ and $\Sigma_0$ respectively to compare the sizes of four test statistics.

As for $\z_t$, we use two models to represent different distributions for $\z_t$:
\begin{itemize}
	\item [(I)]  $\z_t\sim \N_p({\bf 0},{\bf I}_p), i.i.d. ~t=1,\cdots, T$;
	\item[(II)] $\z_t$ with i.i.d. components $z_{it}\sim Gamma(4,0.5)-2, ~i=1,\cdots,p,~t=1,\cdots, T$, $\E(z_{it})=0$, $\var(z_{it})=1$, $\nu_4(z_{it})=4.5$;
\end{itemize}

As for $\Sigma_0$, we use two different models as follows.

\begin{itemize}
	\item[(III)] $\Sigma_0={\bf I}_p$;
	\item[(IV)] $\Sigma_0=\frac{4}{p}A_0A_0^*$, $A_0$ is $p\times p$ matrix with entries $a_{ij} ~ \sim U(-1,1)\text{ i.i.d..}$
\end{itemize}

Table \ref{Tab:cLSizeCom} compares the sizes of the four tests for two different $q$ when $\Sigma_0={\bf I}_p$. Cases when $p>T$ are not considered here since $\widetilde{Q}_q$ and $Q_q^*$ are not applicable then.

The main information from Table \ref{Tab:cLSizeCom} is that classical test procedures derived using large sample scheme, namely
by letting the sample size $T\rightarrow\infty$ while the dimension $p$ remains fixed, are heavily biased when the dimension $p$ is in fact
not negligible with respect to the sample size. To be more precise, these biases are clearly present when the dimension-to-sample ratio
$p/T$ is not ``small enough", say greater than 0.1. Such high-dimensional traps for classical procedures have already been reported
in other testing problems, see for example \cite{Bai09}  and \cite{WY13}.
Here we observe that the empirical sizes of the Hosking and the Li-McLeod
tests quickly degenerate to 0 as the ratio $p/T$ increases from 0.1 to
0.5. In other words, the critical values from their  $\chi^2_{q p^2}$
asymptotic limits are seemingly  {\em too large}. On the other hand, the statistics $G_q$ and $G_{q,1}$ have reasonable sizes when compared to the 5\% nominal level across all the tested $(p,T)$ combinations. Various $(p,T)$ combinations are accommodated to testify the adaptability of our test statistics, $G_q$ and $G_{q,1}$. Test sizes in both high and low dimension cases are shown in Table \ref{Tab:HDSize}. It can be seen that both $G_q$ and $G_{q,1}$ attain the nominal level accurately under various scenarios.

\subsection{Empirical powers and adjusted powers}\label{subsec:powercomparison}
In this section, we compare the empirical powers of the tests by
assuming  that $\x_t=\Sigma_0^{1/2}\y_t$, $\y_t$ follows a vector autoregressive process of order
1,
$$\x_t=\Sigma_0^{1/2}\y_t,~\y_t=A\y_{t-1}+\z_t,$$
where $A=a {\bf I}_p$, $\z_{t}\sim N_p({\bf 0},{\bf I}_p)$ being
independent of each other for $t=1,\cdots, T$. 
{First we check the power of two classic test procedures,
  $\widetilde{Q}_q$ and $Q^*_q$. Table~A 
  in the supplemental paper~\cite{supp} gives these empirical powers for
  $a=0.1$ and various combinations $(p,T)$.
}

From Table \ref{Tab:cLSizeCom} we know that the two classic tests
become seriously biased when the dimension $p$ is large compared to
the sample size $T$. Their sizes approach zero when $p/T$ becomes
larger. From Table~A 
of \cite{supp}, we see that due to such biased critical values used in $\widetilde{Q}_{q}$ and $Q_q^*$, their powers are driven downward. This is particularly severe when the ratio $p/T$ is larger than 0.5.

To explore more these two traditional tests, 
we also examine their 
{\em intrinsic powers}  when $\Sigma_0=I_p$. Namely, we
empirically find the 95 percentiles of
$\widetilde{Q}_{q}$ and $Q_q^*$ \ggai{under the null} and use these
values as the corrected critical value for the power comparison.
Empirical values are reported in Table~B of the supplemental paper~\cite{supp}. 
It is interesting to observe
that after such
correction, both $\widetilde{Q}_{q}$ and $Q_q^*$ show
very reasonable powers which
all increase to 1 when the
dimension and the sample size increases.Our test statistics $G_q$ and $G_{q,1}$ also maintain  comparably high power in all the tested
$(p,T)$ combinations. Table \ref{Tab:HDPower}  demonstrates the
feasibility of our test statistics under both high and low dimension
cases.
Interestingly enough, $G_{q,1}$ shows  slightly better power 
than $G_q$ under the present AR(1) alternative which is not
intuitive. 
Comparison with the Hosking and the Li-McLeod tests sheds new light on the superiority of our test statistics in both low and high dimensional cases.

%
\subsection{Why both the Hosking and the Li-Mcleod tests fail in high dimension}

The experiments here are designed to explore the reasons behind the
failure of the Hosking and the Li-McLeod tests in high dimension.
For the  test statistics
$\widetilde{Q}_q$ and  $Q_q^*$ as well as our test statistic $\phi_\tau$,
we  consider  their empirical mean, variance and the  95\% quantile,
say $\theta_{emp}$,
with their theoretical values predicted by their respective asymptotic
distributions (denoted as $\theta_{theo}$).
 As for the two classical tests,
we have often observed very large discrepancy between these values  so it
is more convenient to report the corresponding relative errors
$
  (\theta_{theo} -\theta_{emp})/ {\theta_{emp}}
$
(in percentage).
Empirical values are reported in 
Table~C  of the supplemental paper~\cite{supp}.
It clearly appears from this table that for both
statistics $\widetilde{Q}_q$ and  $Q_q^*$,
the  traditional asymptotic theory
severely overestimated their variances, that is
their empirical  means
are close to the degree of freedom $p^2(q-u-v)$ of the asymptotic
chi-squared distribution
while
their empirical variances are much smaller than  $2p^2(q-u-v)$
as suggested by the same  chi-squared limit.
This leads to an  inflated  95th  percentiles which,  although in a  lesser
proportion,  is enough to create a high downward-bias in the empirical
sizes of these two classical tests with high-dimensional data; See Table~\ref{Tab:cLSizeCom}.

\subsection{Comparison with other test statistics}\label{subsec:comp}

In this section, we compare our test statistics with some others in recent literature. \citet{CYZ17} proposed an omnibus test for vector white noise using the maximum absolute autocorrelations and cross-correlations  of the component series. Let
\[\hat{\Gamma}(k)=\{\hat{\rho}_{ij}(k)\}_{1\leq i,j\leq p}=\text{diag} \{\hat{\Sigma}(0)\}^{-1/2}\hat{\Sigma}(k)\text{diag} \{\hat{\Sigma}(0)\}^{-1/2}\]
be the sample autocorrelation matrix at lag $k$, where $\hat{\Sigma}(k)=\frac{1}{T}\sum_{t=1}^{T-k}\x_{t+k}\x_t^*$. Their test statistic $T_n$ is defined as
\[T_n=\max_{1\leq k\leq q}T_{n,k},\]
where $T_{n,k}=\max_{1\leq i,j\leq p} T^{1/2}|\hat{\rho}_{ij}(k)|$. Another test statistic $T_n^*$ is defined in the same manner as $T_n$, only that the time series principal component analysis proposed by \citet{ChangGuoYao_2015} is applied to the data $\{\x_t\}$ first.

Here we fix $p=20,~ T=100$ and adopt the spherical AR(1) process for power comparison, i.e. $\x_t=\Sigma_0^{1/2}\y_t,~\y_t=A\y_{t-1}+\z_t$, $A=a{\bf I}_p$, where $\z_t$ and $\Sigma_0$ follow different combinations of settings.
Power values of all the five test statistics, i.e. $G_q$, $G_{q,1}$,
$G_{q,1}^*$, $T_n$ and $T_n^*$, are compared when VAR coefficient $a$
grows from 0 to 0.5. Here $G_{q,1}^*$ is our test statistic with finte
sample correction as demonstrated in \eqref{eq:Gq1test}.  Empirical
statistics are obtained using 2000 independent replicates. Results are
shown in Fig.~\ref{Fig:SF2}. Notice that on these 
displays, $G_{q,1}$ and $G_{q,1}^*$ coincide almost everywhere showing
a high accuracy of the parameter estimates  used in $G_{q,1}^*$.

 It can be seen that our test statistics show better performance under this spherical AR(1) model setting. Designed via Frobenius norm of sample autocovariance matrices, the strength of our test statistics are fully demonstrated in such VAR(1) settings. While $T_n$ and $T_n^*$ are more adapted to settings where majority coordinates of the test sequence $\x_t$ or their linear transformations remain to be white noise, see the model settings in \citet{CYZ17}. Moreover, it can be seen that test size of $T_n$ is a little biased when $p=20,~T=100$. Actually, such bias appears to be more significant when we increase the dimension-to-sample ratio $p/T$ to a relative higher level, say $0.5$. On the contrary, our test statistics maintain the nominal level accurately in both low and high dimensional settings. $T_n^*$ shows very resilient powerful performance while it is quite time-consuming due to its relatively complicated bootstrap procedures. All in all, our test statistics $G_q$, $G_{q,1}$ and $G_{q,1}^*$ provide very satisfactory alternatives for high dimensional diagnostic checking.

{
\subsection{Performance under VMA(1) model}\label{subsec:VMAComp}
In this section we compare performance of the tests when $\x_t$ follows a vector moving average process of order 1, i.e.
\[\x_t=A_0\z_t+A_1\z_{t-1}.\]
We use different settings for $\z_t$ and $A_0,A_1$ respectively to compare our test statistic $G_q$ as defined in \eqref{eq:Gqtest} and $G_{q,1}$ in \eqref{eq:Gq1test0} under nominal level $\alpha=5\%$.

As for $\z_t$, we use the same two models as defined in (I) and (II) in Section \ref{subsec:empiricalsize}. As for $A_0$ and $A_1$, we use two different models as follows.
\begin{itemize}
	\item[(V)] $A_0={\bf I}_p$ and   $A_1=a{\bf I}_p$, $0<a<1$. 
	\item[(VI)] $A_0={\bf I}_p$ and  for $0<r<1$, take $d=[pr]$, here $[\cdot]$ means to take the closest integer to the given value. $A_1=\lb \frac{4}{p}E_0E_0^*\rb^{1/2}$, where $E_0$ is $p\times d$ matrix with entries $e_{ij}\sim U(-1,1)$ i.i.d., thus $\mbox{rank}(A_1)\leq d<p$.
\end{itemize}

To evaluate the performance of our test statistics $G_q$ and $G_{q,1}$ under VMA(1) models, we assign $a=0.07$ and  $r=0.01$, $d=\max(1,[pr])$
respectively for Scenario (V) and (VI). Testing power of $G_q$ and $G_{q,1}$ are shown in Table~\ref{Tab:VMAPower} for $q=1$ under various $(p,T)$ combinations. The asymptotic power $\beta(G_{1,1})$ of the test statistic $G_{1,1}$ derived in Proposition \ref{prop:power} are also listed for comparison. All empirical results are obtained using 2000 independent replicates.

Similarly as in Section \ref{subsec:comp}, we further compare our test statistics with others, i.e. $T_n$ and $T_n^*$ in \citet{CYZ17} under the VMA(1) settings. Here we fix $p=20, ~T=100$ and let $\x_t=A_0\z_t+A_1\z_{t_1}$ where $A_1$ follows model (V) or (VI) and $\z_t$ is either Gaussian or Non-Gaussian.  Power values of all the five test statistics, i.e. $G_q$, $G_{q,1}$,
$G_{q,1}^*$, $T_n$ and $T_n^*$, are compared under model (V) and (VI)
separately. Figure~\ref{Fig:SF4} shows the results under model (V)
when coefficient $a$ of $A_1$  grows from 0 to
0.5 (top rows), and for model (VI) when parameter $r$ varies
from 0 to 0.5 (bottom rows). All results  are based on 2000 independent
experiments.

From Table~\ref{Tab:VMAPower}, it can be seen that our test statistics
$G_1$ and $G_{1,1}$ consistently show reasonable powers for various
$(p,T)$ combinations under both VMA(1) model settings. Especially
$G_{1,1}$ performs surprisingly well under VMA model (VI) even when
$d~(\mbox{rank}(A_1))$ is very small.  Meanwhile the empirical power
of $G_{1,1}$ is consistent with the asymptotic values $\beta(G_{1,1})$
derived in Proposition~\ref{prop:power}.  As for comparison with $T_n$
and $T_n^*$ in Figure~\ref{Fig:SF4}, our test 
statistics in general show better performance under VMA(1) model
settings.  The test sizes of $T_n$ and $T_n^*$ are a little biased
when $p=20,~T=100$, especially for non-Gaussian cases. While our test
statistics  maintain the nominal level accurately and uphold higher
detection power even when the signals are relatively weak.

}
\section{Proofs}\label{sec:proofs}

\subsection{Proof of Theorem \ref{MainThmMult}}

To derive the null distribution of $G_q$ when  $\x_t=\Sigma_0^{1/2}\z_t$, we looked into the Free probability and moment method proposed by \citet{BB16}. In Section 4.2.3 of \citet{BB16}, they have proved the following result:

\begin{proposition}\label{prop:asyNormal}
Let $\Pi:=\Pi(\widehat{\Sigma}_{\tau},\widehat{\Sigma}_{\tau}^*:\tau\geq0)$ be a symmetric polynomial in $\{\widehat{\Sigma}_{\tau},\widehat{\Sigma}_{\tau}^*:\tau\geq 0\}$, $$\sigma^2_{\Pi}=\lim \E(\Tr(\Pi)-\E(\Tr(\Pi)))^2.$$
They have,
\[
\lim \E (\Tr(\Pi)-\E(\Tr(\Pi)))^k=\left\{
\begin{array}{ll}
0,& \mbox{ if } k=2d-1,\\
\left(\prod_{l=1}^d(2d-2l+1)\right)\sigma_{\Pi}^{2d}, & \mbox{ if } k=2d.
\end{array}
\right.
\]
therefore, as $p,T\rightarrow\infty,~c_p=p/T\rightarrow c\in(0,\infty)$,
\[\Tr(\Pi)-\E\Tr(\Pi)\xrightarrow{d}\N(0,\sigma^2_{\Pi}).\]
\end{proposition}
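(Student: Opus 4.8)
The plan is to prove the distributional convergence by the classical method of moments, so that the two displayed moment identities carry all the weight and the final weak convergence is a formal corollary. Observe first that $\prod_{l=1}^d(2d-2l+1)=(2d-1)!!$ is exactly the $2d$-th moment of $\N(0,1)$; hence the two formulas assert that the $k$-th moment of the centered variable $W:=\Tr(\Pi)-\E\Tr(\Pi)$ converges to the $k$-th moment of $\N(0,\sigma_\Pi^2)$ for every $k$, namely $0$ for odd $k$ and $(2d-1)!!\,\sigma_\Pi^{2d}$ for $k=2d$. Since the normal law satisfies Carleman's condition and is therefore determined by its moments, the Fr\'echet--Shohat theorem yields $W\xrightarrow{d}\N(0,\sigma_\Pi^2)$. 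Thus everything reduces to the moment computation, and the asserted central limit theorem follows.

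To compute $\lim\E[W^k]$, I would expand each factor explicitly. Writing $\widehat\Sigma_\tau=T^{-1}\sum_t\Sigma_0^{1/2}\z_t\z_{t-\tau}^*\Sigma_0^{1/2}$ and substituting into the symmetric polynomial $\Pi$, the trace $\Tr(\Pi)$ becomes a normalized multiple sum, over time indices and coordinate indices, of products of the i.i.d.\ entries $z_{it}$ weighted by entries of $\Sigma_0^{1/2}$. The $k$-th power $W^k$ is then a sum over $k$ such index-configurations, and $\E[W^k]$ is obtained by taking expectations of the resulting products of $z$'s. Because $\E z_{it}=0$, a configuration survives only if every $z_{it}$ appears at least twice; because of the centering, the $k$ copies must moreover be linked into clusters, each of size at least two, through shared $z$-indices. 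Organizing the surviving terms by the partition of the $k$ copies into such connected clusters reduces the problem to a power count in $(p,T)$.

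The heart of the argument, and the step I expect to be the main obstacle, is the graph-theoretic power count showing that clusters of size three or larger contribute at strictly smaller order and hence drop out in the limit. One models each configuration by a graph on the $k$ copies whose edges record shared indices; bounding the number of free summation indices against the normalizing powers of $T^{-1}$, and using the bounded spectral norm of $\Sigma_0$ to control the traces of matrix products that arise, shows that each cluster beyond a pair, and each extra cycle within a cluster, costs a vanishing factor. This forces the dominant contribution to come solely from perfect matchings of the $k$ copies into pairs. When $k=2d-1$ no perfect matching exists, giving $\E[W^{2d-1}]\to0$; when $k=2d$, each of the $(2d-1)!!$ perfect matchings contributes in the limit one factor $\sigma_\Pi^2$ per pair, yielding $\E[W^{2d}]\to(2d-1)!!\,\sigma_\Pi^{2d}$ by the very definition $\sigma_\Pi^2=\lim\E[W^2]$. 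That a matching factorizes asymptotically into independent pair-contributions is itself a consequence of the same power count, which shows the joint cumulants of order $\ge3$ among the pairs to be negligible. As the result is quoted from Section~4.2.3 of \citet{BB16}, I would present this as a sketch and defer the full combinatorial bookkeeping to that reference.
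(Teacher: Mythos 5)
Your proposal is correct and follows essentially the same route as the paper: the paper simply quotes this result from Section~4.2.3 of Bhattacharjee and Bose (2016), whose moment identities give exactly the Gaussian moments $(2d-1)!!\,\sigma_\Pi^{2d}$, so the stated convergence follows by the method of moments (moment determinacy of the normal law), which is precisely your Fr\'echet--Shohat step. Your sketch of the pairing/power-counting argument for the moment limits is the same combinatorial machinery used in that reference, to which both you and the paper defer for the detailed bookkeeping.
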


Since  $G_q$ is a symmetric polynomial in
$\{\widehat{\Sigma}_{\tau},\widehat{\Sigma}_{\tau}^*:\tau\geq 0\}$,
its  asymptotic normality directly results from the proposition above.
It remains to determine its first two  moments in order to  get the
null distribution.  This is done in the following corollary which is a
direct consequence of moment calculations presented in 
Section~1 of the supplemental paper~\cite{supp}.

\begin{corollary}\label{Cor:Gq}
	Let the assumptions for $\z_t$ in Theorem \ref{MainThmMult}
    hold. Under the framework $p/T \rightarrow c > 0$, assume that
    $\norm{\Sigma_0} = O(1)$. Then as $p,T\rightarrow \infty$,
    \begin{align*}
     & \E (G_q)  \sim q p^2 s_1^2/T  ,\\
	&\text{\em Var}(G_q)  \rightarrow qc^2(s_2^2 + b^2(s_2')^2) \\
	&\quad +    4q^2c^3(\nu_4-b-2)s_1^2s_{d,2} + 8q^2c^3s_1^2s_{r,2} +
    4q^2c^3(b-1)s_1^2s_2',
  \end{align*}
  where $s_2' = \lim_{p\rightarrow \infty} \Tr(\Sigma_0\Sigma_0^T)/p$, $s_{r,2} = \lim_{p\rightarrow \infty}\Tr(\Re^2(\Sigma_0))/p$.
\end{corollary}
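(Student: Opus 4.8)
The plan is to lean on the normality already in hand: Proposition~\ref{prop:asyNormal} guarantees that $\Tr(\Pi)-\E\Tr(\Pi)$ is asymptotically Gaussian for any symmetric polynomial $\Pi$ in the sample autocovariance matrices, and $G_q=\sum_{\tau=1}^q\Tr(\widehat{\Sigma}_\tau^*\widehat{\Sigma}_\tau)$ is such a polynomial. Hence the corollary is purely a matter of computing the first two moments of $G_q$ and taking limits. Writing $\x_t=\Sigma_0^{1/2}\z_t$ and using that $\Sigma_0^{1/2}$ is Hermitian with $(\Sigma_0^{1/2})^*\Sigma_0^{1/2}=\Sigma_0$, I would first reduce each summand to a bilinear form in the innovations,
\[
\Tr\big(\widehat{\Sigma}_\tau^*\widehat{\Sigma}_\tau\big)=\frac{1}{T^2}\sum_{s,t=1}^{T}\big(\z_s^*\Sigma_0\z_t\big)\big(\z_{t-\tau}^*\Sigma_0\z_{s-\tau}\big),
\]
where the cyclic convention $\x_t=\x_{T+t}$ removes boundary effects and keeps every time sum over a full period.

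For the mean I would expand the expectation over the independent $\{\z_t\}$ and isolate the admissible matchings of the time labels. Since $\tau\ge1$, the only configuration surviving at leading order is $s=t$, which splits the expectation as $\E(\z_s^*\Sigma_0\z_s)\,\E(\z_{s-\tau}^*\Sigma_0\z_{s-\tau})=\Tr^2(\Sigma_0)$ by independence of $\z_s$ and $\z_{s-\tau}$ together with $\E(\z^*\Sigma_0\z)=\Tr(\Sigma_0)$. Summing over $s$ and over $\tau$ and using $\Tr(\Sigma_0)=ps_1(1+o(1))$ gives $\E(G_q)=qT^{-1}\Tr^2(\Sigma_0)(1+o(1))\sim qp^2s_1^2/T$; all remaining matchings (for instance the cyclic coincidences of $\{s,t\}$ with $\{s-\tau,t-\tau\}$) are of strictly smaller order.

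The variance is the substantial part. I would write $\var(G_q)=\sum_{\tau,\tau'=1}^{q}\cov\big(\Tr(\widehat{\Sigma}_\tau^*\widehat{\Sigma}_\tau),\Tr(\widehat{\Sigma}_{\tau'}^*\widehat{\Sigma}_{\tau'})\big)$ and expand each covariance as a $T^{-4}$-weighted sum of eighth-order innovation moments over the eight time labels $\{s,t,s-\tau,t-\tau\}$ and $\{s',t',s'-\tau',t'-\tau'\}$. Because the covariance subtracts the product of means, only the connected configurations that couple the two trace factors survive. I would enumerate the coincidence patterns of these time labels, evaluate the resulting scalar moment of the $\z$ entries by a Wick-type expansion, and add the genuinely non-Gaussian corrections appearing when three or four components inside a single $\z_t$ coincide: these produce the excess-kurtosis factor $(\nu_4-3)$ attached to the diagonal trace $s_{d,2}=\lim p^{-1}\Tr(D^2(\Sigma_0))$. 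The terms carrying $b=|\E z_{it}^2|^2$, together with $s_2'=\lim p^{-1}\Tr(\Sigma_0\Sigma_0^T)$ and $s_{r,2}=\lim p^{-1}\Tr(\Re^2(\Sigma_0))$, originate from the anomalous contractions $\E(z_{it}z_{jt'})$ of two non-conjugated entries, each contributing a factor $\E(z_{it}^2)$ and inserting a transpose $\Sigma_0^T$ (and hence $\Re(\Sigma_0)$) into the trace bookkeeping. Counting the number of free time indices in each retained pattern against the $T^{-4}$ prefactor, with $p/T\to c$, yields the constant-order limit and assembles the four groups of terms in the stated formula.

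The main obstacle is precisely this combinatorial bookkeeping: correctly enumerating the connected time-label coincidence patterns, attaching to each the right scalar moment weight (distinguishing Gaussian contractions, the $(\nu_4-3)$ diagonal corrections, and the $b$-dependent same-type contractions), and verifying that each surviving pattern contributes at order $O(1)$ while all others are $o(1)$. Once the master formula is established at this level of generality, I would record two consistency checks: specializing $b=1$ (the real case, where $s_2'=s_{r,2}=s_2$) recovers $\sigma^2(c)$ of Theorem~\ref{MainThmMult}, and $b=0$ (the proper complex case) recovers the variance quoted in Section~\ref{subsec:complextests}.
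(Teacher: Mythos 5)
Your proposal is correct and follows essentially the same route as the paper: asymptotic normality is taken from Proposition~\ref{prop:asyNormal}, and the mean and variance are obtained by writing $\Tr(\widehat{\Sigma}_\tau^*\widehat{\Sigma}_\tau)$ as a $T^{-2}$-weighted sum of bilinear forms $\z_s^*\Sigma_0\z_t\,\z_{t-\tau}^*\Sigma_0\z_{s-\tau}$ and enumerating the time-index coincidence patterns, exactly as the supplement does through the quadratic-form moment identities (Lemmas~\ref{lem:2}--\ref{lem:3}, yielding $V_1,\dots,V_4'$) and the master mean/variance formula of Lemma~\ref{lem:4}. Your closing consistency checks ($b=1$ recovering $\sigma^2(c)$ and $b=0$ recovering the proper complex variance) match the paper's own specializations.
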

If the $z_{it}$'s are real, then $\Sigma_0$ is real symmetric and $b=1$, $s_2' = s_{r,2} = s_2$. The asymptotic formula for Var$(G_q)$ then reduces to
\[ 2qc^2s_2^2 + 4q^2c^3(\nu_4-3)s_1^2s_{d,2} + 8q^2c^3s_1^2s_2, \]
which further reduces to $2qc^2s_2^2 + 8q^2c^3s_1^2s_2$ if all the $z_{it}$'s are Gaussian.

\subsection{Proof of Theorem \ref{Thm:s1}} The proof of Theorem
\ref{Thm:s1} is similar to that of Theorem \ref{MainThmMult}, while in
this proof we only consider the real value cases. Both $G_q$ and
$p(\hat{s}_1^2-s_1^2)$ are symmetric polynomials in
$\{\widehat{\Sigma}_{\tau},\widehat{\Sigma}_{\tau}^*:\tau\geq 0\}$,
thus the asymptotic normality of any linear combinations of these two
statistics have been proven by Proposition \ref{prop:asyNormal}. We
can directly calculate the first two order moments and covariance of
these two statistics to obtain the joint limiting distribution. By
directly conducting moment calculations
as in Section~1 of the supplemental paper~\cite{supp},
we have the following proposition. 

\begin{proposition}\label{prop:joint}
	Let the assumptions for $\z_t$ in Theorem \ref{MainThmMult}
hold. Under the framework $p/T \rightarrow c > 0$, assume that
$\norm{\Sigma_0} = O(1)$. Then as $p,T\rightarrow \infty$,
\begin{gather*}
\mathbb{E}\lb p\hat{s}_1^2\rb=\frac{1}{p}\Tr^2(\Sigma_0)+\frac{1}{pT}\lb 2\Tr(\Sigma_0^2)+(\nu_4-3)\Tr(D^2(\Sigma_0))\rb,\\
\var(p\hat{s}_1^2)=\frac{8}{p^2T}\Tr(\Sigma_0^2)\Tr^2(\Sigma_0)+\frac{4}{p^2T}(\nu_4-3)\Tr^2(\Sigma_0)\Tr(D(\Sigma_0))+o(\frac{1}{T}),\\
\mathbb{E}(\hat{s}_2)=\frac{1}{p}\Tr(\Sigma_0^2)+\frac{1}{pT}\Tr^2(\Sigma_0)+\frac{1}{pT}\lb \Tr(\Sigma_0^2)+(\nu_4-3)\Tr(D^2(\Sigma_0))\rb,\\
\mathbb{E}(G_q)=\frac{q}{T}\Tr^2(\Sigma_0),~\var(G_q)=\frac{4q^2}{T^3}\Tr^2(\Sigma_0)\lb 2\Tr(\Sigma_0^2)+(\nu_4-3)\Tr(D^2(\Sigma_0)) \rb\\
\quad+\frac{2q}{T^2}\Tr^2(\Sigma_0^2)+\frac{q}{T^3}\lb 2\Tr(\Sigma_0^2)+(\nu_4-3)\Tr(D^2(\Sigma_0))\rb^2+o(\frac{1}{T}),\\
\cov\lb G_q, ~p\hat{s}_1^2\rb=\frac{4q}{pT^2}\Tr^2(\Sigma_0)\lb 2\Tr(\Sigma_0^2)+(\nu_4-3)\Tr(D^2(\Sigma_0)) \rb+o(\frac{1}{T}).
\end{gather*}	
\end{proposition}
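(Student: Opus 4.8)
The plan is to treat every quantity in the proposition as an explicit polynomial in the i.i.d. array $\{z_{it}\}$ and to evaluate its first two moments by direct expansion, since the asymptotic normality is already supplied by Proposition~\ref{prop:asyNormal} and only the moments remain. Writing $\x_t=\Sigma_0^{1/2}\z_t$, every inner product becomes a bilinear form $\x_s^*\x_t=\z_s^*\Sigma_0\z_t$, so that
\[
\hat{s}_1=\frac{1}{pT}\sum_{t=1}^T\z_t^*\Sigma_0\z_t,\qquad
\Tr(\wh{\Sigma}_\tau^*\wh{\Sigma}_\tau)=\frac{1}{T^2}\sum_{s,t}(\z_s^*\Sigma_0\z_t)(\z_{t-\tau}^*\Sigma_0\z_{s-\tau}),
\]
and $\hat{s}_2=p^{-1}\Tr(\wh{\Sigma}_0^2)$ expands likewise into a double sum of products of two bilinear forms. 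The engine of all the computations is the elementary moment identity for a real vector $\z$ with i.i.d.\ standardized entries and a symmetric matrix $M$, namely $\E(\z^*M\z)=\Tr(M)$ and $\var(\z^*M\z)=2\Tr(M^2)+(\nu_4-3)\Tr(D^2(M))$, together with the facts that a bilinear form $\z_a^*M\z_b$ with $a\neq b$ has mean zero and that forms built from disjoint time indices are independent. The non-Gaussian corrections $(\nu_4-3)\Tr(D^2(\cdot))$ visible throughout the statement are precisely the traces these identities attach to coincidences of \emph{both} spatial indices.

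The means are the easy part and come out as exact identities. For $\E(p\hat{s}_1^2)$ I would expand $\hat{s}_1^2$, use independence across $t$ to reduce $\var(\sum_t\z_t^*\Sigma_0\z_t)$ to $T\var(\z_1^*\Sigma_0\z_1)$, and add $(\E\hat{s}_1)^2=p^{-2}\Tr^2(\Sigma_0)$; this produces the stated $p^{-1}\Tr^2(\Sigma_0)+(pT)^{-1}[2\Tr(\Sigma_0^2)+(\nu_4-3)\Tr(D^2(\Sigma_0))]$, and $\E(\hat{s}_2)$ follows by the same route applied to $\Tr(\wh{\Sigma}_0^2)$. For $\E(G_q)$ the observation is that in the double sum every off-diagonal choice $s\neq t$ forces a bilinear form $\z_a^*\Sigma_0\z_b$ with distinct time indices to survive, which has mean zero (the only competing coincidence, $2\tau\equiv 0 \bmod T$, is impossible for $1\le \tau\le q$ and large $T$); only the diagonal $s=t$ remains, giving $\Tr^2(\Sigma_0)/T$ per lag and hence $q\Tr^2(\Sigma_0)/T$ after summing $\tau=1,\dots,q$.

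The variances and the covariance are where the real work lies. For $\var(p\hat{s}_1^2)$ I would linearize: writing $\hat{s}_1=\E\hat{s}_1+\Delta$ with $\E\hat{s}_1=p^{-1}\Tr(\Sigma_0)$ and $\var(\Delta)=O((pT)^{-1})$, the fluctuation of $\hat{s}_1^2$ is governed by the cross term $2(\E\hat{s}_1)\Delta$, so $\var(p\hat{s}_1^2)\sim 4p^2(\E\hat{s}_1)^2\var(\Delta)$, which yields the two displayed leading terms after inserting the variance formula; the contributions involving the third and fourth moments of $\Delta$ are of order $o(1/T)$ and fall into the remainder. The genuinely heavy step is $\var(G_q)$ and $\cov(G_q,p\hat{s}_1^2)$: here $G_q^2$ and $G_q\cdot p\hat{s}_1^2$ are quadruple sums of products of \emph{four} bilinear forms, i.e.\ eighth-order moments in the $z$'s. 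I would organize the computation by the set partition induced on the eight time indices $\{s_1,t_1,s_1-\tau_1,t_1-\tau_1,s_2,t_2,s_2-\tau_2,t_2-\tau_2\}$, discard every partition leaving a solitary time index (mean zero by independence), and for each surviving partition carry out the spatial sums, which produce the building blocks $\Tr^2(\Sigma_0)$, $\Tr^2(\Sigma_0^2)$, $\Tr(\Sigma_0^2)$ and the diagonal traces $\Tr(D^2(\Sigma_0))$, the last always carrying the factor $(\nu_4-3)$. Subtracting $(\E G_q)^2$ cancels the partition that factors into two independent diagonal blocks, leaving the three terms displayed for $\var(G_q)$, and the same bookkeeping applied to the mixed product gives the covariance.

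The main obstacle is exactly this partition enumeration: one must identify which index-coincidence patterns contribute at the orders retained in the statement, correctly assign to each the right combination of full traces, squared traces, and diagonal $(\nu_4-3)$ traces, and—because of the cyclic convention $\x_t=\x_{T+t}$—verify that the wrap-around patterns near the boundary are only $o(1/T)$ and may be folded into the remainders. Keeping the leading order-one terms together with the explicit $O(1/T)$ correction while discarding everything genuinely of order $o(1/T)$ is the bookkeeping that makes the calculation long rather than conceptually deep; all of it is of the same type as the moment calculations assembled in Section~1 of the supplemental paper~\cite{supp}.
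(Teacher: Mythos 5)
Your plan is correct and, for most of the proposition, is the same proof as the paper's: the paper also writes every quantity as a sum of bilinear forms $\z_s^*\Sigma_0\z_t$, enumerates the time-index coincidence patterns in the double and quadruple sums defining $G_q$, $p\hat{s}_1^2$ and their mixed product (Lemmas 1.3, 1.4 and 1.6 of the supplement), evaluates each surviving pattern through the quadratic-form moments $V_1,\dots,V_4'$ --- with the $(\nu_4-3)\Tr\lb D^2(\cdot)\rb$ corrections arising exactly from the double spatial coincidences you point to --- and then, using $\norm{\Sigma_0}=O(1)$, retains the $O(1)$ and $O(1/T)$ terms and folds the rest (including the circular wrap-around configurations) into the remainder; asymptotic normality is imported from Proposition 4.1 in both treatments, and your computations of $\E(p\hat{s}_1^2)$, $\E(\hat{s}_2)$ and $\E(G_q)=q\Tr^2(\Sigma_0)/T$ coincide with the paper's. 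The one genuine difference is $\var(p\hat{s}_1^2)$: the paper computes it exactly, which requires the third and fourth moments $V_5=\E(\z_t^*\Sigma_0\z_t)^3$ and $V_6=\E(\z_t^*\Sigma_0\z_t)^4$ obtained from the quadratic-form formulas of Ullah (2004) and Bao and Ullah (2010) (supplement Lemma 1.5, involving $\nu_3,\nu_5,\nu_6,\nu_8$ and Hadamard products), whereas your linearization $\hat{s}_1=\E\hat{s}_1+\Delta$ needs only $\var(\Delta)$ together with the bound that $p^2\,\E|\Delta|^3$ and $p^2\,\E\Delta^4$ are $O(1/T^2)$; this is a legitimate and appreciably lighter route to the same leading term, since the skipped higher-moment contributions only feed the $o(1/T)$ remainder (note that, exactly as in the paper's exact computation, this step tacitly uses moments of $z_{it}$ beyond the fourth, because $\var(p\hat{s}_1^2)$ is an eighth-order quantity in the $z$'s). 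Two small remarks: your linearization yields $\frac{4}{p^2T}(\nu_4-3)\Tr^2(\Sigma_0)\Tr\lb D^2(\Sigma_0)\rb$ for the non-Gaussian term, which is the intended result --- the $\Tr\lb D(\Sigma_0)\rb$ in the displayed statement is a typo, as comparison with the variance entry $4c(\nu_4-3)s_1^2s_{d,2}$ in Theorem 2.2 confirms; and in the $\var(G_q)$ bookkeeping the cancellation you attribute to subtracting $(\E G_q)^2$ is in fact spread over several partitions whose leading $\Tr^4(\Sigma_0)$ pieces cancel jointly (in the paper, the combination $V_2^2+2V_1^2V_2-3V_1^4$), so that step needs the care you already flag, but it is bookkeeping rather than a gap.
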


Results in Theorem~\ref{Thm:s1} and Proposition~\ref{prop:Gq1}
naturally follows from  Proposition~\ref{prop:joint}.  The proof of
Proposition~\ref{prop:joint} is postponed to
Section~2 of the supplemental paper~\cite{supp}.

\subsection{Proof of Theorem~\ref{Thm:jointH1}} The proof of Theorem \ref{Thm:jointH1} is similar to that of Theorem \ref{Thm:s1} while the calculations are more complicated. When $\x_t=A_0\z_t+A_1\z_{t-1}$, both $G_q$ and $p(\hat{s}_1^2-s_1^2)$ are still symmetric polynomials in $\{\widehat{\Sigma}_{\tau},\widehat{\Sigma}_{\tau}^*:\tau\geq 0\}$, thus the asymptotic normality of any linear combinations of these two statistics have been proven by Proposition \ref{prop:asyNormal}. We can directly calculate the first two order moments and covariance of these two statistics to obtain the joint limiting distribution. 

To elucidate the calculations of moments, we implement the following decompositions on both $G_q$ and $qTc_p^2\hat{s}_1^2$ when $\x_t=A_0\z_t+A_1\z_{t-1}$ for $q=1$. Actually,
{\small
\begin{align*}
&G_1=\frac{1}{T^2}\sum_{s,t=1}^T\lb A_0\z_s+A_1\z_{s-1}\rb^*\lb A_0\z_t+A_1\z_{t-1}\rb\lb A_0\z_{t-1}+A_1\z_{t-2}\rb^*\lb A_0\z_{s-1}+A_1\z_{s-2}\rb\\
&=G(I)+G(II)+G(III),\\
&Tc_p^2\hat{s}_1^2=\frac{1}{T^3}\sum_{s,t=1}^T\lb A_0\z_s+A_1\z_{s-1}\rb^*\lb A_0\z_s+A_1\z_{s-1}\rb\lb A_0\z_{t}+A_1\z_{t-1}\rb^*\lb A_0\z_{t}+A_1\z_{t-1}\rb\\
&=S(I)+S(II)+S(III),
\end{align*}}
where 
\begin{align*}
G(I)&=\frac{1}{T^2}\sum_{s,t=1}^T\left(\z_s^*A_0^*A_0\z_t\z_{t-1}^*A_0^*A_0\z_{s-1}+\z_{s-1}^*A_1^*A_1\z_{t-1}\z_{t-2}^*A_1^*A_1\z_{s-2} \right.\\
&+\left.\z_s^*A_0^*A_0\z_t\z_{t-2}^*A_1^*A_1\z_{s-2}+\z_{s-1}^*A_1^*A_1\z_{t-1}\z_{t-1}^*A_0^*A_0\z_{s-1}\right),\\
G(II)&=\frac{1}{T^2}\sum_{s,t=1}^T\left(\z_s^*A_0^*A_1\z_{t-1}\z_{t-1}^*A_0^*A_0\z_{s-1}+\z_{s-1}^*A_1^*A_0\z_{t}\z_{t-1}^*A_0^*A_0\z_{s-1} \right.\\
&+\z_{s-1}^*A_1^*A_1\z_{t-1}\z_{t-1}^*A_0^*A_1\z_{s-2}+\z_{s-1}^*A_1^*A_1\z_{t-1}\z_{t-2}^*A_1^*A_0\z_{s-1}\\
&+\z_{s}^*A_0^*A_0\z_{t}\z_{t-2}^*A_1^*A_0\z_{s-1}+\z_{s}^*A_0^*A_0\z_{t}\z_{t-1}^*A_0^*A_1\z_{s-2}\\
&\left.+\z_{s}^*A_0^*A_1\z_{t-1}\z_{t-2}^*A_1^*A_1\z_{s-2}+\z_{s-1}^*A_1^*A_0\z_{t}\z_{t-2}^*A_1^*A_1\z_{s-2}\right),\\
G(III)&=\frac{1}{T^2}\sum_{s,t=1}^T\left( \z_s^*A_0^*A_1\z_{t-1}\z_{t-1}^*A_0^*A_1\z_{s-2}+\z_{s-1}^*A_1^*A_0\z_t\z_{t-2}^*A_1^*A_0\z_{s-1}\right.\\
&\left.+\z_s^*A_0^*A_1\z_{t-1}\z_{t-2}^*A_1^*A_0\z_{s-1}+\z_{s-1}^*A_1^*A_0\z_t\z_{t-1}^*A_0^*A_1\z_{s-2}\right),
\end{align*}
\begin{align*}
S(I)&=\frac{1}{T^3}\sum_{s,t=1}^T\left(\z_t^*A_0^*A_0\z_t\z_{s}^*A_0^*A_0\z_{s}+\z_{t-1}^*A_1^*A_1\z_{t-1}\z_{s-1}^*A_1^*A_1\z_{s-1} \right.\\
&+\left.\z_t^*A_0^*A_0\z_t\z_{s-1}^*A_1^*A_1\z_{s-1}+\z_{t-1}^*A_1^*A_1\z_{t-1}\z_{s}^*A_0^*A_0\z_{s}\right),\\
S(II)&=\frac{1}{T^2}\sum_{s,t=1}^T\left(\z_t^*A_0^*A_1\z_{t-1}\z_{s}^*A_0^*A_0\z_{s}+\z_{t-1}^*A_1^*A_0\z_{t}\z_{s}^*A_0^*A_0\z_{s} \right.\\
&+\z_{t-1}^*A_1^*A_1\z_{t-1}\z_{s}^*A_0^*A_1\z_{s-1}+\z_{t-1}^*A_1^*A_1\z_{t-1}\z_{s-1}^*A_1^*A_0\z_{s}\\
&+\z_{t}^*A_0^*A_0\z_{t}\z_{s-1}^*A_1^*A_0\z_{s}+\z_{t}^*A_0^*A_0\z_{t}\z_{s}^*A_0^*A_1\z_{s-1}\\
&\left.+\z_{t}^*A_0^*A_1\z_{t-1}\z_{s-1}^*A_1^*A_1\z_{s-1}+\z_{t-1}^*A_1^*A_0\z_{t}\z_{s-1}^*A_1^*A_1\z_{s-1}\right),\\
S(III)&=\frac{1}{T^2}\sum_{s,t=1}^T\left( \z_t^*A_0^*A_1\z_{t-1}\z_{s}^*A_0^*A_1\z_{s-1}+\z_{t-1}^*A_1^*A_0\z_t\z_{s-1}^*A_1^*A_0\z_{s}\right.\\
&\left.+\z_t^*A_0^*A_1\z_{t-1}\z_{s-1}^*A_1^*A_0\z_{s}+\z_{t-1}^*A_1^*A_0\z_t\z_{s}^*A_0^*A_1\z_{s-1}\right),
\end{align*}
By conducting moment calculations
similar to Section~1 of the supplemental paper~\cite{supp}, 
we have the following proposition.

\begin{proposition}\label{prop:H1Lemma}
	Let the assumptions in Theorem~\ref{Thm:jointH1} hold, as $p,T\rightarrow \infty, ~p/T\rightarrow c>0$, we have
	\begin{align*}
\mathbb{E}\lb G(I)\rb&=\frac{1}{T}\left[ \Tr\lb\widetilde{\Sigma}_0\widetilde{\Sigma}_1\rb+(\nu_4-3)\Tr\lb D(\widetilde{\Sigma}_0)D(\widetilde{\Sigma}_1)\rb\right]\\
&+
\frac{1}{T}\Tr^2\lb\widetilde{\Sigma}_0+\widetilde{\Sigma}_1\rb+\Tr\lb\widetilde{\Sigma}_0\widetilde{\Sigma}_1\rb,\\
\mathbb{E}\lb G(II)\rb&=0,\quad\mathbb{E}\lb G(III)\rb =\frac{2}{T}\Tr^2\lb\widetilde{\Sigma}_{01}\rb,\\
\mathbb{E}\lb S(I)\rb&=\frac{1}{T^2}\left[2\Tr\lb\widetilde{\Sigma}_0+\widetilde{\Sigma}_1\rb^2+(\nu_4-3)\Tr\lb D^2(\widetilde{\Sigma}_0+\widetilde{\Sigma}_1)\rb\right]\\
&+\frac{1}{T}\Tr^2\lb\widetilde{\Sigma}_0+\widetilde{\Sigma}_1\rb,\\
\mathbb{E}\lb S(II)\rb&=0,\quad \mathbb{E}\lb S(III)\rb=\frac{4}{T^2}\Tr\lb\widetilde{\Sigma}_{01}\widetilde{\Sigma}_{01}^*\rb,
	\end{align*}
	and
{\small
	\begin{gather*}
	\var\lb G(I)\rb=\frac{4}{T^3}\Tr^2\lb \widetilde{\Sigma}_0+\widetilde{\Sigma}_1\rb\left[2\Tr\lb \widetilde{\Sigma}_0+\widetilde{\Sigma}_1\rb^2+(\nu_4-3)\Tr\lb D^2\lb \widetilde{\Sigma}_0+\widetilde{\Sigma}_1\rb\rb\right]\\
	+\frac{8}{T^2}\Tr\lb \widetilde{\Sigma}_0+\widetilde{\Sigma}_1\rb \left[2\Tr\lb \widetilde{\Sigma}_0\widetilde{\Sigma}_1\lb\widetilde{\Sigma}_0+\widetilde{\Sigma}_1\rb\rb+(\nu_4-3)\Tr\lb D\lb\widetilde{\Sigma}_0\widetilde{\Sigma}_1\rb D\lb \widetilde{\Sigma}_0+\widetilde{\Sigma}_1\rb \rb\right]\\
	+\frac{2}{T^2}\Tr^2\lb \widetilde{\Sigma}_0^2+\widetilde{\Sigma}_1^2\rb+\frac{6}{T^2}\Tr^2\lb\widetilde{\Sigma}_0\widetilde{\Sigma}_1 \rb+\frac{4}{T}\left[ 2\Tr\lb \widetilde{\Sigma}_0\widetilde{\Sigma}_1\rb^2+(\nu_4-3)\Tr\lb D^2\lb \widetilde{\Sigma}_0\widetilde{\Sigma}_1\rb\rb\right]+R_n,
\end{gather*}
\begin{gather*}
\var\lb G(III)\rb=\frac{4}{T}\Tr\lb\widetilde{\Sigma}_{01}\widetilde{\Sigma}_{01}^*\widetilde{\Sigma}_{01}^*\widetilde{\Sigma}_{01}\rb+\frac{12}{T^2}\Tr^2\lb\widetilde{\Sigma}_{01}\widetilde{\Sigma}_{01}^*\rb+\frac{16}{T^2}\Tr(\widetilde{\Sigma}_{01})\Tr\lb\widetilde{\Sigma}_{01}\widetilde{\Sigma}_{01}^*\widetilde{\Sigma}_{01}^* \rb\\
+\frac{16}{T^3}\Tr^2\lb \widetilde{\Sigma}_{01}\rb \left[ \Tr\lb\widetilde{\Sigma}_{01}\rb^2+2\Tr\lb\widetilde{\Sigma}_{01}\widetilde{\Sigma}_{01}^*\rb+(\nu_4-3)\Tr\lb D^2\lb \widetilde{\Sigma}_{01}\rb\rb\right]+R_n,
\end{gather*}
\begin{gather*}
\var\lb G(II)\rb=
\frac{8}{T^2}\Tr\lb \widetilde{\Sigma}_{01}\widetilde{\Sigma}_{01}^*\rb\Tr\lb\widetilde{\Sigma}_0^2+\widetilde{\Sigma}_1^2\rb+\frac{16}{T^2}\Tr\lb \widetilde{\Sigma}_{01}\widetilde{\Sigma}_1\rb\Tr\lb \widetilde{\Sigma}_{01}\widetilde{\Sigma}_0\rb\\
+\frac{16}{T^2}\Tr\lb \widetilde{\Sigma}_0+\widetilde{\Sigma}_1\rb \left[ \Tr\lb \widetilde{\Sigma}_{01}^*\widetilde{\Sigma}_{01}\widetilde{\Sigma}_0\rb+ \Tr\lb \widetilde{\Sigma}_{01}\widetilde{\Sigma}_{01}^*\widetilde{\Sigma}_1\rb\right]\\
+\frac{16}{T^2}\Tr(\widetilde{\Sigma}_{01})\left[\Tr\lb\widetilde{\Sigma}_0^2\widetilde{\Sigma}_{01}^*\rb+\Tr\lb\widetilde{\Sigma}_1^2\widetilde{\Sigma}_{01}\rb+2\Tr\lb \widetilde{\Sigma}_1\widetilde{\Sigma}_{01}\widetilde{\Sigma}_0\rb\right]\\
+\frac{4}{T}\Tr\lb \widetilde{\Sigma}_{01}^*\widetilde{\Sigma}_{01}\widetilde{\Sigma}_0^2+\widetilde{\Sigma}_{01}\widetilde{\Sigma}_{01}^*\widetilde{\Sigma}_1^2+2\widetilde{\Sigma}_{01}^*\widetilde{\Sigma}_1\widetilde{\Sigma}_{01}\widetilde{\Sigma}_0\rb\\
+\frac{16}{T^3}\Tr^2\lb \widetilde{\Sigma}_0+\widetilde{\Sigma}_1 \rb\Tr\lb \widetilde{\Sigma}_{01}\widetilde{\Sigma}_{01}^*\rb+\frac{16}{T^3}\Tr^2(\widetilde{\Sigma}_{01})\Tr\lb \widetilde{\Sigma}_0+\widetilde{\Sigma}_1\rb^2\\
+\frac{32}{T^3}\Tr\lb\widetilde{\Sigma}_0+\widetilde{\Sigma}_1\rb\Tr\lb\widetilde{\Sigma}_{01}\rb\Tr\lb \widetilde{\Sigma}_{01}\lb\widetilde{\Sigma}_0+\widetilde{\Sigma}_1\rb\rb+R_n,
\end{gather*}
\begin{gather*}
\cov\lb G(I),~G(III) \rb=\frac{4}{T^2}\Tr^2\lb \widetilde{\Sigma}_0 \widetilde{\Sigma}_{01}\rb +\frac{4}{T^2}\Tr^2\lb \widetilde{\Sigma}_1 \widetilde{\Sigma}_{01}\rb\\
+\frac{8}{T^3}\Tr\lb\widetilde{\Sigma}_{01}\rb\Tr\lb\widetilde{\Sigma}_0+\widetilde{\Sigma}_1\rb\left[2\Tr\lb \widetilde{\Sigma}_{01}\lb\widetilde{\Sigma}_0+\widetilde{\Sigma}_1\rb\rb+(\nu_4-3)\Tr\lb D\lb \widetilde{\Sigma}_{01} \rb D\lb \widetilde{\Sigma}_0+\widetilde{\Sigma}_1 \rb\rb\right]\\
+\frac{8}{T^2}\Tr\lb\widetilde{\Sigma}_{01}\rb\left[2\Tr\lb\widetilde{\Sigma}_0\widetilde{\Sigma}_1\widetilde{\Sigma}_{01}\rb+(\nu_4-3)\Tr\lb D\lb\widetilde{\Sigma}_0\widetilde{\Sigma}_1 \rb D\lb \widetilde{\Sigma}_{01}\rb\rb\right]+R_n,\\
\cov\lb G(I),~G(II)\rb=o(1),\quad\cov\lb G(II),~G(III)\rb=o(1);
\end{gather*}
\begin{gather*}
\var\lb S(I)\rb=\frac{4}{T^3}\Tr^2\lb\widetilde{\Sigma}_0+\widetilde{\Sigma}_1\rb\left[2\Tr\lb\widetilde{\Sigma}_0+\widetilde{\Sigma}_1\rb^2+(\nu_4-3)\Tr\lb D^2\lb \widetilde{\Sigma}_0+\widetilde{\Sigma}_1\rb\rb\right]+R_n,\\
\var\lb S(II)\rb= \frac{16}{T^3}\Tr^2\lb\widetilde{\Sigma}_0+\widetilde{\Sigma}_1\rb\Tr\lb\widetilde{\Sigma}_{01}\widetilde{\Sigma}_{01}^*\rb+R_n,\\
\var\lb S(III)\rb=\frac{32}{T^4}\Tr^2\lb\widetilde{\Sigma}_{01}\widetilde{\Sigma}_{01}^*\rb,~\cov\lb S(I),~S(II)\rb=o(1),\\
~\cov\lb S(I),~S(III)\rb=o(1),~\cov\lb S(II),~S(III)\rb=o(1);
\end{gather*}
\begin{gather*}
\cov\lb G(I),~S(I)\rb=\frac{4}{T^3}\Tr^2\lb\widetilde{\Sigma}_0+\widetilde{\Sigma}_1\rb\left[2\Tr\lb \widetilde{\Sigma}_0+\widetilde{\Sigma}_1\rb^2+(\nu_4-3)\Tr\lb D^2\lb \widetilde{\Sigma}_0+\widetilde{\Sigma}_1\rb\rb\right]\\
+\frac{4}{T^2}\Tr\lb\widetilde{\Sigma}_0+\widetilde{\Sigma}_1\rb\left[2\Tr\lb \widetilde{\Sigma}_0\widetilde{\Sigma}_1\lb\widetilde{\Sigma}_0+\widetilde{\Sigma}_1\rb\rb+(\nu_4-3)\Tr\lb D\lb\widetilde{\Sigma}_0\widetilde{\Sigma}_1\rb D\lb \widetilde{\Sigma}_0+\widetilde{\Sigma}_1\rb \rb\right]+R_n,\\
\cov\lb G(II),~S(II)\rb=\frac{8}{T^2}\Tr\lb\widetilde{\Sigma}_0+\widetilde{\Sigma}_1\rb\left[ \Tr\lb \widetilde{\Sigma}_{01}^*\widetilde{\Sigma}_{01}\widetilde{\Sigma}_0\rb+ \Tr\lb \widetilde{\Sigma}_{01}\widetilde{\Sigma}_{01}^*\widetilde{\Sigma}_1\rb\right]\\
+\frac{16}{T^3}\Tr^2\lb \widetilde{\Sigma}_0+\widetilde{\Sigma}_1 \rb\Tr\lb \widetilde{\Sigma}_{01}\widetilde{\Sigma}_{01}^*\rb+\frac{16}{T^3}\Tr\lb\widetilde{\Sigma}_0+\widetilde{\Sigma}_1\rb\Tr\lb\widetilde{\Sigma}_{01}\rb\Tr\lb \widetilde{\Sigma}_{01}\lb\widetilde{\Sigma}_0+\widetilde{\Sigma}_1\rb\rb+R_n,\\
\cov\lb G(III),S(I)\rb=\frac{8}{T^3}\Tr\lb\widetilde{\Sigma}_{01}\rb\Tr\lb\widetilde{\Sigma}_0+\widetilde{\Sigma}_1\rb\left[2\Tr\lb \widetilde{\Sigma}_{01}\lb\widetilde{\Sigma}_0+\widetilde{\Sigma}_1\rb\rb\right.\\
\left.+(\nu_4-3)\Tr\lb D\lb \widetilde{\Sigma}_{01} \rb D\lb \widetilde{\Sigma}_0+\widetilde{\Sigma}_1 \rb\rb\right]+R_n,\\
\cov\lb G(III),~S(III)\rb=0,~\cov\lb G(I),~S(II)\rb=o(1),~~\cov\lb G(I),~S(III)\rb=0,\\
\cov\lb G(II),~S(I)\rb=o(1),~~\cov\lb G(II),~S(III)\rb=o(1),~\cov\lb G(III),~S(II)\rb=o(1).
\end{gather*}}
Here the $R_n$'s are possibly different: they  represent remainder
terms with smaller orders than the others listed in each variance covariance items.
\end{proposition}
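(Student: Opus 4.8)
The plan is to treat Proposition~\ref{prop:H1Lemma} as a large but systematic moment computation for quartic (and, after squaring, octic) forms in the i.i.d.\ scalar entries $\{z_{it}\}$. Every summand of $G(I),G(II),G(III)$ and of $S(I),S(II),S(III)$ has the shape $\z_a^* M \z_b \, \z_c^* N \z_d$, where each of $a,b,c,d$ is one of the time indices $s,t$ shifted by $0,1,2$, and $M,N$ are products drawn from $\{A_0,A_1\}$, i.e.\ one of $\widetilde{\Sigma}_0,\widetilde{\Sigma}_1,\widetilde{\Sigma}_{01},\widetilde{\Sigma}_{01}^*$. The backbone of the whole argument is the elementary identity, valid for independent standardized real vectors and deterministic matrices,
\[
\E\lb \z_a^* M \z_b \, \z_c^* N \z_d\rb,
\]
evaluated by expanding in coordinates and using $\E z_{it}=0$, $\E z_{it}^2=1$, $\E z_{it}^4=\nu_4$. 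The only configurations that survive are those in which the four vectors pair off by their time indices; a matched pair such as $\z_a=\z_c$ contributes a factor $\Tr(\cdot)$ or $\Tr(\cdot\,\cdot)$, while the event that all four factors coincide on a single coordinate--time cell produces the diagonal correction $(\nu_4-3)\Tr\lb D(\cdot)D(\cdot)\rb$. I would first record these master formulas (as in Section~1 of the supplement) so that every later line reduces to reading off which time indices are forced equal.

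For the means I would then expand each double sum $\frac1{T^2}\sum_{s,t}$ and classify the terms by the coincidence pattern among the shifted indices. The generic (``off-diagonal'') contribution, where the two quadratic forms pair internally, is of size $\Tr^2(\cdot)$ and survives the full $T^2$ summation, giving the $O(1)$ and $\frac1T\Tr^2$ terms; the ``diagonal'' contribution, where $s$ and $t$ are forced together, loses one free summation index and hence carries a prefactor $\frac1T$, producing the $\frac1T[\Tr(\widetilde{\Sigma}_0\widetilde{\Sigma}_1)+(\nu_4-3)\Tr(D(\widetilde{\Sigma}_0)D(\widetilde{\Sigma}_1))]$ type corrections. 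This bookkeeping also explains $\E(G(II))=0$ and $\E(S(II))=0$: in every summand of these two blocks the lag-shift pattern of the arguments makes it impossible for all four time indices to coincide in pairs, so each term vanishes in expectation.

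The variances and covariances are the substantial part. Here each quantity is $\E[(\cdot)^2]-(\E[\cdot])^2$, so I would expand the square of a sum of quartic forms into products of eight $\z$-vectors, indexed by $s,t$ in the first copy and $s',t'$ in the second, and evaluate the resulting octic moments by the same pairing principle. The organizing idea is to stratify the quadruple sum $\frac1{T^4}\sum_{s,t,s',t'}$ by the partition of the eight time indices into coincidence blocks: only partitions whose number of free summation indices balances the explicit $T$-powers contribute at leading order, and the cross terms between the two copies produce the genuinely new trace monomials such as $\Tr(\widetilde{\Sigma}_{01}\widetilde{\Sigma}_{01}^*\widetilde{\Sigma}_{01}^*\widetilde{\Sigma}_{01})$ and $\Tr(\widetilde{\Sigma}_{01})\Tr(\widetilde{\Sigma}_{01}\widetilde{\Sigma}_{01}^*\widetilde{\Sigma}_{01}^*)$. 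Everything of lower order is swept into $R_n$. The $(\nu_4-3)$ contributions again appear exactly when a coincidence block collapses four factors onto one diagonal cell, yielding the $D^2(\cdot)$ and $D(\cdot)D(\cdot)$ trace terms. The near-orthogonality statements, $\cov(G(I),G(II))=o(1)$ and the like, follow because the mixed products carry an unbalanced number of $\widetilde{\Sigma}_{01}$ factors, so no surviving pairing reaches the leading $T$-order.

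The main obstacle is purely the combinatorial bulk and the danger of miscounting: a variance such as $\var(G(II))$ is the square of a sum of eight quartic forms, so a large number of index-coincidence patterns must be enumerated, reduced to traces, and sorted by their order in $T$ without omission or double-counting. To control this I would fix a canonical ordering of the coincidence blocks, compute leading contributions one block-class at a time, and validate the output by two independent checks: setting $A_1={\bf 0}$ (so $\widetilde{\Sigma}_1=\widetilde{\Sigma}_{01}=0$) must collapse each formula onto the corresponding entry of Proposition~\ref{prop:joint}, and the diagonal $\nu_4$-terms must agree between the $\var$ and the relevant $\cov$ entries in the manner dictated by the quadratic-form identity above.
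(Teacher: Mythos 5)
Your plan matches the paper's own treatment: the paper proves Proposition~\ref{prop:H1Lemma} exactly by the decomposition into $G(I),G(II),G(III)$ and $S(I),S(II),S(III)$ followed by brute-force moment calculations of the quartic/octic forms in $\{z_{it}\}$, using the quadratic-form expectation lemmas and an enumeration of time-index coincidence patterns (with the $(\nu_4-3)$ diagonal corrections and lower-order terms absorbed into $R_n$), just as you describe, and it likewise uses the $A_1=\mathbf{0}$ reduction to Proposition~\ref{prop:joint} as a consistency check. Your proposal is a correct outline of essentially the same argument, with the usual caveat that the actual verification consists of carrying out the combinatorial enumeration in full.
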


Theorem~\ref{Thm:jointH1} naturally follows from Proposition~\ref{prop:H1Lemma}.

\begin{supplement}[id=suppA]
  \stitle{Supplement to ``On testing for high-dimensional white noise''}
  \slink[doi]{COMPLETED BY THE TYPESETTER}
  \sdatatype{.pdf}
  \sdescription{This supplemental article contains some technical
    lemmas, the proof of Proposition 4.2 of the main article, and some
    additional  simulation results.}\footnote{For the version posted
    on arXiv, the supplemental article appears as an appendix of
    this main paper.}
\end{supplement}




{\scriptsize
\begin{table}[!h]
\caption{Empirical sizes for our tests $G_{q}$ and $G_{q,1}$, the  Hosking
  test  $\widetilde{Q}_{q}$ and the Li-McLeod test $Q_{q}^{*}$.\label{Tab:cLSizeCom}}
\resizebox{0.98\textwidth}{!}{
	\def\arraystretch{1.1}
\begin{tabular}{ccc|cc|cc|cc|cc}
	\hline 
	&  &  & \multicolumn{2}{c|}{$G_{q}$} & \multicolumn{2}{c|}{$G_{q,1}$} & \multicolumn{2}{c|}{$\widetilde{Q}{}_{q}$} & \multicolumn{2}{c}{$Q_{q}^{*}$}\tabularnewline
	\cline{4-11} 
	$p$ & $T$ & $p/T$ & $q=1$ & $q=3$ & $q=1$ & $q=3$ & $q=1$ & $q=3$ & $q=1$ & $q=3$\tabularnewline
	\hline 
	5 & 1000 & 0.005 & 0.0630 & 0.0615 & 0.0610 & 0.0645 & 0.0490 & 0.0478 & 0.0488 & 0.0476\tabularnewline
	10 & 2000 & 0.005 & 0.0630 & 0.0580 & 0.0615 & 0.0575 & 0.0492 & 0.0440 & 0.0492 & 0.0436\tabularnewline
	25 & 5000 & 0.005 & 0.0520 & 0.0470 & 0.0575 & 0.0535 & 0.0498 & 0.0528 & 0.0498 & 0.0528\tabularnewline
	40 & 8000 & 0.005 & 0.0565 & 0.0395 & 0.0540 & 0.0430 & 0.0508 & 0.0520 & 0.0508 & 0.0520\tabularnewline
	\hline
	10 & 1000 & 0.01 & 0.0740 & 0.0565 & 0.0675 & 0.0570 & 0.0472 & 0.0468 & 0.0470 & 0.0464\tabularnewline
	20 & 2000 & 0.01 & 0.0500 & 0.0555 & 0.0540 & 0.0540 & 0.0502 & 0.0530 & 0.0502 & 0.0530\tabularnewline
	50 & 5000 & 0.01 & 0.0455 & 0.0555 & 0.0450 & 0.0580 & 0.0488 & 0.0498 & 0.0488 & 0.0498\tabularnewline
	80 & 8000 & 0.01 & 0.0500 & 0.0490 & 0.0510 & 0.0520 & 0.0464 & 0.0406 & 0.0464 & 0.0404\tabularnewline
	\hline
	50 & 1000 & 0.05 & 0.0375 & 0.0495 & 0.0410 & 0.0475 & 0.0408 & 0.0466 & 0.0408 & 0.0466\tabularnewline
	100 & 2000 & 0.05 & 0.0570 & 0.0525 & 0.0560 & 0.0515 & 0.0432 & 0.0414 & 0.0432 & 0.0414\tabularnewline
	250 & 5000 & 0.05 & 0.0500 & 0.0480 & 0.0495 & 0.0500 & 0.0456 & 0.0436 & 0.0456 & 0.0434\tabularnewline
	400 & 8000 & 0.05 & 0.0410 & 0.0480 &0.0455  &0.0505  & 0.0418 & 0.0410 & 0.0418 & 0.0410\tabularnewline
	\hline
	10 & 100 & 0.1 & 0.0570 & 0.0555 & 0.0555 & 0.0570 & 0.0300 & 0.0400 & 0.0280 & 0.0362\tabularnewline
	40 & 400 & 0.1 & 0.0560 & 0.0590 & 0.0575 & 0.0525 & 0.0362 & 0.0342 & 0.0358 & 0.0338\tabularnewline
	60 & 600 & 0.1 & 0.0465 & 0.0585 & 0.0550 & 0.0595 & 0.0340 & 0.0340 & 0.0340 & 0.0338\tabularnewline
	100 & 1000 & 0.1 & 0.0515 & 0.0500 & 0.0435 & 0.0480 & 0.0370 & 0.0268 & 0.0366 & 0.0264\tabularnewline
	\hline
	50 & 100 & 0.5 & 0.0520 & 0.0465 & 0.0480 & 0.0520 & 0.0006 & 0.0018 & 0.0006 & 0.0018\tabularnewline
	200 & 400 & 0.5 & 0.0400 & 0.0415 & 0.0505 & 0.0545 & 0.0010 & 0.0004 & 0.0010 & 0.0004\tabularnewline
	300 & 600 & 0.5 & 0.0390 & 0.0480 & 0.0455 & 0.0480 & 0.0002 & 0.0008 & 0.0002 & 0.0008\tabularnewline
	500 & 1000 & 0.5 & 0.0470 & 0.0470 & 0.0430 & 0.0545 & 0 & 0 & 0 & 0\tabularnewline
	\hline
	90 & 100 & 0.9 & 0.0555 & 0.0580 & 0.0460 & 0.0455 & 0 & 0 & 0 & 0\tabularnewline
	360 & 400 & 0.9 & 0.0475 & 0.0520 & 0.0535 & 0.0405 & 0 & 0 & 0 & 0\tabularnewline
	540 & 600 & 0.9 & 0.0535 & 0.0550 & 0.0550 & 0.0540 & 0 & 0 & 0 & 0\tabularnewline
900&1000  &0.9  & 0.0495 &0.0505  &0.0545  &0.0515  &  0&  0&  0&0 \tabularnewline
	\hline
\end{tabular}}
\end{table}
}




\renewcommand{\arraystretch}{1.15}
\begin{table}[!h]
	\centering
		\caption{Test sizes of our tests $G_q$ and $G_{q,1}$}\label{Tab:HDSize}
	\resizebox{1.05\textwidth}{!}{
\begin{tabular}{ccc|cc|cc|cc|ccc}
	&  & \multicolumn{1}{c}{} & \multicolumn{4}{c}{Gaussian (I)} & \multicolumn{4}{c}{Non-Gaussian (II)} & \tabularnewline
	\cline{1-11} 
	&  &  & \multicolumn{2}{c|}{$G_{q}$} & \multicolumn{2}{c|}{$G_{q,1}$} & \multicolumn{2}{c|}{$G_{q}$} & \multicolumn{2}{c}{$G_{q,1}$} & \tabularnewline
	\cline{4-11} 
	$p$ & $T$ & $p/T$ & $q=1$ & $q=3$ & $q=1$ & $q=3$ & $q=1$ & $q=3$ & $q=1$ & $q=3$ & \tabularnewline
	\cline{1-11} 
	5 & 500 & 0.01 & 0.0500 & 0.0545 & 0.0465 & 0.0485 & 0.0650 & 0.0655 & 0.0655 & 0.0540 & \tabularnewline
	10 & 1000 & 0.01 & 0.0565 & 0.0420 & 0.0575 & 0.0400 & 0.0515 & 0.0615 & 0.0600 & 0.0575 & \tabularnewline
	20 & 2000 & 0.01 & 0.0545 & 0.0570 & 0.0515 & 0.0525 & 0.0610 & 0.0595 & 0.0600 & 0.0510 & \tabularnewline
	\cline{1-11} 
	25 & 500 & 0.05 & 0.0550 & 0.0570 & 0.0630 & 0.0510 & 0.0570 & 0.0645 & 0.0520 & 0.0565 & \tabularnewline
	50 & 1000 & 0.05 & 0.0520 & 0.0515 & 0.0510 & 0.0455 & 0.0500 & 0.0485 & 0.0495 & 0.0455 & \tabularnewline
	100 & 2000 & 0.05 & 0.0565 & 0.0410 & 0.0545 & 0.0355 & 0.0500 & 0.0595 & 0.0440 & 0.0530 & (III)\tabularnewline
	\cline{1-11} 
	100 & 100 & 1 & 0.0515 & 0.0545 & 0.0565 & 0.0520 & 0.0515 & 0.0520 & 0.0395 & 0.0420 & \tabularnewline
	200 & 200 & 1 & 0.0540 & 0.0460 & 0.0485 & 0.0395 & 0.0475 & 0.0495 & 0.0450 & 0.0520 & \tabularnewline
	400 & 400 & 1 & 0.0570 & 0.0565 & 0.0505 & 0.0450 & 0.0385 & 0.0420 & 0.0505 & 0.0510 & \tabularnewline
	\cline{1-11} 
	200 & 100 & 2 & 0.0530 & 0.0480 & 0.0560 & 0.0380 & 0.0560 & 0.0545 & 0.0370 & 0.0420 & \tabularnewline
	400 & 200 & 2 & 0.0480 & 0.0500 & 0.0510 & 0.0420 & 0.0545 & 0.0515 & 0.0470 & 0.0390 & \tabularnewline
	800 & 400 & 2 & 0.0505 & 0.0485 & 0.0480 & 0.0520 & 0.0475 & 0.0470 & 0.0405 & 0.0445 & \tabularnewline
	\cline{1-11} 
	\tabularnewline
	\cline{1-11}
	5 & 500 & 0.01 & 0.0630 & 0.0715 & 0.0585 & 0.0665 & 0.0670 & 0.0560 & 0.0650 & 0.0585 & \tabularnewline
	10 & 1000 & 0.01 & 0.0680 & 0.0645 & 0.0695 & 0.0580 & 0.0555 & 0.0540 & 0.0545 & 0.0565 & \tabularnewline
	20 & 2000 & 0.01 & 0.0590 & 0.0545 & 0.0575 & 0.0540 & 0.0655 & 0.0520 & 0.0635 & 0.0560 & \tabularnewline
	\cline{1-11} 
	25 & 500 & 0.05 & 0.0510 & 0.0545 & 0.0505 & 0.0505 & 0.0635 & 0.0590 & 0.0595 & 0.0580 & \tabularnewline
	50 & 1000 & 0.05 & 0.0435 & 0.0425 & 0.0475 & 0.0405 & 0.0550 & 0.0555 & 0.0535 & 0.0465 & \tabularnewline
	100 & 2000 & 0.05 & 0.0480 & 0.0460 & 0.0470 & 0.0420 & 0.0600 & 0.0460 & 0.0595 & 0.0520 & (IV)\tabularnewline
	\cline{1-11} 
	100 & 100 & 1 & 0.0500 & 0.0525 & 0.0455 & 0.0455 & 0.0545 & 0.0485 & 0.0595 & 0.0530 & \tabularnewline
	200 & 200 & 1 & 0.0510 & 0.0530 & 0.0530 & 0.0505 & 0.0495 & 0.0460 & 0.0480 & 0.0520 & \tabularnewline
	400 & 400 & 1 & 0.0535 & 0.0495 & 0.0530 & 0.0390 & 0.0450 & 0.0440 & 0.0510 & 0.0520 & \tabularnewline
	\cline{1-11} 
	200 & 100 & 2 & 0.0550 & 0.0545 & 0.0480 & 0.0605 & 0.0480 & 0.0485 & 0.0415 & 0.0450 & \tabularnewline
	400 & 200 & 2 & 0.0470 & 0.0485 & 0.0540 & 0.0525 & 0.0545 & 0.0525 & 0.0460 & 0.0520 & \tabularnewline
	800 & 400 & 2 & 0.0415 & 0.0505 & 0.0450 & 0.0495 & 0.0480 & 0.0490 & 0.0510 & 0.0495 & \tabularnewline
	\cline{1-11} 
\end{tabular}}
\end{table}

\renewcommand{\arraystretch}{1.15}
\begin{table}[!h]
	\centering
		\caption{Test power of our tests $G_q$ and $G_{q,1}$ under VAR(1)}\label{Tab:HDPower}
	\resizebox{1.05\textwidth}{!}{
\begin{tabular}{cccc|cc|cc|cc|ccc}
	&  &  & \multicolumn{1}{c}{} & \multicolumn{4}{c}{Gaussian (I)} & \multicolumn{4}{c}{Non-Gaussian (II)} & \tabularnewline
	\cline{1-12} 
	&  &  &  & \multicolumn{2}{c|}{$G_{q}$} & \multicolumn{2}{c|}{$G_{q,1}$} & \multicolumn{2}{c|}{$G_{q}$} & \multicolumn{2}{c}{$G_{q,1}$} & \tabularnewline
	\cline{5-12} 
	$p$ & $T$ & $p/T$ & $a$ & $q=1$ & $q=3$ & $q=1$ & $q=3$ & $q=1$ & $q=3$ & $q=1$ & $q=3$ & \tabularnewline
	\cline{1-12} 
	5 & 500 & 0.01 & 0.05 & 0.2355 & 0.1535 & 0.2500 & 0.1540 & 0.2485 & 0.1475 & 0.2465 & 0.1505 & \tabularnewline
	10 & 1000 & 0.01 & 0.05 & 0.5280 & 0.2770 & 0.5335 & 0.2935 & 0.5135 & 0.2645 & 0.5265 & 0.2930 & \tabularnewline
	20 & 2000 & 0.01 & 0.05 & 0.9460 & 0.6620 & 0.9495 & 0.6995 & 0.9355 & 0.6010 & 0.9500 & 0.6670 & \tabularnewline
	\cline{1-12} 
	25 & 500 & 0.05 & 0.05 & 0.2260 & 0.1300 & 0.2485 & 0.1770 & 0.2315 & 0.1395 & 0.2585 & 0.1810 & \tabularnewline
	50 & 1000 & 0.05 & 0.05 & 0.5410 & 0.2800 & 0.5995 & 0.3785 & 0.5105 & 0.2495 & 0.5960 & 0.3750 & \tabularnewline
	100 & 2000 & 0.05 & 0.05 & 0.9580 & 0.6550 & 0.9815 & 0.8275 & 0.9500 & 0.5895 & 0.9805 & 0.8385 & (III)\tabularnewline
	\cline{1-12} 
	100 & 100 & 1 & 0.1 & 0.2615 & 0.2205 & 0.6170 & 0.8190 & 0.2100 & 0.1750 & 0.6165 & 0.8285 & \tabularnewline
	200 & 200 & 1 & 0.1 & 0.6010 & 0.4720 & 0.9870 & 0.9995 & 0.4460 & 0.3370 & 0.9865 & 1 & \tabularnewline
	400 & 400 & 1 & 0.1 & 0.9745 & 0.9230 & 1 & 1 & 0.9025 & 0.7875 & 1 & 1 & \tabularnewline
	\cline{1-12} 
	200 & 100 & 2 & 0.1 & 0.3275 & 0.2710 & 0.9375 & 0.9980 & 0.2420 & 0.2135 & 0.9390 & 0.9995 & \tabularnewline
	400 & 200 & 2 & 0.1 & 0.7415 & 0.6745 & 1 & 1 & 0.5715 & 0.4830 & 1 & 1 & \tabularnewline
	800 & 400 & 2 & 0.1 & 0.9995 & 0.9930 & 1 & 1 & 0.9710 & 0.9350 & 1 & 1 & \tabularnewline
	\cline{1-12} 
	 \tabularnewline
	\cline{1-12} 
	5 & 500 & 0.01 & 0.05 & 0.2540 & 0.1680 & 0.2590 & 0.1700 & 0.2355 & 0.1505 & 0.2450 & 0.1615 & \tabularnewline
	10 & 1000 & 0.01 & 0.05 & 0.4650 & 0.2870 & 0.4730 & 0.2850 & 0.4650 & 0.2885 & 0.4825 & 0.2970 & \tabularnewline
	20 & 2000 & 0.01 & 0.05 & 0.8750 & 0.6170 & 0.8815 & 0.6285 & 0.8880 & 0.5980 & 0.8950 & 0.6190 & \tabularnewline
	\cline{1-12} 
	25 & 500 & 0.05 & 0.05 & 0.2580 & 0.1630 & 0.2555 & 0.1710 & 0.2475 & 0.1415 & 0.2655 & 0.1750 & \tabularnewline
	50 & 1000 & 0.05 & 0.05 & 0.5215 & 0.2650 & 0.5525 & 0.3110 & 0.5165 & 0.2575 & 0.5450 & 0.3270 & \tabularnewline
	100 & 2000 & 0.05 & 0.05 & 0.9450 & 0.6500 & 0.9555 & 0.7320 & 0.9345 & 0.6240 & 0.9635 & 0.7405 & (IV)\tabularnewline
	\cline{1-12} 
	100 & 100 & 1 & 0.1 & 0.2145 & 0.1690 & 0.3700 & 0.4695 & 0.1970 & 0.1470 & 0.3765 & 0.4495 & \tabularnewline
	200 & 200 & 1 & 0.1 & 0.4910 & 0.3470 & 0.8335 & 0.9005 & 0.4355 & 0.2935 & 0.8430 & 0.9150 & \tabularnewline
	400 & 400 & 1 & 0.1 & 0.9205 & 0.7690 & 1 & 1 & 0.8655 & 0.6735 & 1 & 1 & \tabularnewline
	\cline{1-12} 
	200 & 100 & 2 & 0.1 & 0.2450 & 0.2035 & 0.6255 & 0.8115 & 0.2240 & 0.1745 & 0.6425 & 0.8235 & \tabularnewline
	400 & 200 & 2 & 0.1 & 0.5815 & 0.4790 & 0.9915 & 1 & 0.5000 & 0.3770 & 0.9880 & 1 & \tabularnewline
	800 & 400 & 2 & 0.1 & 0.9705 & 0.9205 & 1 & 1 & 0.9425 & 0.8525 & 1 & 1 & \tabularnewline
	\cline{1-12} 
\end{tabular}}

\end{table}

\renewcommand{\arraystretch}{1.2}
\begin{table}[!h]
	\centering
	\caption{Test power of our tests $G_1$ and $G_{1,1}$ under VMA(1)}\label{Tab:VMAPower}
	\resizebox{\textwidth}{!}{
\begin{tabular}{cccc|cc|c|cc|cc}
	&  &  & \multicolumn{1}{c}{} & \multicolumn{3}{c}{Gaussian (I)} & \multicolumn{3}{c}{Non-Gaussian (II)} & \tabularnewline
	\cline{1-10} 
	$p$ & $T$ & $p/T$ & $a$ & $G_{1}$ & $G_{1,1}$ & $\beta(G_{1,1})$ & $G_{1}$ & $G_{1,1}$ & $\beta(G_{1,1})$ & \tabularnewline
	\cline{1-10} 
	10 & 200 & 0.05 & 0.07 & 0.2085 & 0.2260 & 0.2144 & 0.1865 & 0.1990 & 0.2159 & \tabularnewline
	20 & 400 & 0.05 & 0.07 & 0.4135 & 0.4410 & 0.4530 & 0.3805 & 0.4315 & 0.4548 & \tabularnewline
	40 & 800 & 0.05 & 0.07 & 0.8350 & 0.8985 & 0.8903 & 0.7910 & 0.8885 & 0.8910 & \tabularnewline
	\cline{1-10} 
	20 & 200 & 0.1 & 0.07 & 0.1830 & 0.2120 & 0.2235 & 0.1755 & 0.2165 & 0.2250 & \tabularnewline
	40 & 400 & 0.1 & 0.07 & 0.3915 & 0.4925 & 0.5015 & 0.3605 & 0.4800 & 0.5034 & \tabularnewline
	80 & 800 & 0.1 & 0.07 & 0.8480 & 0.9395 & 0.9372 & 0.7995 & 0.9485 & 0.9377 & \tabularnewline
	\cline{1-10} 
	50 & 100 & 0.5 & 0.07 & 0.1185 & 0.1705 & 0.1790 & 0.1070 & 0.1730 & 0.1804 & \tabularnewline
	100 & 200 & 0.5 & 0.07 & 0.2070 & 0.3820 & 0.3958 & 0.1600 & 0.3850 & 0.3977 & (V)\tabularnewline
	200 & 400 & 0.5 & 0.07 & 0.4940 & 0.8395 & 0.8521 & 0.3660 & 0.8400 & 0.8531 & \tabularnewline
	\cline{1-10} 
	100 & 100 & 1 & 0.07 & 0.1305 & 0.2670 & 0.2754 & 0.1120 & 0.2715 & 0.2771 & \tabularnewline
	200 & 200 & 1 & 0.07 & 0.2540 & 0.6605 & 0.6485 & 0.1925 & 0.6470 & 0.6502 & \tabularnewline
	400 & 400 & 1 & 0.07 & 0.5520 & 0.9900 & 0.9903 & 0.4110 & 0.9895 & 0.9904 & \tabularnewline
	\cline{1-10} 
	200 & 100 & 2 & 0.07 & 0.1510 & 0.4990 & 0.5157 & 0.1225 & 0.5000 & 0.5177 & \tabularnewline
	400 & 200 & 2 & 0.07 & 0.3005 & 0.9480 & 0.9500 & 0.2385 & 0.9500 & 0.9504 & \tabularnewline
	800 & 400 & 2 & 0.07 & 0.7310 & 1 & 0.9999 & 0.5500 & 1 & 0.9999 & \tabularnewline
	\cline{1-10} 
	&  &  & \multicolumn{1}{c}{} &  & \multicolumn{1}{c}{} & \multicolumn{1}{c}{} &  & \multicolumn{1}{c}{} &  & \tabularnewline
	\cline{1-10} 
	$p$ & $T$ & $p/T$ & $r$ & $G_{1}$ & $G_{1,1}$ & $\beta(G_{1,1})$ & $G_{1}$ & $G_{1,1}$ & $\beta(G_{1,1})$ & \tabularnewline
	\cline{1-10} 
	10 & 200 & 0.05 & 0.01 & 0.9955 & 0.9995 & 0.9838 & 1 & 1 & 0.9884 & \tabularnewline
	20 & 400 & 0.05 & 0.01 & 1 & 1 & 0.9995 & 1 & 1 & 0.9994 & \tabularnewline
	40 & 800 & 0.05 & 0.01 & 1 & 1 & 0.9999 & 1 & 1 & 0.9999 & \tabularnewline
	\cline{1-10} 
	20 & 200 & 0.1 & 0.01 & 0.9700 & 0.9935 & 0.9705 & 0.9875 & 0.9980 & 0.9815 & \tabularnewline
	40 & 400 & 0.1 & 0.01 & 0.9995 & 1 & 0.9970 & 0.9980 & 1 & 0.9979 & \tabularnewline
	80 & 800 & 0.1 & 0.01 & 1 & 1 & 0.9999 & 0.9995 & 1 & 0.9999 & \tabularnewline
	\cline{1-10} 
	50 & 100 & 0.5 & 0.01 & 0.0530 & 0.0445 & 0.0500 & 0.0615 & 0.0510 & 0.0500 & \tabularnewline
	100 & 200 & 0.5 & 0.01 & 0.3255 & 0.5855 & 0.6185 & 0.2765 & 0.5925 & 0.6155 & (VI)\tabularnewline
	200 & 400 & 0.5 & 0.01 & 0.6080 & 0.9390 & 0.9439 & 0.5150 & 0.9565 & 0.9544 & \tabularnewline
	\cline{1-10} 
	100 & 100 & 1 & 0.01 & 0.1135 & 0.1910 & 0.2132 & 0.1110 & 0.2575 & 0.2759 & \tabularnewline
	200 & 200 & 1 & 0.01 & 0.2200 & 0.5665 & 0.5690 & 0.1780 & 0.5450 & 0.5770 & \tabularnewline
	400 & 400 & 1 & 0.01 & 0.5110 & 0.9755 & 0.9709 & 0.3810 & 0.9620 & 0.9628 & \tabularnewline
	\cline{1-10} 
	200 & 100 & 2 & 0.01 & 0.0910 & 0.2430 & 0.2553 & 0.1020 & 0.2660 & 0.2772 & \tabularnewline
	400 & 200 & 2 & 0.01 & 0.1625 & 0.5785 & 0.5972 & 0.1320 & 0.5575 & 0.5917 & \tabularnewline
	800 & 400 & 2 & 0.01 & 0.3695 & 0.9755 & 0.9714 & 0.2615 & 0.9785 & 0.9746 & \tabularnewline
	\cline{1-10} 
\end{tabular}	
	}
\end{table}


\begin{figure}[!h]
  \centering
  \includegraphics[width=\textwidth,trim={1.5cm 1cm 1.5cm 1cm},clip]{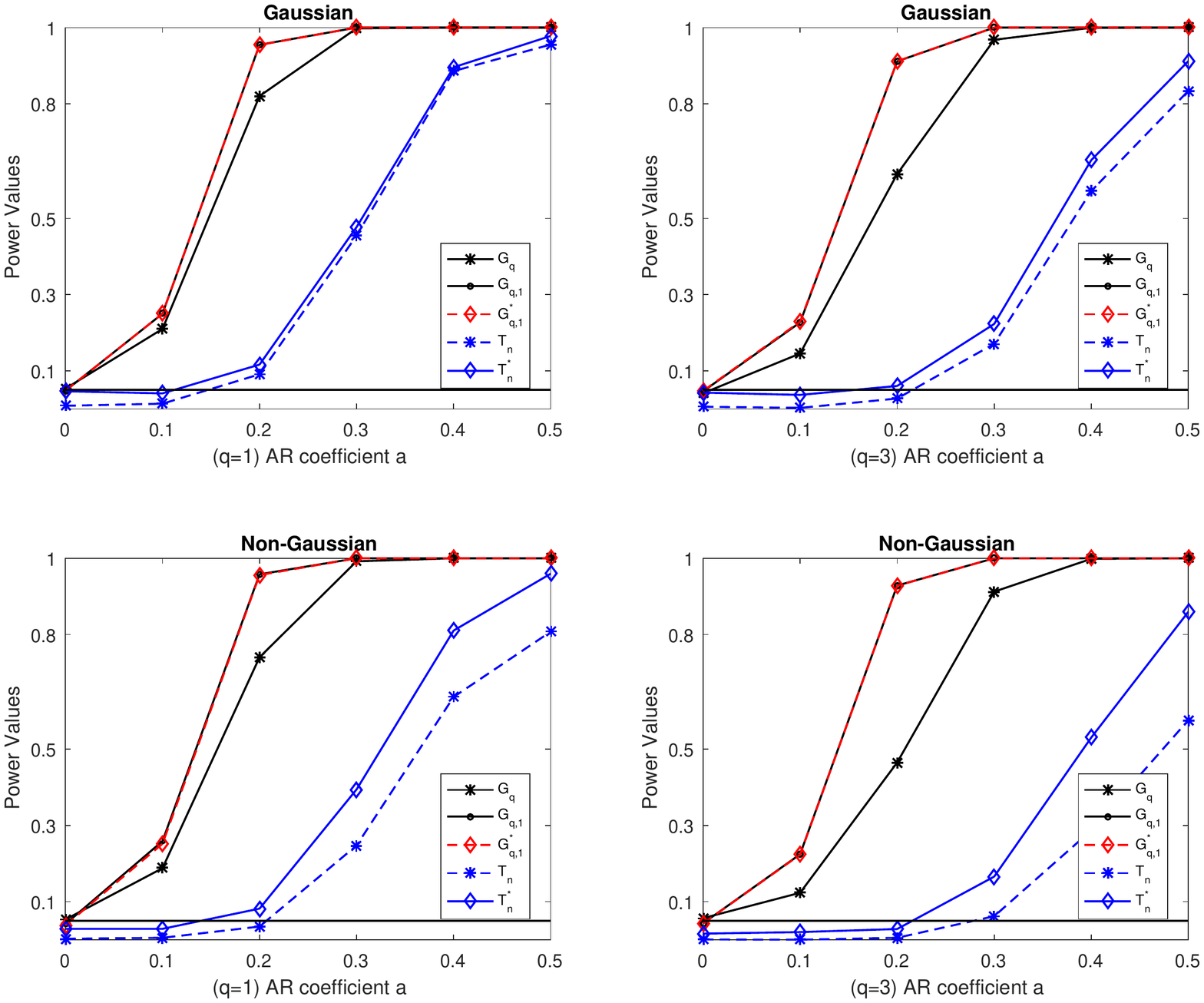}\\
  \includegraphics[width=\textwidth,trim={1.5cm 1cm 1.5cm 1cm},clip]{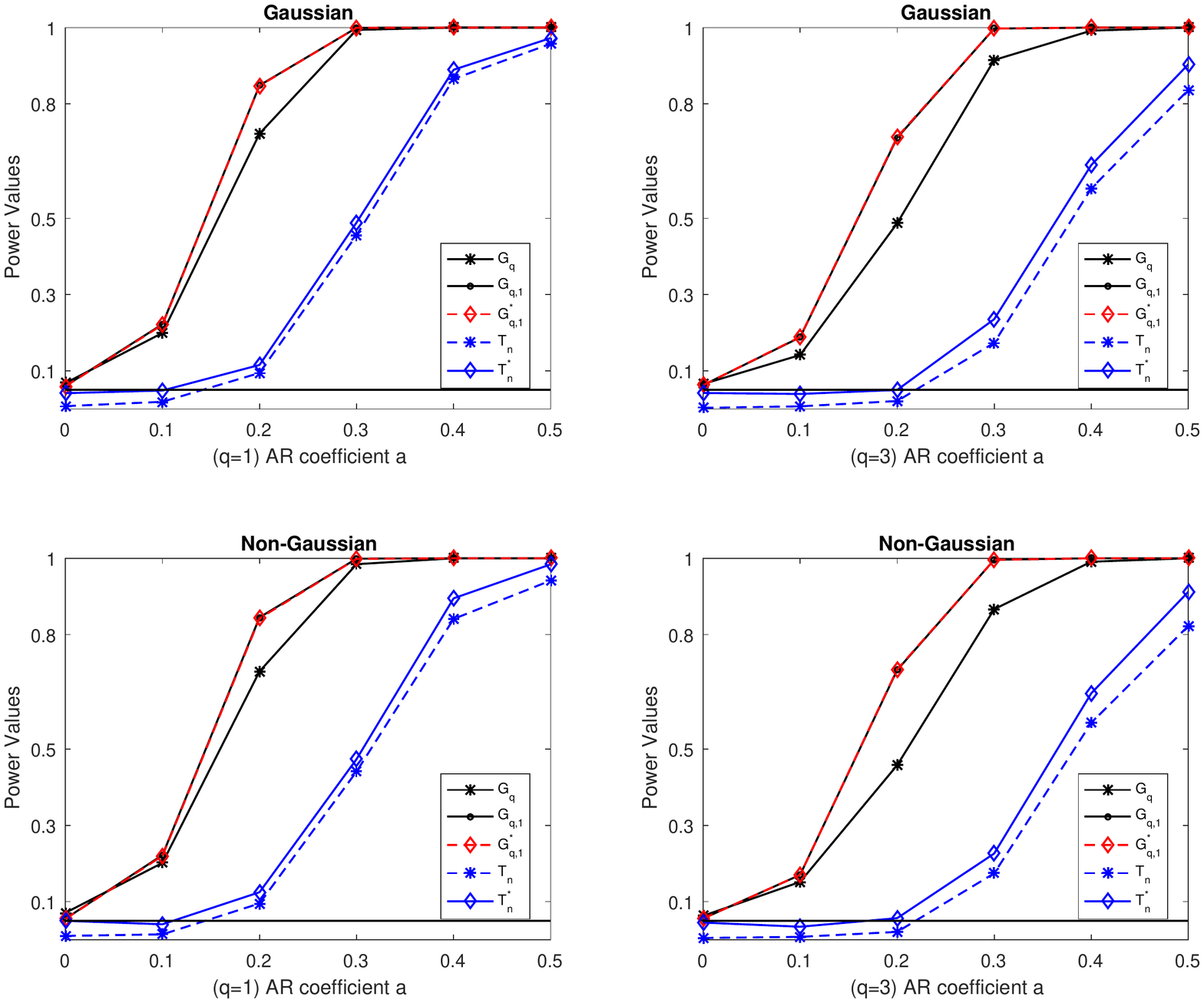}\\
  \caption{Power comparison under VAR(1)  with
    $(p,T)=(20,100)$.
    Left column with $q=1$ and right column with $q=3$. First two
    rows under alternative model (III); last two rows  under alternative model (IV). 
  }\label{Fig:SF2}
\end{figure}

\begin{figure}[!h]
  \centering
  \includegraphics[width=\textwidth,trim={1.5cm 1cm 1.5cm 1cm},clip]{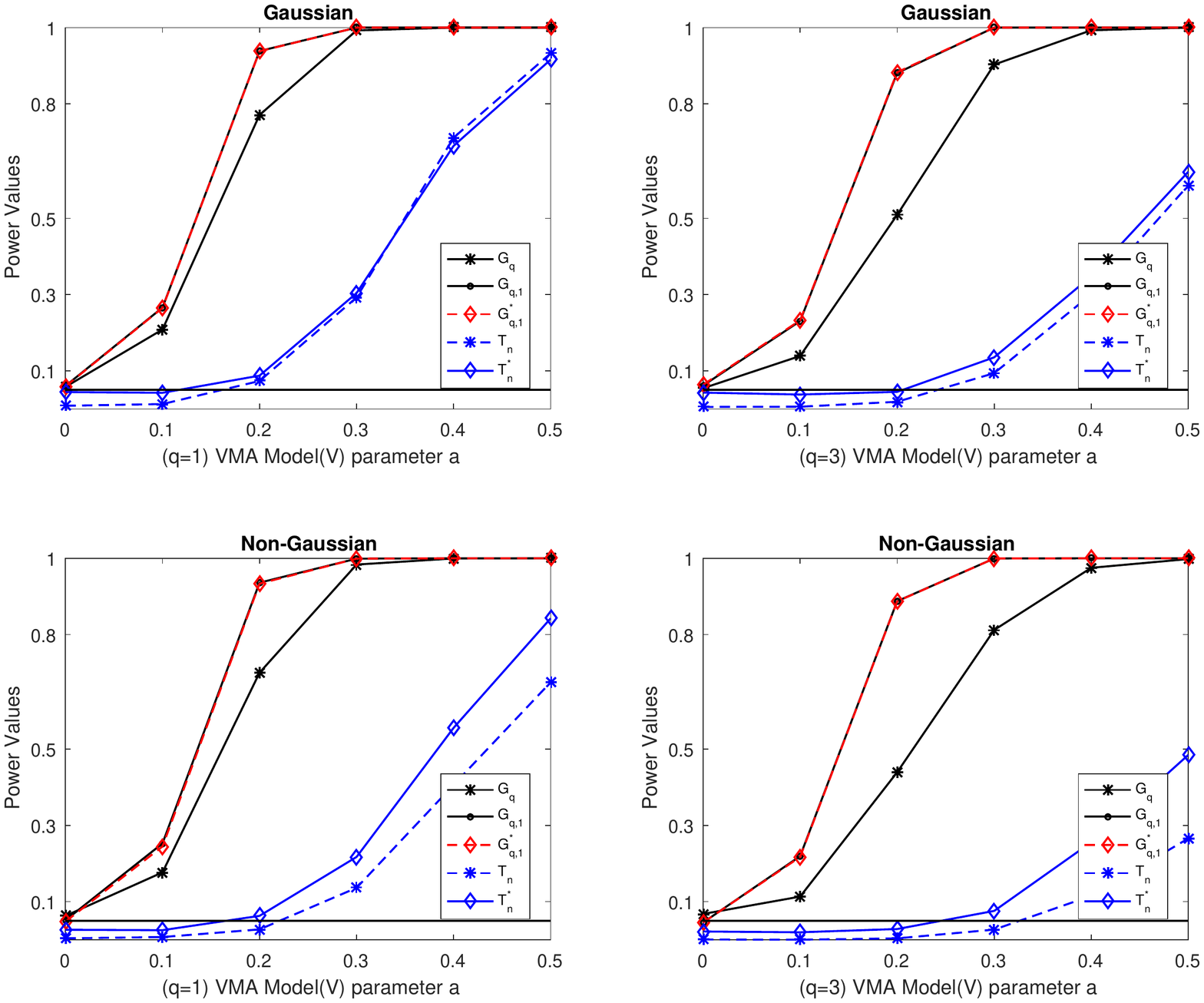}\\
  \includegraphics[width=\textwidth,trim={1.5cm 1cm 1.5cm 1cm},clip]{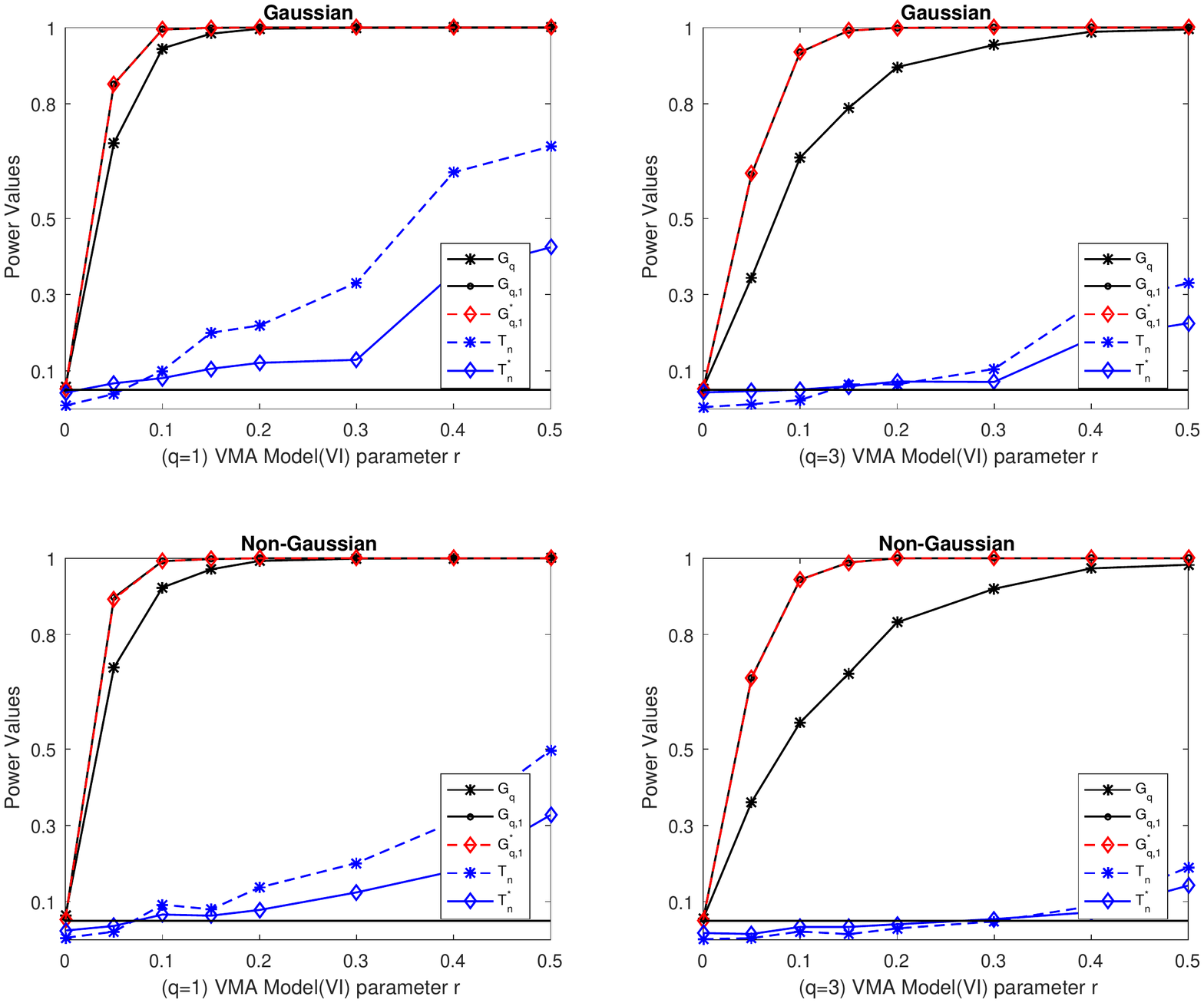}\\
  \caption{Power comparison under VMA(1) with
    $(p,T)=(20,100)$.
    Left column with $q=1$ and right column with $q=3$. First two
    rows under alternative model (V); last two rows  under alternative model (VI). 
  }\label{Fig:SF4}
\end{figure}


\clearpage

\appendix

\begin{center}
  {\bf SUPPLEMENT TO ``ON TESTING FOR
    HIGH-DIMENSIONAL WHITE NOISE''
  }
\end{center}

This supplemental article contains some technical
lemmas, the proof of Proposition 4.2 of the main article, and some
detailed tables of  simulation results which are discussed in the main paper.

\section{Technical lemmas}
\label{app:lemmas}

Hereafter, for a matrix $A = (a_{ij})$, define $\Re(A) = (\Re(a_{ij}))$ and $\Im(A) = (\Im(a_{ij}))$, where $\Re(z)$ and $\Im(z)$ are the real and imaginary parts of the complex number $z$.

\begin{lemma}\label{lem:1}
	Let $A$ be Hermitian. Then $\Tr(A\Re(A)) = \Tr(\Re^2(A))$.
\end{lemma}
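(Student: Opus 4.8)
The plan is to decompose $A$ into its real and imaginary parts and to exploit the symmetry properties that the Hermitian hypothesis forces on each part. Write $A = \Re(A) + i\,\Im(A)$, where $\Re(A)$ and $\Im(A)$ are real matrices. First I would record the consequences of $A = A^*$: since $a_{ij} = \overline{a_{ji}}$, the real parts satisfy $\Re(a_{ij}) = \Re(a_{ji})$ while the imaginary parts satisfy $\Im(a_{ij}) = -\Im(a_{ji})$. Hence $\Re(A)$ is a (real) symmetric matrix and $\Im(A)$ is a (real) skew-symmetric matrix; this parity observation is the crux of the argument.

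Next I would expand the product inside the trace and split off the real contribution,
\[
\Tr(A\Re(A)) = \Tr\big((\Re(A) + i\,\Im(A))\Re(A)\big) = \Tr(\Re^2(A)) + i\,\Tr(\Im(A)\Re(A)),
\]
so that it suffices to show the cross term $\Tr(\Im(A)\Re(A))$ vanishes.

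This is the only substantive step, and it follows from the elementary fact that the trace of the product of a skew-symmetric matrix and a symmetric matrix is zero. Writing $K = \Im(A)$ (skew-symmetric) and $S = \Re(A)$ (symmetric), one has
\[
\Tr(KS) = \Tr\big((KS)^{\top}\big) = \Tr(S^{\top}K^{\top}) = \Tr(S(-K)) = -\Tr(KS),
\]
whence $\Tr(KS) = 0$. Substituting back gives $\Tr(A\Re(A)) = \Tr(\Re^2(A))$, as claimed.

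I do not anticipate any genuine obstacle here: the statement is a short linear-algebra identity, and the only point requiring care is the correct identification of the parity (symmetric versus skew-symmetric) of the two parts of a Hermitian matrix, which is precisely what forces the imaginary cross term to drop out.
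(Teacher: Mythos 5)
Your proof is correct and follows essentially the same route as the paper: both arguments rest on the fact that for Hermitian $A$ the real part is symmetric and the imaginary part is skew-symmetric, so the cross term vanishes. The only difference is presentational — the paper cancels the cross term by an index swap in the double sum $\sum_{j,k}\Re(a_{kj})\Im(a_{kj})$, while you package the same cancellation as the standard identity $\Tr(KS)=0$ for $K$ skew-symmetric and $S$ symmetric.
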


\begin{proof}
Let $a_{ij}$ be the $(i,j)$th element of $A$. Then
\begin{align*}
\Tr(A \Re(A)) &=  \sum_{j,k} a_{jk}\Re(a_{kj}) = \sum_{j,k} \Re^2(a_{kj}) - i\sum_{j,k}\Re(a_{kj})\Im(a_{kj}) = \Tr(\Re^2(A)),
\end{align*}
where we used $\Re(a_{jk}) = \Re(a_{kj})$ and $\Im(a_{jk}) =
-\Im(a_{kj})$.
\end{proof}

\begin{lemma}\label{lem:2}
	Let $A$ be Hermitian and $\z_t$ has independent components with identical second and fourth order moments. Furthermore, $\E(z_{it}) = 0$, $\E|z_{it}|^2=1$, $\E|z_{it}|^4 = \nu_4 < \infty$ and $b = |\E(z_{it}^2)|^2$. Then
	\[\E(\z_t\z_t^*A\z_t\z_t^*) = \Tr(A)I_p + (\nu_4 - b - 2)\text{\em diag}(A) + 2\Re(A) + (b-1)A^T.\]
	If $A$ is symmetric, then by denoting the $k$th moment of $z_{it}$ as $\E(z^k)$, $k=2,4$,
	\[\E({\z}_t{\z}_t^TA\z_t\z_t^T) = \E^2(z^2)\Tr(A)I_p + (\E(z^4)-3\E^2(z^2))\text{\em diag}(A) + 2\E^2(z^2)A.\]
\end{lemma}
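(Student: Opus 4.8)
The plan is to compute the matrix $M := \E(\z_t\z_t^* A \z_t\z_t^*)$ entrywise and then repackage the result into the stated closed form. Writing the (fixed-time) vector as $\z_t=(z_1,\dots,z_p)$ with independent, mean-zero components, the $(i,j)$ entry is
\[
 M_{ij} = \sum_{k,l} A_{kl}\,\E\!\left[z_i\bar z_j \bar z_k z_l\right],
\]
so the whole computation reduces to evaluating the mixed fourth-moment tensor $\E[z_i\bar z_j\bar z_k z_l]$. First I would exploit independence together with $\E z_i=0$: any index occurring only once factors out an $\E z=0$, so the expectation vanishes unless every distinct index among $\{i,j,k,l\}$ appears at least twice. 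This leaves exactly the three ``two-pair'' patterns together with the fully-diagonal case, which I would encode through the inclusion--exclusion identity
\[
 \E[z_i\bar z_j\bar z_k z_l]
 = \delta_{ij}\delta_{kl} + \delta_{ik}\delta_{jl} + b\,\delta_{il}\delta_{jk}
 + (\nu_4 - 2 - b)\,\delta_{ijkl},
\]
where $\delta_{ijkl}=1$ iff $i=j=k=l$. The coefficients come from $\E|z|^2=1$ for the two patterns that pair a factor with its conjugate, from $\E(z^2)\overline{\E(z^2)}=|\E(z^2)|^2=b$ for the pattern pairing $z_iz_l$ against $\bar z_j\bar z_k$, and from $\E|z|^4=\nu_4$ when all four indices coincide; the last coefficient is $\nu_4$ minus the three lower terms, to undo the triple counting on the diagonal.

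Next I would substitute this expansion back and read off each Kronecker-delta term as a matrix: $\delta_{ij}\delta_{kl}$ contributes $\Tr(A)I_p$, the pattern $\delta_{ik}\delta_{jl}$ contributes $A$, the pattern $b\,\delta_{il}\delta_{jk}$ contributes $b\,A^\top$, and the diagonal term contributes $(\nu_4-b-2)\,\text{diag}(A)$. This yields the intermediate identity $M = \Tr(A)I_p + A + b\,A^\top + (\nu_4-b-2)\,\text{diag}(A)$. To match the stated form I would finally use that $A$ is Hermitian, so $A^\top=\overline A$ and hence $A+A^\top=2\Re(A)$; rewriting $A + b\,A^\top = 2\Re(A) + (b-1)A^\top$ gives precisely $\Tr(A)I_p + (\nu_4-b-2)\,\text{diag}(A) + 2\Re(A) + (b-1)A^\top$.

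For the symmetric case I would rerun the argument with $\z_t\z_t^\top$ in place of $\z_t\z_t^*$, so that no conjugates appear and the relevant tensor becomes $\E[z_iz_jz_kz_l]=(\E z^2)^2(\delta_{ij}\delta_{kl}+\delta_{ik}\delta_{jl}+\delta_{il}\delta_{jk})+(\E z^4-3(\E z^2)^2)\delta_{ijkl}$; substituting and using $A+A^\top=2A$ for symmetric $A$ produces the second displayed identity. I expect the only delicate point to be the bookkeeping of the fourth-moment tensor: one must track which positions carry a conjugate (this is exactly what separates the $b$-term from the unit-coefficient terms) and apply inclusion--exclusion so that the all-equal configuration is counted once. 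Everything else is a routine matching of index patterns to matrix entries.
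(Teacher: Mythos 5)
Your proposal is correct and follows essentially the same route as the paper: both compute $\E(\z_t\z_t^*A\z_t\z_t^*)$ entrywise by identifying which index configurations give nonzero mixed fourth moments, with coefficients $1$, $1$, $b$ for the pair patterns and $\nu_4-b-2$ correcting the all-equal case, then use $A+A^T=2\Re(A)$ for Hermitian $A$ (and $A+A^T=2A$ for symmetric $A$). The only difference is presentational — you organize the bookkeeping through a Kronecker-delta expansion of the fourth-moment tensor, while the paper does a direct case split on $i=j$ versus $i\neq j$ — and both yield the stated identities.
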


\begin{proof}
  The $(i,j)$th entry of $\E(\z_t\z_t^*A\z_t\z_t^*)$ is
\begin{align*}
\big(\E(\z_t\z_t^*A\z_t\z_t^*)\big)_{ij} &= \E\Big(\sum_{k,\ell}z_{it}z_{kt}^*a_{k\ell}z_{\ell t}z_{jt}^*\Big) = \sum_{k}a_{kk}\E(z_{it}|z_{kt}|^2z_{jt}^*) + \sum_{k\neq \ell}a_{k\ell}\E(z_{it}z_{kt}^*z_{\ell t}z_{j t}^*)\\
&= \left\{
\begin{array}{ll}
a_{ij} + a_{ji}\E(z_{it}^2)\E((z_{jt}^*)^2), & \hbox{$i\neq j$;} \\
(\nu_4-1)a_{ii} + \Tr(A), & \hbox{$i=j$}
\end{array}
                            \right.\\
  &= \left\{
\begin{array}{ll}
2\Re(a_{ij}) + a_{ji}(|\E(z_{it}^2)|^2-1), & \hbox{$i\neq j$;} \\
(\nu_4-1)a_{ii} + \Tr(A), & \hbox{$i=j$.}
\end{array}
\right.
\end{align*}
This completes the proof of the first part. For the second part,
\begin{align*}
\big(\E({\z}_t{\z}_t^TA \z_t\z_t^T)\big)_{ij} &= \E\Big(\sum_{k,\ell}{z}_{it}z_{kt}a_{k\ell}z_{\ell t}{z}_{jt}\Big) = \sum_{k}a_{kk}\E(z_{kt}^2{z}_{it}{z}_{jt}) + \sum_{k\neq \ell}a_{k\ell}\E({z}_{it}z_{kt}z_{\ell t}{z}_{jt})\\
&=\left\{
\begin{array}{ll}
(a_{ij} + a_{ji})\E^2(z^2), & \hbox{$i\neq j$;} \\
(\E(z^4)-\E^2(z^2))a_{ii} + \E^2(z^2)\Tr(A), & \hbox{$i=j$}
\end{array}
\right.\\ &=\left\{
\begin{array}{ll}
2a_{ij}\E^2(z^2), & \hbox{$i\neq j$;} \\
(\E(z^4)-\E^2(z^2))a_{ii} + \E^2(z^2)\Tr(A), & \hbox{$i=j$}
\end{array}
\right.
\end{align*}
This completes the proof of the lemma.
\end{proof}

\begin{lemma}\label{lem:3}
	Let the assumptions for $\z_t$ in Lemma \ref{lem:2} hold. Then for $s\neq t$ and $\tau=1,\ldots,q$,
	\begin{align*}
	V_1 &= \E(\z_t^*\Sigma_0\z_t) = \Tr(\Sigma_0),\\
	V_2 &= \E(\z_t^*\Sigma_0\z_t)^2 = \Tr^2(\Sigma_0) + (\nu_4-b-2)\Tr(\text{\em diag}^2(\Sigma_0)) + 2\Tr(\Re^2(\Sigma_0)) + (b-1)\Tr(\Sigma_0\Sigma_0^T),\\
	V_3 &= \E(\z_t^*\Sigma_0\z_s)^2 = b\Tr(\Sigma_0\Sigma_0^T),\\
	V_3' &= \E|\z_t^*\Sigma_0\z_s|^2 = \Tr(\Sigma_0^2),\\
	V_4 &= \E\big((\z_{t}^*\Sigma_0\z_{t+\tau})^2(\z_t^*\Sigma_0\z_{t-\tau})^2\big) \\
	&= b^2\Tr^2(\Sigma_0\Sigma_0^T) + (\E(\bar{z}^4)\E^2(z^2)-3b^2)\Tr(\text{\em diag}^2(\Sigma_0\Sigma_0^T)) + 2b^2\Tr(\Sigma_0\Sigma_0^T)^2,\\
	V_4' &= \E\big( |\z_{t+\tau}^*\Sigma_0\z_{t}|^2|\z_{t}^*\Sigma_0\z_{t-\tau}|^2 \big)\\
	&= \Tr^2(\Sigma_0^2) + (\nu_4-b-2)\Tr(\text{\em diag}^2(\Sigma_0^2)) + 2\Tr(\text{\em Re}^2(\Sigma_0^2)) + (b-1)\Tr(\Sigma_0^2(\Sigma_0^2)^T).
	\end{align*}
\end{lemma}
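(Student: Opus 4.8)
The plan is to evaluate the six moments one at a time, in increasing order of difficulty, treating each quadratic form $\z_a^*\Sigma_0\z_b$ as a trace and reducing everything to the two expectation identities already recorded in Lemma~\ref{lem:2}.

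First, $V_1$ is immediate: since $\E(\z_t\z_t^*)=I_p$, we get $\E(\z_t^*\Sigma_0\z_t)=\Tr\big(\Sigma_0\,\E(\z_t\z_t^*)\big)=\Tr(\Sigma_0)$. For $V_2$ I would write the scalar square as a trace, $(\z_t^*\Sigma_0\z_t)^2=\Tr(\Sigma_0\z_t\z_t^*\Sigma_0\z_t\z_t^*)$, so that $V_2=\Tr\big(\Sigma_0\,\E(\z_t\z_t^*\Sigma_0\z_t\z_t^*)\big)$; substituting the first identity of Lemma~\ref{lem:2} with $A=\Sigma_0$ turns this into $\Tr^2(\Sigma_0)+(\nu_4-b-2)\Tr(\Sigma_0\diag(\Sigma_0))+2\Tr(\Sigma_0\Re(\Sigma_0))+(b-1)\Tr(\Sigma_0\Sigma_0^T)$. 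Two elementary simplifications finish it: $\Tr(\Sigma_0\diag(\Sigma_0))=\Tr(\diag^2(\Sigma_0))$ (the trace only sees diagonal entries), and $\Tr(\Sigma_0\Re(\Sigma_0))=\Tr(\Re^2(\Sigma_0))$ by Lemma~\ref{lem:1}.

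The two mixed-time moments use independence of $\z_s$ and $\z_t$ for $s\neq t$. For $V_3$, expanding $(\z_t^*\Sigma_0\z_s)^2$ in coordinates factorizes the expectation across $s$ and $t$; the surviving terms pair indices through $\E(\bar z_{it}\bar z_{kt})=\overline{\E(z^2)}\,\delta_{ik}$ and $\E(z_{js}z_{ls})=\E(z^2)\,\delta_{jl}$, whose product contributes $|\E(z^2)|^2=b$ and leaves $b\sum_{i,j}(\Sigma_0)_{ij}^2=b\Tr(\Sigma_0\Sigma_0^T)$. For $V_3'$, writing $|\z_t^*\Sigma_0\z_s|^2=\z_t^*\Sigma_0\z_s\z_s^*\Sigma_0\z_t$ (using $\Sigma_0=\Sigma_0^*$) and integrating out $\z_s$ first via $\E(\z_s\z_s^*)=I_p$ collapses it to $\E(\z_t^*\Sigma_0^2\z_t)=\Tr(\Sigma_0^2)$ by the $V_1$ computation. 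The same collapse handles $V_4'$: because the three times $t\pm\tau,t$ are distinct, integrating out the two outer vectors through $\E(\z\z^*)=I_p$ reduces $\E\big(|\z_{t+\tau}^*\Sigma_0\z_t|^2|\z_t^*\Sigma_0\z_{t-\tau}|^2\big)$ to $\E\big((\z_t^*\Sigma_0^2\z_t)^2\big)$, which is exactly $V_2$ with $\Sigma_0$ replaced by the Hermitian matrix $\Sigma_0^2$, yielding the stated formula.

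The genuinely harder term is $V_4$, and I expect it to be the main obstacle. Here the forms are \emph{squares} rather than moduli, so integrating out the two outer, mutually independent vectors $\z_{t\pm\tau}$ no longer produces $I_p$ but instead the second-moment structure $\E(u_ju_l)=\E(z^2)\,\delta_{jl}$; each gives a factor $\E(z^2)\,\bar{\z}_t^{T}\Sigma_0\Sigma_0^{T}\bar{\z}_t$, so that $V_4=(\E(z^2))^2\,\E\big[(\bar{\z}_t^{T}M\bar{\z}_t)^2\big]$ with the complex \emph{symmetric} matrix $M=\Sigma_0\Sigma_0^{T}$. I would then apply the second (symmetric) identity of Lemma~\ref{lem:2} to the vector $\bar{\z}_t$, whose entries have second and fourth moments $\overline{\E(z^2)}$ and $\E(\bar z^4)$, via $\E[(\bar{\z}_t^{T}M\bar{\z}_t)^2]=\Tr\big(M\,\E(\bar{\z}_t\bar{\z}_t^{T}M\bar{\z}_t\bar{\z}_t^{T})\big)$. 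The delicate bookkeeping is in combining the conjugated moments, namely $(\E(z^2))^2\,\overline{\E(z^2)}^2=|\E(z^2)|^4=b^2$ and $(\E(z^2))^2\,\E(\bar z^4)=\E(\bar z^4)\E^2(z^2)$, and in tracking the three Wick pairings that generate the $\Tr^2(M)$ term, the $2\Tr(M^2)$ term, and the $-3$ correction in the diagonal contribution. Simplifying $\Tr(M\diag(M))=\Tr(\diag^2 M)$ and $\Tr(M^2)=\Tr\big((\Sigma_0\Sigma_0^{T})^2\big)$ then produces exactly the displayed expression for $V_4$.
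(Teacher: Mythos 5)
Your proposal is correct and follows essentially the same route as the paper: every moment is reduced, via independence across the three time indices, to the two identities of Lemma \ref{lem:2} (plus Lemma \ref{lem:1} to turn $\Tr(\Sigma_0\Re(\Sigma_0))$ into $\Tr(\Re^2(\Sigma_0))$), with the same $b^2$ and $\E(\bar z^4)\E^2(z^2)$ bookkeeping for $V_4$. The only (cosmetic) difference is the order of operations for $V_4$ and $V_4'$ — you integrate out the outer vectors $\z_{t\pm\tau}$ first, so Lemma \ref{lem:2} is applied to a deterministic matrix ($\Sigma_0\Sigma_0^T$ or $\Sigma_0^2$, the latter making $V_4'$ literally $V_2$ with $\Sigma_0$ replaced by $\Sigma_0^2$), whereas the paper conditions and applies the lemma with the random matrix $A=\Sigma_0\z_{t+\tau}\z_{t+\tau}^{T}\Sigma_0^T$ (resp.\ $\Sigma_0\z_{t+\tau}\z_{t+\tau}^{*}\Sigma_0$) before averaging; the computations and results coincide.
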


\begin{proof}
 We have
\[V_1 = \Tr[\E(\z_t\z_t^*)\Sigma_0] = \Tr(\Sigma_0).\]
For $V_2$, using Lemma \ref{lem:1} and the first part of Lemma \ref{lem:2},
\begin{align*}
V_2 &= \Tr[\E(\z_t\z_t^*\Sigma_0\z_t\z_t^*)\Sigma_0]\\
&= \Tr\left( \Tr(\Sigma_0)\Sigma_0 + (\nu_4-b-2)\diag(\Sigma_0)\Sigma_0 + 2\Re(\Sigma_0)\Sigma_0
+ (b-1)\Sigma_0^T\Sigma_0 \right)\\
&= \Tr^2(\Sigma_0) + (\nu_4-b-2)\Tr(\diag^2(\Sigma_0)) + 2\Tr(\Re^2(\Sigma_0)) + (b-1)\Tr(\Sigma_0\Sigma_0^T).
\end{align*}
Also,
\[V_3 = \E(\z_t^*\Sigma_0\z_s\z_s^T\Sigma_0^T\bar{\z}_t) = \Tr(\E(\bar{\z}_t\z_t^*)\Sigma_0\E(\z_s\z_s^T)\Sigma_0^T) = b\Tr(\Sigma_0\Sigma_0^T).\]
For $V_3'$,
\begin{align*}
V_3' &= \E(\z_t^*\Sigma_0\z_s\z_s^*\Sigma_0\z_t) = \Tr(\E(\z_t\z_t^*)\Sigma_0\E(\z_s\z_s^*)\Sigma_0) = \Tr(\Sigma_0^2).
\end{align*}

Using the second part of Lemma \ref{lem:2} with $A = \Sigma_0{\z}_{t+\tau}\z_{t+\tau}^T\Sigma_0^T$ and $z$ replaced by $\bar{z}$,
\begin{align*}
V_4 &= \E(\z_{t}^*\Sigma_0\z_{t+\tau}\z_{t+\tau}^T\Sigma_0^T\bar{\z}_t\z_t^*\Sigma_0\z_{t-\tau}\z_{t-\tau}^T\Sigma_0^T\bar{\z}_{t}) = \Tr\big(\E(\bar{\z}_t\bar{\z}_t^T\Sigma_0{\z}_{t+\tau}\z_{t+\tau}^T\Sigma_0^T\bar{\z}_t\bar{\z}_t^T)\Sigma_0\E(\z_{t-\tau}\z_{t-\tau}^T)\Sigma_0^T\big)\\
&= \E(z^2)\Tr\Big( \E^2(\bar{z}^2)\Tr(\E(\Sigma_0{\z}_{t+\tau}\z_{t+\tau}^T\Sigma_0^T))\Sigma_0\Sigma_0^T + (\E(\bar{z}^4)-3\E^2(\bar{z}^2))\diag(\E(\Sigma_0{\z}_{t+\tau}\z_{t+\tau}^T\Sigma_0^T))\Sigma_0\Sigma_0^T\\
&\quad\quad\quad\quad\quad + 2\E^2(\bar{z}^2)\E(\Sigma_0{\z}_{t+\tau}\z_{t+\tau}^T\Sigma_0^T)\Sigma_0\Sigma_0^T \Big)\\
&= b^2\Tr^2(\Sigma_0\Sigma_0^T) + (\E(\bar{z}^4)\E^2(z^2)-3b^2)\Tr(\diag^2(\Sigma_0\Sigma_0^T)) + 2b^2\Tr(\Sigma_0\Sigma_0^T)^2.
\end{align*}

Finally, using the first part of Lemma \ref{lem:2} with $A = \Sigma_0\z_{t+\tau}\z_{t+\tau}^*\Sigma_0$,
\begin{align*}
V_4' &= \E(\z_t^*\Sigma_0\z_{t+\tau}\z_{t+\tau}^*\Sigma_0\z_t\z_t^*\Sigma_0\z_{t-\tau}\z_{t-\tau}^*\Sigma_0\z_t)
= \Tr\big( \E(\z_t\z_t^*\Sigma_0\z_{t+\tau}\z_{t+\tau}^*\Sigma_0\z_t\z_t^*)\Sigma_0\E(\z_{t+\tau}\z_{t+\tau}^*)\Sigma_0 \big)\\
&= \Tr\Big( \Tr(\E(\Sigma_0\z_{t+\tau}\z_{t+\tau}^*\Sigma_0))\Sigma_0^2 + (\nu_4-b-2)\diag(\E(\Sigma_0\z_{t+\tau}\z_{t+\tau}^*\Sigma_0))\Sigma_0^2\\
&\quad\quad\quad\; + 2\Re(\E(\Sigma_0\z_{t+\tau}\z_{t+\tau}^*\Sigma_0))\Sigma_0^2 + (b-1)(\E(\Sigma_0\z_{t+\tau}\z_{t+\tau}^*\Sigma_0))^T\Sigma_0^2 \Big)\\
&=\Tr^2(\Sigma_0^2) + (\nu_4-b-2)\Tr(\diag^2(\Sigma_0^2)) + 2\Tr(\Re^2(\Sigma_0^2)) + (b-1)\Tr(\Sigma_0^2(\Sigma_0^2)^T).
\end{align*}
The proof is now complete.
\end{proof}

\begin{lemma}\label{lem:4}
  Let the assumptions for $\z_t$ in Lemma \ref{lem:2} hold. Then
  $G_q=\sum_{\tau=1}^q\Tr(\wh\Sigma_{\tau}\wh\Sigma_{\tau}^*)$ has
  expectation and variance given by
  \begin{align*}
    \E (G_q) & = q V_1^2/T, \\
    \text{\em Var}(G_q) & = \frac{q(V_2-V_1^2)^2+2q(\Re(V_4) + V_4') +
      q(T-2)(V_3^2 + (V_3')^2)}{T^3} +
    \frac{4q^2V_1^2(V_2-V_1^2)}{T^3}.
  \end{align*}
\end{lemma}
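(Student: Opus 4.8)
The plan is to expand $\Tr(\widehat\Sigma_\tau\widehat\Sigma_\tau^*)$ into a double sum of products of bilinear forms in the $\z_t$'s and then reduce every moment to the quantities $V_1,\dots,V_4'$ supplied by Lemma~\ref{lem:3}. Since $\Sigma_0^{1/2}$ is Hermitian, $\x_a^*\x_b=\z_a^*\Sigma_0\z_b$, so
\[
\Tr\lb\widehat\Sigma_\tau\widehat\Sigma_\tau^*\rb
=\frac1{T^2}\sum_{s,t=1}^T\lb\z_{t-\tau}^*\Sigma_0\z_{s-\tau}\rb\lb\z_s^*\Sigma_0\z_t\rb,
\]
with indices read modulo $T$ by the circular convention. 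Writing $w_t=\z_t^*\Sigma_0\z_t$, I would split this sum into its diagonal part ($s=t$), $D_\tau=T^{-2}\sum_t w_{t-\tau}w_t$, and its off-diagonal part ($s\ne t$), $O_\tau=T^{-2}\sum_{s\ne t}(\z_{t-\tau}^*\Sigma_0\z_{s-\tau})(\z_s^*\Sigma_0\z_t)$; note $O_\tau$ is real since its $(s,t)$ and $(t,s)$ summands are complex conjugates.

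For the mean, independence across times and $\E\z_t=0$ kill every off-diagonal summand (for $T>2q$ no accidental coincidence $2\tau\equiv0$ can occur), while each diagonal summand factorizes, as $t-\tau\ne t$, into $\E w_{t-\tau}\,\E w_t=V_1^2$. Hence $\E\,\Tr(\widehat\Sigma_\tau\widehat\Sigma_\tau^*)=V_1^2/T$ and $\E(G_q)=qV_1^2/T$.

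For the variance I would use $\var(G_q)=\sum_{\tau,\tau'=1}^q[\cov(D_\tau,D_{\tau'})+\cov(O_\tau,O_{\tau'})+2\cov(D_\tau,O_{\tau'})]$ and evaluate each block by classifying the time-index coincidence patterns. In the purely diagonal block the $w_t$ are i.i.d.\ with mean $V_1$ and variance $V_2-V_1^2$; after centering the $w_t$ and pairing, the single configuration in which both indices of $w_{t-\tau}w_t$ and $w_{t'-\tau'}w_{t'}$ coincide forces $\tau=\tau'$ and contributes $q(V_2-V_1^2)^2/T^3$, whereas the four distinct single-overlap patterns (matching one index of the first pair with one of the second) each leave one squared centered factor against two factors at the mean $V_1$, giving, after the free-index sum and summation over the $q^2$ lag pairs, $4q^2V_1^2(V_2-V_1^2)/T^3$. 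The off-diagonal block $\cov(O_\tau,O_{\tau'})$ is the eight-$\z$ computation: it is diagonal in the lag at the relevant order, and within a fixed lag the leading patterns $(s',t')=(s,t)$ and $(s',t')=(t,s)$ decouple the two bilinear pairs into $\E(\z^*\Sigma_0\z')^2=V_3$ and $\E|\z^*\Sigma_0\z'|^2=V_3'$, producing $q(T-2)(V_3^2+(V_3')^2)/T^3$—the $(T-2)$ being the exact count of the free index after removing the two coincidences that instead create triple collisions—while those triple-collision patterns, where three of the four times merge at a common $\z$, yield the fourth-order forms $\Re(V_4)$ and $V_4'$ with multiplicity $2q$. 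Finally I would verify that $\cov(D_\tau,O_{\tau'})$ and, for $\tau\ne\tau'$, the cross-lag $\cov(O_\tau,O_{\tau'})$ contribute nothing to the listed terms. Assembling the blocks gives the stated identity.

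The main obstacle is the exhaustive bookkeeping of $\cov(O_\tau,O_{\tau'})$: one must enumerate all ways the eight time indices $\{t-\tau,s-\tau,s,t\}$ and $\{t'-\tau',s'-\tau',s',t'\}$ can coincide, separate the ``factored'' patterns (two independent squared forms, giving $V_3^2$ and $(V_3')^2$) from the ``chained'' triple-collision patterns (giving $V_4$ and $V_4'$), correctly track the Hermitian conjugation that distinguishes $V_3$ from $V_3'$ and $\Re(V_4)$ from $V_4'$, and count each multiplicity precisely—in particular the $(T-2)$ and the factors $2q$ and $4q^2$—while confirming that all remaining patterns are of smaller order. Each surviving pattern is then read off directly from Lemma~\ref{lem:3}.
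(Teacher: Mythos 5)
Your proposal is correct and takes essentially the same route as the paper's own proof: both expand $G_q=\sum_{\tau}Q_\tau$ into sums of bilinear forms $\z_s^*\Sigma_0\z_t$, classify the time-index coincidence patterns (same-lag pairings giving $V_3^2$, $(V_3')^2$ and the triple-collision terms $2\Re(V_4)+2V_4'$, plus the mean-level overlaps producing $(V_2-V_1^2)^2$ and the cross-lag/same-lag contributions that combine into $4q^2V_1^2(V_2-V_1^2)$), and read each surviving pattern off Lemma~\ref{lem:3}. Your split of each $Q_\tau$ into a diagonal part in the centered quadratic forms $w_t$ and an off-diagonal part is merely a bookkeeping variant of the paper's direct case enumeration, and it reproduces the stated formula, including the $(T-2)$ count.
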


\begin{proof}
  Write 
  \[     G_q = \sum_{\tau=1}^q Q_\tau, \quad \text{with}\quad Q_\tau = \frac{1}{T^2}\sum_{t,s=1}^T\E(\x_s^*\x_t\x_{t-\tau}^*\x_{s-\tau}).
  \] 
Since $\x_t = \Sigma_0^{1/2}\z_t$ with the $\z_t$'s independent of each other, we have
\begin{align*}
\E(Q_{\tau}) & =\frac{1}{T}\E(\x_t^*\x_t\x_{t-\tau}^*\x_{t-\tau}) = \frac{1}{T}\E^2(\x_t^*\x_t) = \frac{\Tr^2(\Sigma_0)}{T} = V_1^2/T.
\end{align*}
The value of $\E (G_q)$ follows.

To evaluate $\E(Q_\tau^2)$, observe that
\begin{align*}
\E(Q_\tau^2) &= \frac{1}{T^4}\sum_{t_1,t_2,s_1,s_2=1}^T \E(\z_{s_1}^*\Sigma_0\z_{t_1}\z_{t_1-\tau}^*\Sigma_0\z_{s_1-\tau}\z_{s_2}^*\Sigma_0\z_{t_2}\z_{t_2-\tau}^*\Sigma_0\z_{s_2-\tau}).
\end{align*}
Let $E(t_1,s_1,t_2,s_2)$ be the expectation on the right hand side above. We detail the cases where this expectation is non-zero below.
\begin{itemize}
	\item[(I)] $s_1=t_1$, $s_2=t_2$. Sub-cases:
	\begin{itemize}
		\item[i.] $t_1=t_2(=t)$:
		
		$T^{-4}\sum_{t=1}^TE(t,t,t,t) = \frac{1}{T^4}\sum_{t=1}^T\E(\z_t^*\Sigma_0\z_t)^2\E(\z_{t-\tau}^*\Sigma_0\z_{t-\tau})^2 = \frac{1}{T^4}\sum_{t=1}^TV_2^2 = V_2^2/T^3$;
		
		\item[ii.] $t_1=t_2-\tau(=t)$:
		
		$T^{-4}\sum_{t=1}^TE(t,t,t+\tau,t+\tau) = \frac{1}{T^4}\sum_{t=1}^T\E(\z_t^*\Sigma_0\z_t)^2\E(\z_{t+\tau}^*\Sigma_0\z_{t+\tau})\E(\z_{t-\tau}^*\Sigma_0\z_{t-\tau})
		=V_2V_1^2/T^3$.
		
		\item[iii.] $t_1=t_2+\tau(=t)$:
		
		$T^{-4}\sum_{t=1}^TE(t,t,t-\tau,t-\tau) = V_2V_1^2/T^3$.
		
		\item[iv.] Otherwise:
		\begin{align*}
          &		T^{-4}\sum_{s\neq t,t+\tau,t-\tau}E(t,t,s,s) \\
          &= \frac{1}{T^4}\sum_{s\neq t,t+\tau,t-\tau}^T\E(\z_{t}^*\Sigma_0\z_{t})\E(\z_{t-\tau}^*\Sigma_0\z_{t-\tau})\E(\z_{s}^*\Sigma_0\z_{s})\E(\z_{s-\tau}^*\Sigma_0\z_{s-\tau})\\
		&=(T^2-3T)V_1^4/T^4.
		\end{align*}
	\end{itemize}
	
	\item[(II)] $s_1=s_2, t_1=t_2$. Sub-cases (not overlapping with (I))
	\begin{itemize}
		\item[i.] $s_1=t_1+\tau(=t)$:
		
		$T^{-4}\sum_{t=1}^TE(t-\tau,t,t-\tau,t) = \frac{1}{T^4}\sum_{t=1}^T\E[(\z_{t-2\tau}^*\Sigma_0\z_{t-\tau})^2(\z_{t}^*\Sigma_0\z_{t-\tau})^2] = \bar{V}_4/T^3$.
		
		\item[ii.] $s_1=t_1-\tau(=t)$:
		
		$T^{-4}\sum_{t=1}^TE(t+\tau,t,t+\tau,t) = V_4/T^3$.
		
		\item[iii.] Otherwise:
		
		$T^{-4}\sum_{s\neq t+\tau,t-\tau}E(t,s,t,s) = (T^2-2T)V_3^2/T^4$.
	\end{itemize}

	\item[(III)] $s_1=t_2, s_2=t_1$. Sub-cases (not overlapping with (I) and (II))
	\begin{itemize}
		\item[i.] $s_2=t_2+\tau(=t)$:
		
		$T^{-4}\sum_{t=1}^TE(t,t-\tau,t-\tau,t) = V_4'/T^3$.
		
		\item[ii.] $s_2=t_2-\tau(=t)$:
		
		$T^{-4}\sum_{t=1}^TE(t,t+\tau,t+\tau,t) = V_4'/T^3$.
		
		\item[iii.] Otherwise:
		
		$T^{-4}\sum_{s\neq t+\tau,t-\tau}E(t,s,s,t) = (T^2-2T)(V_3')^2/T^4$.
	\end{itemize}
\end{itemize}

With the above,
\begin{align*}
\E(Q_{\tau}^2) &= \frac{V_2^2 + 2V_1^2V_2 + (T-3)V_1^4}{T^3} + \frac{2\Re(V_4) + (T-2)V_3^2}{T^3}
+ \frac{2V_4' + (T-2)(V_3')^2}{T^3}, \text{ so that }\\
\var(Q_\tau) &= \frac{V_2^2 + 2V_1^2V_2 -3V_1^4 + 2(\Re(V_4) + V_4') + (T-2)(V_3^2+(V_3')^2)}{T^3}.
\end{align*}

For $k \neq \ell$ and both $k,\ell$ are non-zero, consider
\begin{align*}
\E(Q_kQ_{\ell}) &= \frac{1}{T^4}\sum_{t_1,s_1,t_2,s_2=1}^T\E(\z_{s_1}^*\Sigma_0\z_{t_1}\z_{t_1-k}^*\Sigma_0\z_{s_1-k}\z_{s_2}^*\Sigma_0\z_{t_2}\z_{t_2-\ell}^*\Sigma_0\z_{s_2-\ell})\\
&= \frac{1}{T^4}\sum_{t,s=1}^T\E(\z_{s}^*\Sigma_0\z_{s}\z_{s-k}^*\Sigma_0\z_{s-k}\z_{t}^*\Sigma_0\z_{t}\z_{t-\ell}^*\Sigma_0\z_{t-\ell})\\
&= \frac{4V_2V_1^2}{T^3} + \frac{(T^2-4T)V_1^4}{T^4},
\end{align*}
where the last equality used the fact that when $t=s,t=s-k, s=t-\ell$ or $s-k=t-\ell$, the resulting expectation in the summation in the second equality is $V_2V_1^2$, otherwise it is $V_1^4$. Hence
\begin{align*}
\cov(Q_k,Q_{\ell}) &= \frac{4V_1^2(V_2-V_1^2)}{T^3}.
\end{align*}
With the above, we have
\begin{align*}
\var(G_q) &= \sum_{\tau=1}^q\var(Q_\tau) + \sum_{k\neq\ell}\cov(Q_k,Q_\ell)\\
&= q\var(Q_\tau) + q(q-1)\cov(Q_k,Q_\ell)\\
&= \frac{q(V_2-V_1^2)^2+2q(\Re(V_4) + V_4') + q(T-2)(V_3^2 + (V_3')^2)}{T^3} + \frac{4q^2V_1^2(V_2-V_1^2)}{T^3}.
\end{align*}
This completes the proof of the lemma.
\end{proof}

\begin{lemma}\label{lem:V5V6}
		Assume that $\Sigma_0$ is semipositive definite and $\z_t$ has real-valued independent components with identical eight order moments, $\E(z_{it}) = 0$, $\E|z_{it}|^2=1$, $\E|z_{it}|^k = \nu_k < \infty,~k=1,\cdots,8$. Let $\imath$ denotes a $p\times 1$ vector with unit elements and $\odot$ denote Hadamard product, i.e. for any  two given matrices $A,~ B$ with same size,  $\lb A\odot B\rb_{ij}=A_{ij}B_{ij}$,  then
\begin{align*}
	V_5 &= \E(\z_t^*\Sigma_0\z_t)^3 = \Tr^3(\Sigma_0)+6\Tr(\Sigma_0)\Tr(\Sigma_0^2)+8\Tr(\Sigma_0^3)\\
	&+(\nu_4-3)\left[3\Tr\lb D^2(\Sigma_0)\rb\Tr(\Sigma_0)+4\Tr\lb  D(\Sigma_0^2)\Sigma_0\rb+8\Tr\lb D(\Sigma_0)\Sigma_0^2\rb\right]\\
	&+\nu_3^2\left[ 4\Tr\lb \Sigma_0(\Sigma_0\odot \Sigma_0)\rb+2\imath^* D(\Sigma_0)\Sigma_0D(\Sigma_0)\imath +4\imath^*  D^2(\Sigma_0)\Sigma_0\imath\right]\\
	&+\lb \nu_6-10\nu_3^2-15(\nu_4-3)-15\rb\Tr\lb \Sigma_0\lb \Sigma_0\odot \Sigma_0\rb\rb,\\
V_6 &= \E(\z_t^*\Sigma_0\z_t)^4=\Tr^4(\Sigma_0)+12\Tr(\Sigma_0^2)\Tr^2(\Sigma_0)+12\Tr^2(\Sigma_0^2)+32\Tr(\Sigma_0)\Tr(\Sigma_0^3)\\
&+48\Tr(\Sigma_0^4)+(\nu_4-3)\left\{6\Tr^2(\Sigma_0)\Tr(\Sigma_0\odot\Sigma_0)+12\Tr(\Sigma_0^2)\Tr(\Sigma_0\odot\Sigma_0)\right.\\
&\left. +48\Tr(\Sigma_0)\Tr(\Sigma_0\odot\Sigma_0^2)+48\Tr(D^2(\Sigma_0^2))+96\Tr(D(\Sigma_0)\Sigma_0^3)\right\}\\
&+(\nu_4-3)^2\left\{3\Tr^2(\Sigma_0\odot\Sigma_0)+24\imath^* D(\Sigma_0)\lb\Sigma_0 \odot \Sigma_0\rb D(\Sigma_0)\imath+8\imath^*\lb\Sigma_0\odot\Sigma_0\odot\Sigma_0\odot\Sigma_0\rb\imath\right\}
\\
&+\lb\nu_6-15(\nu_4-3)-10\nu_3^2-15\rb\left\{4\Tr(\Sigma_0)\Tr(\Sigma_0\odot\Sigma_0\odot\Sigma_0)+24\Tr\lb\Sigma_0\odot\Sigma_0\odot(\Sigma_0^2)\rb\right\}\\
&+2\nu_3^2 \left\{ 12\imath^*\lb D(\Sigma_0)\Sigma_0D(\Sigma_0)\rb\imath\cdot \Tr(\Sigma_0) +24 \imath^*\lb D(\Sigma_0)\Sigma_0^2D(\Sigma_0)\rb\imath \right.\\
&\left. +8\imath^*\lb \Sigma_0\odot\Sigma_0\odot\Sigma_0\rb\imath\cdot\Tr(\Sigma_0) +48\imath^*(\Sigma_0\odot\Sigma_0)\Sigma_0D(\Sigma_0)\imath+48\Tr\lb\Sigma_0^2\lb \Sigma_0\odot\Sigma_0\rb\rb\right\}\\
&+2\nu_3(\nu_5-10\nu_3)\left\{12\imath^* \lb D(\Sigma_0)\Sigma_0D^2\lb \Sigma_0\rb\rb\imath+16\imath^*\lb\Sigma_0\odot\Sigma_0\odot\Sigma_0\rb D(\Sigma_0) \imath\right\}+\\
&\lb \nu_8-28\nu_6+210(\nu_4-3)-35(\nu_4-3)^2-56\nu_3(\nu_5-10\nu_3)+315\rb\Tr\lb \Sigma_0\odot\Sigma_0\odot\Sigma_0\odot\Sigma_0\rb,\\
V_7&=\mathbb{E}\lb \z_s^*\Sigma_0\z_{s+\tau}\z_{s+\tau}^*\Sigma_0\z_{s+\tau}\z_s^*\Sigma_0\z_{s-\tau}\z_{s-\tau}^*\Sigma_0\z_{s-\tau}\rb=\nu_3^2\cdot\imath^*\lb D(\Sigma_0)\Sigma_0^2D(\Sigma_0)\rb \imath.
\end{align*}
\end{lemma}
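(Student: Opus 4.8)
The three quantities are moments of the quadratic form $Q=\z_t^*\Sigma_0\z_t=\sum_{i,j}z_{it}\sigma_{ij}z_{jt}$ (and, for $V_7$, a product of four such forms across independent time points), so the natural tool is the combinatorial moment method for independent coordinates. The plan is to expand each power $Q^k$ ($k=3$ for $V_5$, $k=4$ for $V_6$) into a sum over $2k$ index positions carrying entries $\sigma_{ij}$, and to evaluate the expectation coordinate-by-coordinate using independence of the $z_{it}$. Because $\E z_{it}=0$ and $\E z_{it}^2=1$, only those configurations survive in which every distinct coordinate value is repeated at least twice; a block in which $\ell$ positions coincide contributes the factor $\nu_\ell$ (with $\nu_1=0$, $\nu_2=1$). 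Thus I would classify the contributions by the set partition of the $2k$ positions into equal-value blocks, and for each admissible partition record (i) its combinatorial multiplicity, (ii) the product of moment factors $\prod_B\nu_{|B|}$, and (iii) the residual sum over $\Sigma_0$ entries, which is then rewritten in closed form.

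The key observation for (iii) is that blocks of size $2$ produce the familiar Wick/trace pieces --- e.g.\ the Gaussian part $\Tr^3(\Sigma_0)+6\Tr(\Sigma_0)\Tr(\Sigma_0^2)+8\Tr(\Sigma_0^3)$ for $V_5$ --- while every block of size $\ge 3$ forces three or more $\sigma$-indices to collapse onto a common value, which is exactly what manufactures the entrywise (Hadamard) and diagonal structures $\Sigma_0\odot\Sigma_0$, $D(\Sigma_0)$ and the bilinear forms $\imath^* M\imath$ appearing in the statement. I would then regroup the raw partition sums into the cumulant-type coefficients displayed: $(\nu_4-3)$ isolates the fourth cumulant, $\nu_3^2$ collects the two-triple-block configurations, and composite coefficients such as $\nu_6-10\nu_3^2-15(\nu_4-3)-15$ and $\nu_8-28\nu_6+210(\nu_4-3)-35(\nu_4-3)^2-56\nu_3(\nu_5-10\nu_3)+315$ are precisely the higher cumulants relative to the Gaussian reference. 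One checks consistency by noting that each such coefficient vanishes when $\z$ is Gaussian ($\nu_3=\nu_5=0$, $\nu_4=3$, $\nu_6=15$, $\nu_8=105$), so that only the pure trace terms remain, as they must for a Gaussian quadratic form.

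The main obstacle is $V_6$: with eight index positions the list of admissible partitions (all blocks of size $\ge 2$ over eight elements) is long, and three bookkeeping tasks must be kept perfectly aligned --- the multiplicity of each partition, its assignment of moment/cumulant weights, and the translation of each contraction pattern into the correct trace, Hadamard product, or $\imath^* M\imath$ expression (distinguishing, for instance, $\Tr(D^2(\Sigma_0^2))$ from $\imath^*(\Sigma_0\odot\Sigma_0\odot\Sigma_0\odot\Sigma_0)\imath$). I would organize this by first stratifying the partitions according to their block-size profile, and then, within each profile, by how the blocks distribute across the four quadratic factors of $Q^4$, using the building blocks $V_1,\dots,V_4'$ of Lemma~\ref{lem:3} and the identity of Lemma~\ref{lem:2} to shortcut the lower-order sub-sums.

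By contrast, $V_7$ is immediate: the three time points $s,s\pm\tau$ are distinct, so the expectation factors over the independent vectors $\z_s,\z_{s+\tau},\z_{s-\tau}$. Each of the two vectors $\z_{s\pm\tau}$ enters through three index positions (one linear, one quadratic), forcing its three indices to coincide and yielding a factor $\nu_3$, while $\z_s$ enters through only two positions and contracts to the identity. The surviving sum then collapses directly to $\nu_3^2\sum_{a,b,f}\sigma_{bb}\,\sigma_{ab}\,\sigma_{af}\,\sigma_{ff}=\nu_3^2\,\imath^*\bigl(D(\Sigma_0)\Sigma_0^2D(\Sigma_0)\bigr)\imath$, which is the stated value.
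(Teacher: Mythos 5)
Your approach is sound, and for $V_7$ it coincides exactly with the paper's argument: the paper likewise factors the expectation over the independent vectors $\z_{s+\tau},\z_s,\z_{s-\tau}$, uses $\E(\z\z^*\Sigma_0\z)=\nu_3 D(\Sigma_0)\imath$, and contracts through $\E(\z_s\z_s^*)=I_p$ to obtain $\nu_3^2\,\imath^*D(\Sigma_0)\Sigma_0^2D(\Sigma_0)\imath$. For $V_5$ and $V_6$, however, you take a genuinely different and more self-contained route: the paper does not perform any partition expansion at all, but observes that these are moments of quadratic forms $\E(\z^*A\z)^k$ with $A=\Sigma_0$ and quotes the closed-form expressions from Ullah (2004) and Bao--Ullah (2010). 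Your plan --- stratify the $2k$ index positions by set partitions whose blocks all have size at least two, weight a block of size $\ell$ by $\nu_\ell$, and translate each contraction pattern into traces, Hadamard products, or $\imath^*M\imath$ forms --- is precisely the machinery underlying those references, so it is a legitimate replacement and has the merit of being verifiable from first principles. What it costs is the bookkeeping you yourself flag: for $V_6$ the fine structure of the stated formula (for instance the distinction between $\Tr(D^2(\Sigma_0^2))$ and $\imath^*(\Sigma_0\odot\Sigma_0\odot\Sigma_0\odot\Sigma_0)\imath$, or between $\imath^*D(\Sigma_0)\Sigma_0^2D(\Sigma_0)\imath$ and $\imath^*(\Sigma_0\odot\Sigma_0)\Sigma_0D(\Sigma_0)\imath$) is determined entirely by the partition multiplicities, which your sketch does not actually compute. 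The Gaussian consistency check you propose is a correct and useful necessary condition (all non-trace coefficients do vanish at $\nu_3=\nu_5=0$, $\nu_4=3$, $\nu_6=15$, $\nu_8=105$), but it cannot certify the individual constants; to claim the precise coefficients you would either have to carry the enumeration to completion or, as the paper does, appeal to the published quadratic-form moment formulas.
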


\begin{proof}
  Moments of quadratic forms are well studied in the fields of econometrics and statistics. Specifically, there have been long interest in deriving $\mathbb{E}\lb \prod_{i=1}^n Q_i\rb$, where $Q_i=y^*A_iy$, $A_i$ are $p\times p$ non-stochastic symmetric matrices and  $y$ is an $p\times 1$ random vector with mean $\mu$ and identity covariance matrix. $y^*$ represents transpose of $y$. Both $V_5$ and $V_6$ are moments of quadratic forms as a special case where all $A_i's$ equal to $\Sigma_0$, thus we can write down the results by directly referring to calculations and techniques presented in previous works of \citep{U04} and \citep{BU10}.  As for $V_7$,
  \begin{align*}
    V_7&=\E\lb \z_{s+\tau}^*\Sigma_0\z_{s+\tau}\z_{s+\tau}^*\rb\Sigma_0\E(\z_s\z_s^*)\Sigma_0\E \lb\z_{s-\tau}\z_{s-\tau}^*\Sigma_0\z_{s-\tau} \rb\\
         &=\nu_3^2\cdot\imath^*\lb D(\Sigma_0)\Sigma_0^2D(\Sigma_0)\rb \imath,
  \end{align*}
  because  according to \cite{U04}, 
  $\E \lb\z_{t}\z_{t}^*\Sigma_0\z_{t} \rb=\nu_3\lb {\bf I}_p\odot\Sigma_0 \rb\imath.$ 

\end{proof}

\begin{lemma}\label{lem:cov}
	Assume the same conditions as in Lemma~\ref{lem:V5V6}, then $p\hat{s}_1^2=\frac{1}{p}\Tr^2(\Sigma_0)$ has expectation and variance given by
	\begin{align*}
	\mathbb{E}(p\hat{s}_1^2)&=\frac{V_1^2}{p}-\frac{1}{pT}\lb V_1^2-V_2\rb,\\
	\var(p\hat{s}_1^2)&=\frac{1}{p^2T^3}V_6+\lb \frac{4}{p^2T^2}- \frac{4}{p^2T^3}\rb V_1V_5+\lb \frac{2}{p^2T^2}- \frac{3}{p^2T^3}\rb V_2^2\\
	&+\lb \frac{4}{p^2T}- \frac{16}{p^2T^2}+\frac{12}{p^2T^3}\rb V_1^2V_2+\lb -\frac{4}{p^2T}+\frac{10}{p^2T^2}-\frac{6}{p^2T^3}\rb V_1^4.
	\end{align*}
	The covariance between $G_q$ and $p\hat{s}_1^2$ is
	\begin{align*}
	\cov \lb G_q,p\hat{s}_1^2\rb&=\lb \frac{4q}{pT^2}- \frac{10q}{pT^3}\rb V_1^2\lb V_2-V_1^2\rb-\frac{4q}{pT^3}V_1^4+\frac{2q}{pT^3}V_1V_5\\
	&+\frac{2q}{pT^3}V_2^2+\frac{4q}{pT^3}V_7.
	\end{align*}
\end{lemma}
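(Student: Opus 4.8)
The plan is to express both statistics through the scalar quadratic forms $w_t:=\z_t^*\Sigma_0\z_t$, which are independent across $t$ and whose first four moments are precisely $V_1,V_2,V_5,V_6$ furnished by Lemmas~\ref{lem:3} and~\ref{lem:V5V6}. Since $\hat{s}_1=(pT)^{-1}\sum_{t=1}^T w_t$, we have $p\hat{s}_1^2=(pT^2)^{-1}\lb\sum_t w_t\rb^2$, while $G_q=\sum_{\tau=1}^q Q_\tau$ with $Q_\tau=T^{-2}\sum_{s,t}\z_s^*\Sigma_0\z_t\,\z_{t-\tau}^*\Sigma_0\z_{s-\tau}$. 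The mean of $p\hat{s}_1^2$ follows at once by splitting the double sum in $\lb\sum_t w_t\rb^2$ into the $T$ diagonal tuples (each contributing $V_2$) and the $T(T-1)$ off-diagonal ones (each contributing $V_1^2$), giving $V_1^2/p-(V_1^2-V_2)/(pT)$.

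For $\var(p\hat{s}_1^2)$ I would expand $\E\lb\sum_t w_t\rb^4$ by grouping the quadruples $(t_1,t_2,t_3,t_4)$ according to the five coincidence patterns: all four equal ($T$ tuples, weight $V_6$), three-plus-one ($4T(T-1)$, weight $V_1V_5$), two-plus-two ($3T(T-1)$, weight $V_2^2$), two-plus-one-plus-one ($6T(T-1)(T-2)$, weight $V_1^2V_2$) and all distinct ($T(T-1)(T-2)(T-3)$, weight $V_1^4$). Subtracting $\lb\E(p\hat{s}_1^2)\rb^2$ and dividing by $p^2T^4$, the polynomials in $T$ collapse to the five stated coefficients; this step is purely mechanical.

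The covariance is where the work lies. By linearity and the stationarity built in through the circular convention $\x_t=\x_{T+t}$, every lag contributes identically, so $\cov(G_q,p\hat{s}_1^2)=q\,\cov(Q_\tau,p\hat{s}_1^2)$ for a single fixed $\tau$. I would split $Q_\tau$ into its diagonal part $T^{-2}\sum_t w_t w_{t-\tau}$ and its off-diagonal part ($s\ne t$). The diagonal part is a product of independent $w$'s, and the same coincidence bookkeeping applied to $\cov\lb w_t w_{t-\tau},\,w_u w_v\rb$ shows that the configurations in which exactly one of $u,v$ meets $\{t,t-\tau\}$ produce $V_1^2(V_2-V_1^2)$ at order $T^{-2}$, the configuration $\{u,v\}=\{t,t-\tau\}$ produces $V_2^2$, and the configuration $u=v\in\{t,t-\tau\}$ produces $V_1V_5$. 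After using $V_1^2V_2=V_1^2(V_2-V_1^2)+V_1^4$ to regroup, the diagonal part alone reproduces every term of the claimed covariance except the last, including the precise coefficients $4q/(pT^2)-10q/(pT^3)$ and $-4q/(pT^3)$.

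The remaining $V_7$ term is the genuinely new and trickiest piece, coming entirely from the off-diagonal part. Here the four vectors $\z_s,\z_t,\z_{t-\tau},\z_{s-\tau}$ each appear once, so a nonzero expectation after multiplying by $w_uw_v$ forces a coincidence among them; the only family that survives and contributes $O(T)$ index tuples is $t=s\pm\tau$, which merges two of the vectors into a double occurrence and leaves the configuration $\{\z_s(\times2),\z_{s+\tau},\z_{s-\tau}\}$ (after relabelling). Completing this by $\{u,v\}=\{s+\tau,s-\tau\}$ raises $\z_{s+\tau}$ and $\z_{s-\tau}$ to multiplicity three; since each triply occurring index contributes a third moment $\nu_3$, the resulting expectation is exactly $V_7=\nu_3^2\,\imath^*\lb D(\Sigma_0)\Sigma_0^2 D(\Sigma_0)\rb\imath$, and the two orderings of $(u,v)$ together with the two signs of $t=s\pm\tau$ give the count $4T$ and hence the coefficient $4q/(pT^3)$. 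The main obstacle is precisely this off-diagonal analysis: one must check that every other coincidence pattern either vanishes (leaving an index of odd multiplicity with no partner) or is of strictly smaller order, confirm that the circular convention creates no spurious boundary contributions, and carry the exact $T$-polynomials of both the diagonal and off-diagonal pieces to order $T^{-3}$ so that they assemble into the stated closed form.
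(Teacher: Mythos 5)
Your proposal is correct and follows essentially the same route as the paper: both expand $p\hat{s}_1^2$ and $Q_\tau$ in the quadratic forms $\z_t^*\Sigma_0\z_t$, enumerate the index-coincidence patterns with the same counts ($T$, $4T(T-1)$, $3T(T-1)$, $6T(T-1)(T-2)$, all-distinct) for the variance, and for the covariance your diagonal/off-diagonal split of $Q_\tau$ is exactly the paper's case $s_1=t_1$ versus $s_1=t_1\mp\tau$, with the four $O(T)$ tuples $t=s\pm\tau$, $\{u,v\}=\{s+\tau,s-\tau\}$ yielding the $\frac{4q}{pT^3}V_7$ term just as in the paper. No gaps; the bookkeeping you defer as ``mechanical'' is precisely what the paper carries out.
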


\begin{proof}
	Write
	\[p\hat{s}_1^2=\frac{1}{pT^2}\sum_{s,t=1}^T\x_t^*\x_t\x_s^*\x_s,\]
	since $\x_t=\Sigma_0^{1/2}\z_t$ with $\z_t's$ independent of each other,
	\begin{align*}
	\mathbb{E}\lb p\hat{s}_1^2\rb&=\frac{1}{pT^2}\sum_{t=1}^T\mathbb{E}\lb \x_t^*\x_t\x_t^*\x_t\rb+\frac{1}{pT^2}\sum_{t\neq s}\mathbb{E}(\x_t^*\x_t)\mathbb{E}(\x_s^*\x_s)\\
	&=\frac{1}{pT}V_2+\lb\frac{1}{p}-\frac{1}{pT}\rb V_1^2.
	\end{align*}
	To evaluate $\mathbb{E}(p^2\hat{s}_1^4)$, observe that
	\[\mathbb{E}(p^2\hat{s}_1^4)=\frac{1}{p^2T^4}\sum_{t_1,t_2,s_1,s_2=1}^T\mathbb{E}\lb \z_{t_1}^*\Sigma_0\z_{t_1}\z_{s_1}^*\Sigma_0\z_{s_1}\z_{t_2}^*\Sigma_0\z_{t_2}\z_{s_2}^*\Sigma_0\z_{s_2}\rb.\]
Denote $\mathbb{E}\lb \z_{t_1}^*\Sigma_0\z_{t_1}\z_{s_1}^*\Sigma_0\z_{s_1}\z_{t_2}^*\Sigma_0\z_{t_2}\z_{s_2}^*\Sigma_0\z_{s_2}\rb$ by $F(t_1,s_1,t_2,s_2)$,	we detail the cases where $F(t_1,s_1,t_2,s_2)$ is non-zero as follows.
\begin{itemize}
	\item[(I)] $t_1=s_1=t_2=s_2$:
		\[\frac{1}{p^2T^4}\sum_{t=1}^T F(t,t,t,t)=\frac{T}{p^2T^4}\E(\z_t^*\Sigma_0\z_t)^4=\frac{1}{p^2T^3}V_6;\]
	
	\item[(II)] $t_1=s_1=t_2\neq s_2$:
			\begin{align*}
		\frac{4}{p^2T^4}\sum_{t_1\neq t_2}^T F(t_1,t_1,t_1,t_2)&=\frac{4(T^2-T)}{p^2T^4}\E(\z_t^*\Sigma_0\z_t)\E(\z_t^*\Sigma_0\z_t)^3\\
		&=\lb \frac{4}{p^2T^2}-\frac{4}{p^2T^3}\rb V_1V_5;
		\end{align*}
		
				\item[(III)] $t_1=s_1\neq t_2= s_2$:
		\begin{align*}
		\frac{3}{p^2T^4}\sum_{t_1\neq t_2}^T F(t_1,t_1,t_2,t_2)&=\frac{3(T^2-T)}{p^2T^4}\E(\z_t^*\Sigma_0\z_t)^2\E(\z_t^*\Sigma_0\z_t)^2\\
		&=\lb \frac{3}{p^2T^2}-\frac{3}{p^2T^3}\rb V_2^2;
		\end{align*}

	\item[(IV)] $t_1=s_1\neq t_2\neq s_2$:
				\begin{align*}
	\frac{6}{p^2T^4}\sum_{t_1\neq t_2\neq s_2}^T F(t_1,t_1,t_2,s_2)&=\frac{6T(T-1)(T-2)}{p^2T^4}\E(\z_t^*\Sigma_0\z_t)^2\E(\z_t^*\Sigma_0\z_t)\E(\z_t^*\Sigma_0\z_t)\\
	&=\lb \frac{6}{p^2T}-\frac{18}{p^2T^2}+\frac{12}{p^2T^3}\rb V_2V_1^2;
		\end{align*}

\item[(V)] Otherwise, $t_1\neq s_1\neq t_2\neq s_2$:
	\begin{align*}
\frac{1}{p^2T^4}\sum_{t_1\neq s_1\neq t_2\neq s_2}^T F(t_1,s_1,t_2,s_2)&=\frac{T^4-6T(T-1)(T-2)-7T(T-1)-T}{p^2T^4}\lb\E(\z_t^*\Sigma_0\z_t)\rb^4\\
&=\lb \frac{1}{p^2}-\frac{6}{p^2T}+\frac{11}{p^2T^2}-\frac{6}{p^2T^3}\rb V_1^4.
\end{align*}
	
\end{itemize}

With the above,
	\begin{align*}
\mathbb{E}(p\hat{s}_1^2)&=\frac{V_1^2}{p}-\frac{1}{pT}\lb V_1^2-V_2\rb,\\
\var(p\hat{s}_1^2)&=\frac{1}{p^2T^3}V_6+\lb \frac{4}{p^2T^2}- \frac{4}{p^2T^3}\rb V_1V_5+\lb \frac{2}{p^2T^2}- \frac{3}{p^2T^3}\rb V_2^2\\
&+\lb \frac{4}{p^2T}- \frac{16}{p^2T^2}+\frac{12}{p^2T^3}\rb V_1^2V_2+\lb -\frac{4}{p^2T}+\frac{10}{p^2T^2}-\frac{6}{p^2T^3}\rb V_1^4.
\end{align*}

As for covariance between $G_q$ and $p\hat{s}_1^2$, since
 \[     G_q = \sum_{\tau=1}^q Q_\tau,~ Q_\tau = \frac{1}{T^2}\sum_{t,s=1}^T\E(\x_s^*\x_t\x_{t-\tau}^*\x_{s-\tau}),~ \E(Q_\tau)=V_1^2/T,
\] 
\[\cov\lb G_q, p\hat{s}_1^2\rb=q\lb \E(Q_\tau\cdot p\hat{s}_1^2)-\E(Q_\tau)\E(p\hat{s}_1^2)\rb, \]
we only need to consider
\[\E(Q_\tau\cdot p\hat{s}_1^2)=\frac{1}{pT^4}\sum_{t_1,t_2,s_1,s_2=1}^T \mathbb{E}\lb \z_{s_1}^*\Sigma_0\z_{t_1}\z_{t_1-\tau}^*\Sigma_0\z_{s_1-\tau}\z_{t_2}^*\Sigma_0\z_{t_2}\z_{s_2}^*\Sigma_0\z_{s_2}\rb. \]
In the following we detail the cases where the expectation on the right hand side above is non-zero.

\begin{itemize}
	\item[(I)] $s_1=t_1$:  $\mathbb{E}\lb \z_{s_1}^*\Sigma_0\z_{t_1}\z_{t_1-\tau}^*\Sigma_0\z_{s_1-\tau}\z_{t_2}^*\Sigma_0\z_{t_2}\z_{s_2}^*\Sigma_0\z_{s_2}\rb=F(t_1,t_1-\tau,t_2,s_2)$. Sub-cases:
	\begin{itemize}
		\item[(i)] $t_1=t_2=s_2$:		
		\[\frac{T}{pT^4} F(t,t-\tau,t,t)=\frac{1}{pT^3}V_1V_5;\]
		
			\item[(ii)] $t_1-\tau=t_2=s_2$:
		\[\frac{T}{pT^4} F(t,t-\tau,t-\tau,t-\tau)=\frac{1}{pT^3}V_1V_5;\]
			\item[(iii)] $t_1=t_2,~ t_1-\tau=s_2$:		
		\[\frac{T}{pT^4} F(t,t-\tau,t,t-\tau)=\frac{1}{pT^3}V_2^2;\]
		
		\item[(iv)] $t_1-\tau=t_2~, t_1=s_2$:
		\[\frac{T}{pT^4} F(t,t-\tau,t-\tau,t)=\frac{1}{pT^3}V_2^2;\]
		
		\item[(v)] $t_1=t_2,~ s_2\neq t_1,~ s_2\neq t_1-\tau$:
		\[\frac{T(T-2)}{pT^4} F(t_1,t_1-\tau,t_1,s_2)=\lb\frac{1}{pT^2}-\frac{2}{pT^3}\rb V_2V_1^2;\]
		
				\item[(vi)] $t_1=s_2,~ t_2\neq t_1,~ t_2\neq t_1-\tau$:
		\[\frac{T(T-2)}{pT^4} F(t_1,t_1-\tau,t_2,t_1)=\lb\frac{1}{pT^2}-\frac{2}{pT^3}\rb V_2V_1^2;\]
		
				\item[(vii)] $t_1-\tau=t_2,~ s_2\neq t_1-\tau,~ s_2\neq t_1$:
		\[\frac{T(T-2)}{pT^4} F(t_1,t_1-\tau,t_1-\tau,s_2)=\lb\frac{1}{pT^2}-\frac{2}{pT^3}\rb V_2V_1^2;\]
			
				\item[(viii)] $t_1-\tau=s_2,~ t_2\neq t_1,~ t_2\neq t_1-\tau$:
		\[\frac{T(T-2)}{pT^4} F(t_1,t_1-\tau,t_2,t_1-\tau)=\lb\frac{1}{pT^2}-\frac{2}{pT^3}\rb V_2V_1^2;\]
				
				\item[(ix)] $t_2=s_2,~ t_2\neq t_1,~ t_2\neq t_1-\tau$:
		\[\frac{T(T-2)}{pT^4} F(t_1,t_1-\tau,t_2,t_2)=\lb\frac{1}{pT^2}-\frac{2}{pT^3}\rb V_2V_1^2;\]
		
		\item[(x)] Otherwise, $t_1\neq t_1-\tau \neq t_2\neq s_2$:
		\[\frac{T^3-4T-5T(T-2)}{pT^4} F(t_1,t_1-\tau,t_2,s_2)=\lb\frac{1}{pT}-\frac{5}{pT^2}+\frac{6}{pT^3}\rb V_1^4;\]
		
	\end{itemize}

\item[(II)] $s_1\neq t_1, s_1=t_1-\tau$: $\quad \mathbb{E}\lb \z_{s_1}^*\Sigma_0\z_{s_1+\tau}\z_{s_1}^*\Sigma_0\z_{s_1-\tau}\z_{t_2}^*\Sigma_0\z_{t_2}\z_{s_2}^*\Sigma_0\z_{s_2}\rb$.

Subcases:

\begin{itemize}
	\item[(i)] $t_2=s_1-\tau, s_2=s_1+\tau$:
	\[\frac{T}{pT^4}\E\lb \z_{s_1}^*\Sigma_0\z_{s_1+\tau}\z_{s_1}^*\Sigma_0\z_{s_1-\tau}\z_{s_1-\tau}^*\Sigma_0\z_{s_1-\tau}\z_{s_1+\tau}^*\Sigma_0\z_{s_1+\tau}\rb=\frac{1}{pT^3}V_7; \]
		\item[(ii)] $s_2=s_1-\tau, t_2=s_1+\tau$:
	\[\frac{T}{pT^4}\E\lb \z_{s_1}^*\Sigma_0\z_{s_1+\tau}\z_{s_1}^*\Sigma_0\z_{s_1-\tau}\z_{s_1+\tau}^*\Sigma_0\z_{s_1+\tau}\z_{s_1-\tau}^*\Sigma_0\z_{s_1-\tau}\rb=\frac{1}{pT^3}V_7; \]
\end{itemize}

\item[(III)] $s_1\neq t_1, t_1=s_1-\tau$: $\quad \mathbb{E}\lb \z_{s_1}^*\Sigma_0\z_{s_1-\tau}\z_{s_1-2\tau}^*\Sigma_0\z_{s_1-\tau}\z_{t_2}^*\Sigma_0\z_{t_2}\z_{s_2}^*\Sigma_0\z_{s_2}\rb$.

Subcases:

\begin{itemize}
	\item[(i)] $t_2=s_1, s_2=s_1-2\tau$:
	\[\frac{T}{pT^4}\mathbb{E}\lb \z_{s_1}^*\Sigma_0\z_{s_1-\tau}\z_{s_1-2\tau}^*\Sigma_0\z_{s_1-\tau}\z_{s_1}^*\Sigma_0\z_{s_1}\z_{s_1-2\tau}^*\Sigma_0\z_{s_1-2\tau}\rb=\frac{1}{pT^3}V_7; \]
	\item[(ii)] $t_2=s_1-2\tau, s_2=s_1$:
	\[\frac{T}{pT^4}\mathbb{E}\lb \z_{s_1}^*\Sigma_0\z_{s_1-\tau}\z_{s_1-2\tau}^*\Sigma_0\z_{s_1-\tau}\z_{s_1-2\tau}^*\Sigma_0\z_{s_1-2\tau}\z_{s_1}^*\Sigma_0\z_{s_1}\rb=\frac{1}{pT^3}V_7; \]
\end{itemize}
	
\end{itemize} 

With all the above, we have, the covariance between $Q_\tau$ and $p\hat{s}_1^2$ is
\begin{align*}
\cov \lb Q_\tau,p\hat{s}_1^2\rb&=\lb \frac{4}{pT^2}- \frac{10}{pT^3}\rb V_1^2\lb V_2-V_1^2\rb-\frac{4}{pT^3}V_1^4+\frac{2}{pT^3}V_1V_5\\
&+\frac{2}{pT^3}V_2^2+\frac{4}{pT^3}V_7.
\end{align*}

This completes the proof of the lemma.

\end{proof}

\section{Proof of Proposition 4.2}

Considering $\Sigma_0$ is with bounded spectral norm, by implementing the results in Lemma~\ref{lem:3} and Lemma~\ref{lem:V5V6} to Lemma~\ref{lem:4} and  Lemma~\ref{lem:cov}, we can evaluate the order of each term and select terms of orders $O(1)$ and $O(\frac{1}{T})$. Therefore, the leading order terms of $\E(p\hat{s}_1^2)$, $\var(p\hat{s}_1^2)$, $\E(G_q)$, $\var(G_q)$ and $\cov(G_q, p\hat{s}_1^2)$ can be selected out accordingly. 	
As for $\E(\hat{s}_2)$,	
\begin{align*}
  \E(\hat{s}_2)&=\frac{1}{pT^2}\sum_{t_1,t_2=1}^T\E\lb \z_{t_1}\Sigma_0\z_{t_2}\z_{t_2}^*\Sigma_0\z_{t_1}\rb\\
               &=\frac{1}{pT^2}\sum_{t=1}^T\E \lb \z_t^*\Sigma_0\z_t\rb^2+\frac{1}{pT^2}\sum_{t\neq s}\E |\z_t^*\Sigma_0\z_s|^2\\
               &=\frac{1}{pT}V_2+\lb\frac{1}{p}-\frac{1}{pT}\rb V_3'\\
               &=\frac{1}{p}\Tr(\Sigma_0^2)+\frac{1}{pT}\Tr^2(\Sigma_0)+\frac{1}{pT}\lb \Tr(\Sigma_0^2)+(\nu_4-3)\Tr(D^2(\Sigma_0))\rb.
\end{align*}	
This completes the proof of Proposition~4.2.  

\clearpage

\section{Empirical  results related to the Hosking test  $\tilde Q_q$  and 
the Li-McLeod test $Q_q^*$}

The following three tables are discussed in 
Sections 3.2 and 3.3 of the main paper.
Table~\ref{Tab:cLPower} reports on empirical 
powers  of the Hosking test $\tilde Q_q$ and 
the Li-McLeod test $Q_q^*$.

\begin{table}[hb]
  \caption{Power performance of the Hosking test  $\widetilde{Q}_{q}$ and the Li-McLeod test $Q_{q}^{*}$.\label{Tab:cLPower}}
  \begin{tabular}{ccc|cc|cc}
\hline
 &  &   & \multicolumn{2}{c|}{$\widetilde{Q}_{q}$} & \multicolumn{2}{c}{$Q_q^*$}\tabularnewline
\cline{4-7}
$p$ & $T$ & $p/T$   & $q=1$ & $q=3$ & $q=1$ & $q=3$\tabularnewline
\hline
10 & 100 & 0.1  & 0.0952 & 0.0952 & 0.0914 & 0.0926\tabularnewline
20 & 200 & 0.1   & 0.2392 & 0.1994 & 0.2362 & 0.1958\tabularnewline
40 & 400 & 0.1  & 0.6638 & 0.5410 & 0.6622 & 0.5380\tabularnewline
60 & 600 & 0.1  & 0.9406 & 0.8452 & 0.9404 & 0.8448\tabularnewline
100 & 1000 & 0.1  & 1 & 0.9982 & 1 & 0.9982\tabularnewline
\hline
50 & 100 & 0.5  & 0.0014 & 0.0060 & 0.0014 & 0.0052\tabularnewline
100 & 200 & 0.5   & 0.0036 & 0.0208 & 0.0030 & 0.0194\tabularnewline
200 & 400 & 0.5  & 0.0330 & 0.2022 & 0.0328 & 0.1994\tabularnewline
300 & 600 & 0.5  & 0.1156 & 0.6348 & 0.1138 & 0.6306\tabularnewline
500 & 1000 & 0.5  & 0.5816 & 0.9974 & 0.5806 & 0.9972\tabularnewline
\hline
80 & 100 & 0.8   & 0 & 0 & 0 & 0\tabularnewline
160 & 200 & 0.8  & 0 & 0 & 0 & 0\tabularnewline
320 & 400 & 0.8  & 0 & 0 & 0 & 0\tabularnewline
480 & 600 & 0.8  & 0 & 0 & 0 & 0\tabularnewline
800 & 1000 & 0.8 &  0.0004 & 0.0038 & 0.0004 & 0.0032\tabularnewline
\hline
90 & 100 & 0.9  & 0 & 0 & 0 & 0\tabularnewline
180 & 200 & 0.9 &  0 & 0 & 0 & 0\tabularnewline
360 & 400 & 0.9 &  0 & 0 & 0 & 0\tabularnewline
540 & 600 & 0.9 &  0 & 0 & 0 & 0\tabularnewline
900 & 1000 & 0.9 & 0 & 0 & 0 & 0\tabularnewline
\hline
\end{tabular}
\end{table}

Table~\ref{Tab:cLAdPower} reports adjusted powers of the Hosking test $\widetilde{Q}_{q}$ and the
Li-McLeod test $Q_{q}^{*}$
in comparison to 
powers of our tests $G_q$ and $G_{q,1}$ when $\Sigma_0={\bf I}_p$.


\begin{table}[!h]
\caption{Adjusted powers of the Hosking test $\widetilde{Q}_{q}$ and the
  Li-McLeod test $Q_{q}^{*}$
  as compared to
  powers of our tests $G_q$ and $G_{q,1}$ when $\Sigma_0={\bf I}_p$}\label{Tab:cLAdPower}

\resizebox{0.98\textwidth}{!}{
	\def\arraystretch{1.1}
\begin{tabular}{cccc|cc|cc|cc|cc}
	\hline 
	&  &  &  & \multicolumn{2}{c|}{$G_{q}$} & \multicolumn{2}{c|}{$G_{q,1}$} & \multicolumn{2}{c|}{$\widetilde{Q}_{q}$} & \multicolumn{2}{c}{$Q_{q}^{*}$}\tabularnewline
	\cline{5-12} 
	$p$ & $T$ & $p/T$ & $a$ & $q=1$ & $q=3$ & $q=1$ & $q=3$ & $q=1$ & $q=3$ & $q=1$ & $q=3$\tabularnewline
	\hline 
	5 & 1000 & 0.005 & 0.05 & 0.4865 & 0.2820 & 0.4900 & 0.2820 & 0.4290 & 0.2535 & 0.4290 & 0.2540\tabularnewline
	10 & 2000 & 0.005 & 0.05 & 0.9235 & 0.6080 & 0.9305 & 0.6355 & 0.9010 & 0.6495 & 0.9010 & 0.6500\tabularnewline
	25 & 5000 & 0.005 & 0.05 & 1 & 0.9985 & 1 & 0.9995 & 1 & 0.9985 & 1 & 0.9985\tabularnewline
	40 & 8000 & 0.005 & 0.05 & 1 & 1 & 1 & 1 & 1 & 1 & 1 & 1\tabularnewline
	\hline
	10 & 1000 & 0.01 & 0.05 & 0.4965 & 0.2650 & 0.5145 & 0.2850 & 0.4715 & 0.2450 & 0.4715 & 0.2450\tabularnewline
	20 & 2000 & 0.01 & 0.05 & 0.9310 & 0.6425 & 0.9375 & 0.6855 & 0.9315 & 0.6530 & 0.9315 & 0.6530\tabularnewline
	50 & 5000 & 0.01 & 0.05 & 1 & 0.9995 & 1 & 1 & 1 & 0.9990 & 1 & 0.9990\tabularnewline
	80 & 8000 & 0.01 & 0.05 & 1 & 1 & 1 & 1 & 1 & 1 & 1 & 1\tabularnewline
	\hline
	50 & 1000 & 0.05 & 0.05 & 0.5360 & 0.2825 & 0.6010 & 0.3800 & 0.4795 & 0.2910 & 0.4795 & 0.2905\tabularnewline
	100 & 2000 & 0.05 & 0.05 & 0.9670 & 0.6780 & 0.9865 & 0.8365 & 0.9470 & 0.7450 & 0.9470 & 0.7460\tabularnewline
	250 & 5000 & 0.05 & 0.05 & 1 & 1 & 1 & 1 & 1 & 1 & 1 & 1\tabularnewline
	400 & 8000 & 0.05 & 0.05 & 1 & 1 & 1 & 1 & 1 & 1 & 1 & 1\tabularnewline
	\hline
	10 & 100 & 0.1 & 0.1 & 0.1960 & 0.1360 & 0.2235 & 0.1695 & 0.1392 & 0.1218 & 0.1392 & 0.1214\tabularnewline
	40 & 400 & 0.1 & 0.1 & 0.8435 & 0.5380 & 0.9355 & 0.8050 & 0.7082 & 0.5968 & 0.7082 & 0.5988\tabularnewline
	60 & 600 & 0.1 & 0.1 & 0.9905 & 0.8140 & 0.9990 & 0.9830 & 0.9598 & 0.8808 & 0.9598 & 0.8816\tabularnewline
	100 & 1000 & 0.1 & 0.1 & 1 & 0.9940 & 1 & 1 & 1 & 0.9992 & 1 & 0.9992\tabularnewline
	\hline
	50 & 100 & 0.5 & 0.1 & 0.2110 & 0.1610 & 0.3810 & 0.4660 & 0.1004 & 0.1376 & 0.1004 & 0.1380\tabularnewline
	200 & 400 & 0.5 & 0.1 & 0.9245 & 0.7665 & 0.9990 & 1 & 0.4012 & 0.7708 & 0.4012 & 0.7708\tabularnewline
	300 & 600 & 0.5 & 0.1 & 0.9975 & 0.9725 & 1 & 1 & 0.6626 & 0.9746 & 0.6626 & 0.9748\tabularnewline
	500 & 1000 & 0.5 & 0.1 & 1 & 1 & 1 & 1 & 0.9666 & 1 & 0.9666 & 1\tabularnewline
	\hline
	90 & 100 & 0.9 & 0.1 & 0.2505 & 0.1920 & 0.5970 & 0.7620 & 0.1384 & 0.0992 & 0.1384 & 0.1002\tabularnewline
	360 & 400 & 0.9 & 0.1 & 0.9705 & 0.9080 & 1 & 1 & 0.7138 & 0.5172 & 0.7138 & 0.5176\tabularnewline
	540 & 600 & 0.9 & 0.1 & 1 & 1 & 1 & 1 & 0.9496 & 0.8368 & 0.9496 & 0.8368\tabularnewline
	900 & 1000 & 0.9 & 0.1 & 1 & 1 & 1 & 1 & 0.9998 & 0.9966 & 0.9998 & 0.9966\tabularnewline
	\hline
\end{tabular}}
\end{table}

Table~\ref{Tab:HoskingLitau3} reports on relative errors for the mean, variance and 95 percentile for the 
Hosking test statistic   $\widetilde{Q}_{q}$ and the Li-McLeod test statistic
$Q_q^*$ (with $q=3$).

\begin{table}[!h]
\caption{Relative errors for the mean, variance and 95 percentile for the 
  Hosking test statistic   $\widetilde{Q}_{q}$ and the Li-McLeod test statistic
  $Q_q^*$ (with $q=3$)}\label{Tab:HoskingLitau3}
\resizebox{0.98\textwidth}{!}{
	\def\arraystretch{1.1}
\begin{tabular}{ccc|ccc|ccc}
\hline
\multicolumn{3}{c|}{ } & \multicolumn{3}{c|}{$\widetilde{Q}_{q}$} & \multicolumn{3}{c}{$Q_{q}^{*}$}\tabularnewline
\cline{4-9}
$p$ & $T$ & $p/T$ &\bf Mean & \bf Variance  & \bf95\%Quantile &\bf Mean & \bf Variance  &\bf95\%Quantile\tabularnewline
\hline
10 & 100 & 0.1 & 0.234\% & 19.976\% & 1.366\% & 0.234\% & 24.922\% & 1.547\%\tabularnewline
20 & 200 & 0.1 & 0.067\% & 30.862\% & 0.993\% & 0.067\% & 33.526\% & 1.049\%\tabularnewline
40 & 400 & 0.1 & -0.015\% & 22.057\% & 0.253\% & -0.015\% & 23.286\% & 0.265\%\tabularnewline
60 & 600 & 0.1 & 0.000\% & 21.457\% & 0.162\% & 0.000\% & 22.269\% & 0.162\%\tabularnewline
100 & 1000 & 0.1 & 0.007\% & 20.666\% & 0.125\% & 0.007\% & 21.153\% & 0.125\%\tabularnewline
\hline
50 & 100 & 0.5 & 0.041\% & 267.179\% & 1.322\% & 0.041\% & 282.546\% & 1.354\%\tabularnewline
100 & 200 & 0.5 & 0.007\% & 284.025\% & 0.655\% & 0.007\% & 291.875\% & 0.662\%\tabularnewline
200 & 400 & 0.5 & 0.000\% & 289.080\% & 0.330\% & 0.000\% & 292.998\% & 0.330\%\tabularnewline
300 & 600 & 0.5 & 0.000\% & 297.059\% & 0.222\% & 0.000\% & 299.734\% & 0.222\%\tabularnewline
500 & 1000 & 0.5 & 0.000\% & 296.364\% & 0.134\% & 0.000\% & 297.941\% & 0.134\%\tabularnewline
\hline
80 & 100 & 0.8 & 0.010\% & 1742.257\% & 1.289\% & 0.005\% & 1820.096\% & 1.300\%\tabularnewline
160 & 200 & 0.8 & 0.000\% & 2020.024\% & 0.655\% & 0.000\% & 2063.959\% & 0.657\%\tabularnewline
320 & 400 & 0.8 & 0.000\% & 2214.386\% & 0.332\% & 0.000\% & 2237.811\% & 0.332\%\tabularnewline
480 & 600 & 0.8 & 0.001\% & 2266.151\% & 0.223\% & 0.001\% & 2282.093\% & 0.223\%\tabularnewline
800 & 1000 & 0.8 & 0.000\% & 2348.823\% & 0.137\% & 0.000\% & 2358.701\% & 0.137\%\tabularnewline
\hline
90 & 100 & 0.9 & 0.004\% & 5382.234\% & 1.292\% & 0.000\% & 5618.993\% & 1.297\%\tabularnewline
180 & 200 & 0.9 & 0.000\% & 6906.920\% & 0.657\% & 0.000\% & 7053.897\% & 0.658\%\tabularnewline
360 & 400 & 0.9 & 0.000\% & 8110.500\% & 0.332\% & 0.000\% & 8195.108\% & 0.332\%\tabularnewline
540 & 600 & 0.9 & 0.000\% & 8705.234\% & 0.222\% & 0.000\% & 8764.569\% & 0.222\%\tabularnewline
900 & 1000 & 0.9 & 0.000\% & 9170.563\% & 0.133\% & 0.000\% & 9208.205\% & 0.133\%\tabularnewline
\hline
\end{tabular}}
\end{table}



\begin{thebibliography}{99}
\bibitem[{Bai and Ng(2002)}]{BaiNg_Econometrica_2002}
  \textsc{Bai, J. and Ng, S.} (2002). Determining the number of factors in approximate factor models. \textsl{Econometrica}, {\bf 70}, 191--221.


\bibitem[Bai et al.(2010)]{Bai10}
  \textsc{Bai, Z.,  Chen, J., and  Yao, J.} (2010).
  On estimation of the population spectral distribution from a
  high-dimensional sample covariance matrix.     {\em Australian \&
    New Zeland Journal of Statistics} {\bf 52}, 423-437 

\bibitem[Bai and Silverstein(2004)]{Bai04}
\textsc{Bai, Z., and  Silverstein, J.W.}(2004). CLT for linear
spectral statistics of large-dimensional sample covariance
matrices. {\em Annals of Probability} {\bf 32(1A)},  553-605.

\bibitem[Bai et al.(2009)]{Bai09}
\textsc{Bai, Z., Jiang, D.,  Yao, J. and Zheng, S.}(2009). Corrections to
LRT on Large Dimensional Covariance Matrix by RMT. {\em Annals of
Statistics}, {\bf 37}(6B), 3822-3840

\bibitem[Bai and Wang(2015)]{Bai15}
\textsc{Bai, Z., and Wang, C.}(2015). A note on the limiting spectral distribution of a symmetrized auto-cross covariance matrix.  {\em Statistics \& Probability Letters}, {\bf 96}, 333-340.


\bibitem[{Basu and Michailidis(2015)}]{BM2015}
\textsc{Basu, S. and Michailidis, G. } (2015).
Regularized estimation in sparse high-dimensional time series models. {\em The Annals of Statistics,} {\bf 43(4)}, 1535-1567.


\bibitem[{Bickel and Gel(2011)}] {bickel2011}
\textsc{Bickel, P.~J. and Gel, Y.~R.} (2011).
\newblock{Banded regularization of autocovariance matrices
in application to parameter estimation and forecasting of time series.}
\newblock{{\em Journal of the  Royal Statistical  Society B}, {\bf 73}, 549-92.}

\bibitem[Bhattacharjee and Bose(2016)]{BB16}
\textsc{Bhattacharjee, M. and Bose, A.} (2016). Large sample behaviour of high dimensional autocovariance matrices. {\em The Annals of Statistics}, {\bf 44(2)}, pp.598-628.


\bibitem[{Chang {\sl et al.}(2015)}]{ChangGuoYao_2015}
Chang, J., Guo, B. and Yao, Q. (2015). High dimensional stochastic
regression with latent factors, endogeneity and nonlinearity. {\em J. Econometrics},
{\bf 189}, 297-312.

\bibitem[Chang, Yao and Zhou(2017)]{CYZ17}
\textsc{Chang, J., Yao, Q. and  Zhou, W.} (2017). Testing for high-dimensional white noise using maximum cross-correlations. {\em Biometrika}, {\bf 104(1)}, 111-127.


\bibitem[{Forni {\sl et al.}(2000)}]{Fornietal_2000}
\textsc{Forni, M., Hallin, M., Lippi, M., and  Reichlin, L.} (2000). The generalized dynamic-factor model: Identification and estimation. {\em Review of Economics and statistics}, {\bf 82(4)}, 540-554.

\bibitem[{Forni {\sl et al.}(2005)}]{Fornietal_2005}
\textsc{Forni, M., Hallin, M., Lippi, M., and Reichlin, L.} (2005). The
generalized dynamic factor model: One-sided estimation and
forecasting, {\em Journal of the  American  Statistical  Association}, {\bf 100}, 830--840.

\bibitem[Guo et al.(2016)]{GuoWangYao16}
\textsc{Guo, S., Wang, Y. \& Yao, Q.}(2016).
High dimensional and banded vector autoregressions. {\em Biometrika},
{\bf 103}, 889-903.

\bibitem[Gray(2006)]{Gray06}
\textsc{Gray, R.M. }(2006). Toeplitz and Circulant Matrices: A
Review.
{\em Now Publishers Inc.}


\bibitem[Grenander and Szeg\"{o}(1958)]{Grenander58}
\textsc{Grenander, U., Szeg\"{o}, G.}(1958). Toeplitz Forms and Their Applications. In: California Monographs in Mathematical Sciences. {\em University of California Press}, Berkeley.



\bibitem[{Han and Liu(2015)}]{HL2013}
\textsc{Han, F. and Liu, H.}(2015). A direct estimation of high dimensional stationary vector autoregressions. {\em Journal of Machine Learning Research}, {\bf 16}, 3115-3150.

\bibitem[{Haufe et al.(2009)}]{HNMK2009}
\textsc{Haufe, S., Nolte, G., Mueller, K. ~R., and Kr$\ddot{a}$mer, N.} (2009).
Sparse causal discovery in multivariate time series. {\em In Proceedings of the 2008th International Conference on Causality: Objectives and Assessment}, Volume 6 (pp. 97-106). JMLR.org.

\bibitem[Hosking(1980)]{Hosking80}
\textsc{Hosking, J. R. }(1980). The multivariate portmanteau
statistic. {\em Journal of the American Statistical Association},  {\bf 75}(371), 602-608.

\bibitem[{Hsu et al.(2008)}] {HHC2008}
\textsc{Hsu, N. J., Hung, H. L., and Chang, Y. M.} (2008).
\newblock{Subset selection for vector autoregressive processes using lasso. }
\newblock{{\it Computational  Statistics and Data  Analysis},  {\bf 52}, 3645-3657.}

\bibitem[Jing et al.(2014)]{Jin14}
\textsc{Jin, B., Wang, C., Bai, Z. D., Nair, K. K., Harding, M.}(2014). Limiting spectral distribution of a symmetrized auto-cross covariance matrix. {\em The Annals of Applied Probability},  {\bf 24(3)}, 1199-1225.

\bibitem[Johnstone (2007)]{John07}
  \textsc{Johnstone, I. M.} (2007).
  High dimensional statistical inference and random matrices.
  \textit{International  Congress of  Mathematicians}, \textbf{I}, 307-333.
  Z$\ddot{u}$rich, Switzerland: European Mathematical
  Society.

\bibitem[Lam, C.(2016)]{Lam15}
\textsc{Lam, C. }(2016). Nonparametric Eigenvalue-Regularized Precision or Covariance Matrix Estimator. {\em The Annals of Statistics}, {\bf 44}(3), 928-953.


\bibitem[{Lam and Yao(2012)}]{LamYao_AOS_2012}
Lam, C. and Yao, Q. (2012). Factor modeling for high-dimensional time
series: inference for the number of factors. {\em Annals of Statistics}, {\bf 40}, 694--726.


\bibitem[Li(2004)]{Li03}
\textsc{Li, W. K. }(2004). {\em Diagnostic Checks in Time Series.}
{Chapman \& Hall/CRC}.

\bibitem[Li et al.(2018)]{supp}
  \textsc{Li, Z., Lam, C., Yao, J., Yao, Q.}(2018).
  Supplement to ``On testing for high-dimensional white noise''. DOI:
  xxxxx.

\bibitem[Li and McLeod(1981)]{Li81}
\textsc{Li, W. K., \& McLeod, A. I. }(1981). Distribution of the
residual autocorrelations in multivariate ARMA time series
models. {\em Journal of the Royal Statistical Society. Series B}, {\bf 43},  231-239.

\bibitem[Li and Yao(2016)]{LiYao15}
\textsc{Li, Z., and Yao, J. }(2016). Testing the Sphericity of a covariance matrix when the dimension is much larger than the sample size. {\em Electronic Journal of Statistics}, {\bf 10(2)}, 2973-3010.

\bibitem[Liu et al. (2015)]{LiuAuePaul15}
  \textsc{Liu, H.,  Aue, A. and  Paul, D.} (2015).
  On the Marcenko-Pastur law for linear time series.
  {\em The Annals of Statistics}, {\bf 43(2)},  675-712.


\bibitem[{L\"utkepohl(2005)}]{Lutkepohl_2005}
\textsc{L\"utkepohl, H.} (2005). {\sl New Introduction to Multiple Time Series Analysis}. Springer, Berlin.

\bibitem[Paul and Aue (2014)]{PaulAue14}
\textsc{Paul, D. and Aue, A.} (2014). Random matrix theory in statistics: A review.
{\em Journal of  Statistical Planning and Inference}, {\bf 150}, 1-29.

\bibitem[Sarkar and Chang(1997)]{Sarkar97}
\textsc{Sarkar, S. K. and Chang, C. K.} (1997). The Simes method for multiple hypothesis testing with positively dependent test statistics. {\em Journal of the American Statistical Association}, {\bf 92(440)}, 1601-1608.

\bibitem[{Shojaie and Michailidis(2010)}]{SM2010}
\textsc{Shojaie, A. and Michailidis, G.} (2010).
\newblock{Discovering graphical Granger causality using the truncating LASSO penalty.}
\newblock{{\it Bioinformatics}, {\bf 26}, 517-523.}


\bibitem[Simes(1986)]{Simes86}
\textsc{Simes, R. J. }(1986). An improved Bonferroni procedure for multiple tests of significance. {\em Biometrika}, {\bf 73}(3), 751-754.

\bibitem[Srivastava(2005)]{Srivastava05}
\textsc{M.S. Srivastava}(2005). Some tests concerning the covariance matrix in high-dimensional data, {\em Journal of The Japan Statistical Society}, {\bf 35},251-272.

\bibitem[Stock and Watson(1989)]{SW89}
\textsc{Stock, J. H., Watson, M. W. }(1989). New indexes of coincident and leading economic indicators.{\em NBER macroeconomics annual}, {\bf 4}, 351-394.

\bibitem[Stock and Watson(1998)]{SW98}
\textsc{Stock, J. H., Watson, M. W. }(1998). Diffusion indexes (No. w6702). {\em National bureau of economic research}.

\bibitem[Stock and Watson(1999)]{SW99}
\textsc{Stock, J. H., Watson, M. W. }(1999). Forecasting inflation. {\em Journal of Monetary Economics}, {\bf 44}(2), 293-335.

\bibitem[Tsay(2017)]{Tsay17}
\textsc{Tsay, R.} (2017). Testing for serial correlations in high-dimensional
time series via extreme value theory. {\sl A preprint}.


\bibitem[Wang and Yao(2013)]{WY13}
\textsc{Wang, Q.  and Yao, J.}(2013). On the sphericity test with large-dimensional observations.
{\em Electronic Journalof  Statistics} {\bf 7},2164-2192.


\bibitem[Yao et al. (2015)]{YZB15a}
\textsc{Yao, J., Zheng, S. and Bai. Z.} (2015).
{\em Large Sample Covariance Matrices and High-Dimensional Data Analysis}.
Cambridge University Press, New York and London.

\bibitem[Zheng (2012)]{Zheng12}
\textsc{Zheng, S. }(2012). 
Central limit theorems for linear spectral statistics of large dimensional $F$-matrices.
{\em
  Annales de l'Institut Henri Poincar\'e - Probabilit\'es et Statistiques},
{\bf 48}(2), 444-476

Substitution principle for CLT of linear spectral statistics of high-dimensional sample covariance matrices with applications to hypothesis testing. {\em The Annals of Statistics}, {\bf 43}(2), 546-591.


\bibitem[Zheng et al.(2015)]{Zheng15}
\textsc{Zheng, S., Bai, Z., and Yao, J. }(2015). Substitution principle for CLT of linear spectral statistics of high-dimensional sample covariance matrices with applications to hypothesis testing. {\em The Annals of Statistics}, {\bf 43}(2), 546-591.


\end{thebibliography}

\begin{thebibliography}{99}
\bibitem[Bao and Ullah(2010)]{BU10}
  \textsc{Bao, Y., and Ullah, A. }(2010). Expectation of quadratic forms in normal and nonnormal variables with applications. {\em Journal of Statistical Planning and Inference}, 140(5), 1193-1205.

\bibitem[Ullah(2004)]{U04}
\textsc{Ullah, A. }(2004). Finite sample econometrics. New York: Oxford University Press.

  
\end{thebibliography}
\end{document}